\newtheorem{theo}{Theorem}[section]
\newtheorem{coro}[theo]{Corollary}
\newtheorem{lemm}[theo]{Lemma}
\newtheorem{prop}[theo]{Proposition}
\newtheorem{rema}[theo]{Remark}
\newtheorem{defi}[theo]{Definition}
\numberwithin{equation}{section}
\begin{document}

\title[Periodic Solutions of  Nonlinear Beam Equations ]{ The  existence of
 Periodic Solutions  for   Nonlinear Beam Equations on $\mathbb T^d$  by a Para-differential Method }

\author{Bochao Chen}
\address{School of
Mathematics and Statistics, Center for Mathematics and
Interdisciplinary Sciences, Northeast Normal University, Changchun, Jilin 130024, P.R.China}
\email{chenbc758@nenu.edu.cn}

\author{Yong Li}
\address{School of
Mathematics and Statistics, Center for Mathematics and
Interdisciplinary Sciences, Northeast Normal University, Changchun, Jilin 130024, P.R.China}
\email{yongli@nenu.edu.cn}

\author{Yixian Gao}
\address{School of
Mathematics and Statistics, Center for Mathematics and
Interdisciplinary Sciences, Northeast Normal University, Changchun, Jilin 130024, P.R.China}
\email{gaoyx643@nenu.edu.cn}

\thanks{The research  of YL was supported in part by NSFC grant 11571065, 11171132  and  National Research Program of China Grant 2013CB834100, The research of YG was supported in part by NSFC grant
11671071, JLSTDP 20160520094JH  and the Fundamental Research Funds for the Central Universities 2412017FZ005.}

\keywords{Beam equations; Periodic solutions; Para-differential conjugation; Iteration scheme.}

\begin{abstract}
This paper focuses on the  construction of periodic solutions of
 nonlinear beam equations on the $d$-dimensional tori.
 For a large set of frequencies, we demonstrate that  an equivalent
form of the nonlinear equations can be obtained  by a para-differential conjugation.  Given the non-resonant  conditions on each finite dimensional subspaces, it is shown that the periodic
solutions can be constructed for the block diagonal
equation  by a classical iteration scheme.

\end{abstract}

\maketitle

\section{Introduction}
\setcounter{equation}{0}
In this paper, we study  the existence of the time-periodic solutions for the following nonlinear beam equations
\begin{equation}\label{E1.1.1}
(\partial_{tt}+\Delta ^2+m)u=\epsilon \frac{\partial F}{\partial {u}}(\omega t, x, u, \epsilon)+\epsilon f(\omega t, x), ~t\in \mathbb {R},~x\in\mathbb{T}^d=(\mathbb{R}/2\pi\mathbb{Z})^d,
\end{equation}
where $\omega>0$, $m>0$, $\epsilon\in[0,1]$,  $f$ is $
{2\pi}$-periodic in $t$  and smooth function on $\mathbb{R}\times\mathbb{T}^d$ with value in $\mathbb{R}$.
 The nonlinearity term  $F$ is also $2\pi$-periodic in time and satisfies
\begin{equation}\label{E1.2}
F(t,x, z,\epsilon) \in C^{\infty}(\mathbb{R}\times \mathbb{T}^d\times \mathbb{R}\times [0,1];\mathbb{R}), \quad
\partial^k_{z}F(t, x, 0, \epsilon)\equiv0 \quad \mathrm{ for}~ k\leq2.
\end{equation}
By means of a para-differential method together with a classical iteration scheme,
which   introduced  by Delort in \cite{Delort2011}, we try to show that there exists a small $\omega$-measure set $\mathcal O \subset [1,2]\times (0,1]$, such that for $(\omega,\epsilon)\notin\mathcal O$ with $\epsilon $ small enough,  Eq.\eqref{E1.1.1} admits a family of time-periodic solutions.

The search for periodic solutions of nonlinear PDEs has a long standing tradition. The problem has received high attention thanks to the pioneering work of Rabinowitz \cite{Rabinowitz1967periodic,rabinowitz1968periodic}.
He rephrased the problem as a variational
problem and proved the existence of periodic solutions whenever the time period $T$ is a rational
multiple of the length of  spatial interval, and the nonlinearity $f$ is  monotonic in $u$. Subsequently many related results have been obtained by Bahri, Br\'{e}zis, Corn, Nirenberg etc., see \cite{bahri1980periodic,brezis1983periodic,brezis1981periodic,brezis1978forced}, while some  recent papers can be found in \cite{ambrose2010computation,colliander2010transfer,ji2011time}. Among  most of these results,
period $T$ was required to be a rational multiple of $\pi$ (length of spatial interval). Otherwise, it  results in the ``small divisor" problem. For example,  the spectrum of the wave operator $\omega\partial_{tt}-\partial_{xx}$ approaches to zero for almost every $\omega\in\mathbb{R}\setminus \mathbb{Q}$.
In the later of 1980's, a  approach
via the KAM  method  was developed from the
viewpoint of infinite-dimensional Hamiltonian partial differential equations by Kuksin \cite{kuksin1987hamiltonian}, Eliasson\cite{eliasson1988perturbations} and Wayne
\cite{wayne1990periodic}. This method allowed one to obtain solutions whose periods are irrational multiples
of the length of the spatial interval, and it is also easily extended to construct
quasi-periodic solutions, see \cite{baldi2014kam,berti2014kam,chierchia2000kam,eliasson2010kam,gao2009invariant,geng2006kam,kuksin1556nearly}.
Later, in
\cite{bourgain1994construction,bourgain1998quasi,Bourgain1999,bourgain2000diffusion,bourgain2005green,craig1993newton} Craig,
Wayne and Bourgain retrieved the  Nash-Moser iteration method together with the Lyapunov-Schmidt
reduction which involves the Green's function analysis and the
control of the inverse of infinite matrices with  small eigenvalues,
successfully constructed the periodic and quasi-periodic solutions
of partial differential equations with Dirichlet boundary conditions
or periodic boundary conditions.  Some recent results about  Nash-Moser theorems can be found in \cite{baldi2008forced,baldi2013periodic,Berti2013Quasi} and the reference there in.
Finding periodic and quasi-periodic solutions for PDEs in higher space dimensions
 is much harder than in the one-dimensional case, mainly due to the high
degeneracy of the frequencies of the linearized equation.
For the  high dimensional beam equations with the  real-analytic nonlinearities  depending on the space
variable, in \cite{gentile2009periodic} Gentile and Procesi proved the existence of Gevrey smooth periodic solutions. Their approach is
 based on a standard Lyapunov-Schmidt decomposition,
which separates the original PDEs into two equations, traditionally called the $P$ and
$Q$ equations - combined with renormalized
 expansions {\it \.{a} la} Lindstedt to handle the
small divisor problem. In \cite{gengyou2006kam,geng2006kam}, in high dimension, for a class of PDEs with
periodic boundary conditions and with nonlocal smooth nonlinearities,
 Geng and You gave the existence of quasi-periodic solution by establishing an infinite dimensional KAM
theorem.
 Especially, the nonlinear beam equations have received much
attention by the mathematical communities, it mainly focuses on the KAM method \cite{Xu2009,Liang2006,Geng2006,Chang2015,WangandSi2012}  and Nash-Moser iteration method \cite{Shi2016,chen2017quasi}.
In addition to the above two method,
para-differential approach is also an useful tool to construct the periodic solutions of nonlinear PDEs, which  only makes use of ``symbolic calculus'' properties.
We refer to \cite{Delort2011} and \cite{chenconstruction}
for the Sch\"{o}rdinger equation and the wave equation respectively.  The properties of the operator in this paper is different from the  ones  in \cite{Delort2011,chenconstruction}
and more  difficulties appears in diagonalization of the equation.

In the present paper, we employ the para-differential approach instead of
 Nash-Moser theorems and KAM methods.
   In a Nash-Moser iteration scheme, ones have to consider the treatment of losses of derivative coming from small divisors and the convergence of the sequence of approximations at the same time. However, using para-differential  approach,
  such losses of derivative coming from small divisors will be compensated by the smoothing properties of the operator in the right hand side of the equation, as a result we don't worry about the convergence of the sequence of approximations of the solution when we treat small divisors. Furthermore, the regularity of the nonlinearity does not need to be analytic, and can depend on  space and time variables.
Since the $L^2(\mathbb{T}^d)$ can be decomposed to  the direct sum of subspace ${\rm Range}(\widetilde{\Pi}_\alpha)$ for all $\alpha\in\mathcal{A}$ defined by \eqref{E2.2.9}, one  advantage  in this article is that we solve the equation on ${\rm Range}(\widetilde{\Pi}_\alpha)$ which is a finite dimensional subspace. We just have to give a non-resonant condition on every ${\rm Range}(\widetilde{\Pi}_\alpha)$, which is different with the non-resonant condition given by the KAM method or  Nash-Moser iteration method.


\subsection{Main results}\label{subsec:2.2}
Denote by $\mathcal{D}'(\mathbb{T}\times\mathbb{T}^d)$ the space of generalized functions on
$\mathbb{T}\times\mathbb{T}^d$. To fix ideas, we shall take $\omega$ inside a fixed compact sub-interval of $(0,\infty)$, such as $\omega\in [1,2]$ (in fact
any compact interval $[a, b] \subset (0, \infty)$ is also true). After a time rescaling $t\rightarrow\frac{t}{\omega}$, we prove the existence of $2\pi$-periodic solutions in time of
\begin{equation}\label{E2.1.2}
(\omega^2\partial_{tt}+\Delta^2+m)u=\epsilon\frac{\partial F}{\partial {u}}( t,x,u,\epsilon)+\epsilon f(t,x).
\end{equation}
    Define the Sobolev space $\tilde{\mathcal{H}}^\sigma$ with $\sigma\in\mathbb{R}$ as follows:
\begin{equation}\label{E2.1.3}
\tilde{\mathcal{H}}^\sigma:=\tilde{\mathcal{H}}^\sigma(\mathbb{T}\times\mathbb{T}^d; \mathbb{R}):=\left\{u\in\mathcal{D}'(\mathbb{T}\times\mathbb{T}^d);~\|u\|^2_{\tilde{\mathcal{H}}^\sigma}<+\infty~ \text{ with}~\bar{{u}}_{j,n}={u}_{-j,-n}\right\},
\end{equation}
where, $\forall j\in\mathbb{Z}$, $\forall n\in(n_1,\cdots,n_d) \in \mathbb {Z}^d$
\begin{align}
\|u\|^2_{\tilde{\mathcal{H}}^\sigma}
&:=\sum\limits_{j\in\mathbb{Z}}
\sum\limits_{n\in\mathbb{Z}^d}
(1+j^2+|n|^4)^\sigma|\hat{u}(j,n)|^2,\label{E1.4}\\
\hat{u}(j,n)&=\frac{1}{(2\pi)^{\frac{d+1}{2}}}\int_{\mathbb{T}\times\mathbb{T}^d}e^{-\mathrm{i}j t-\mathrm{i}n\cdot x}u(t,x)~\mathrm{d}t\mathrm{d}x,\quad
|n|=\sqrt {n_1^2+\cdots+n_d^2}.\nonumber
\end{align}
 In \cite{Bourgain1999}, Bourgain gave the geometric properties of the spectrum of operator $-\Delta$ on $\mathbb{T}^d$ (The proof see $\mathrm{Lemma}~19.10$ in \cite{bourgain2005green}).
 \begin{lemm}{\bf( Bourgain)}\label{lemma1.1}
Set $\mathbb{N}^{+}:=\{d\in\mathbb{N},d>0\}$. For all $0<\beta<\frac{1}{10}$, there exist $\theta>0$ and a partition
\begin{equation*}
\mathbb{Z}^d=\bigcup\limits_{\alpha\in\mathcal{A}}\Omega_\alpha \quad\mathrm{with}\quad d\in\mathbb{N}^{+}
\end{equation*}
such that, for all $\alpha\in\mathcal{A}$, the following properties holds:
\begin{align}
&\forall~n,n'\in\Omega_\alpha,
|n-n'|+\left||n|^2-|n'|^2\right|<\theta+|n|^\beta,\label{E2.2.2}\\
\forall~n\in&\Omega_{\alpha},\forall~n'\in\Omega_{\alpha'}, ~\mathrm{with}~\alpha\neq\alpha',~|n-n'|+\left||n|^2-|n'|^2\right|>|n|^\rho,\label{E2.4}
\end{align}
where $0<\rho=\rho(\beta,d)<\beta$.
\end{lemm}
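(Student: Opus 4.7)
The natural strategy is the standard clustering construction: introduce an elementary proximity relation on $\mathbb Z^d$, pass to its transitive closure, and check that the resulting equivalence classes have the desired metric properties. Fix a small exponent $0<\rho<\beta$ (to be tuned) and declare $n\leftrightarrow n'$ iff
\begin{equation*}
|n-n'|+\bigl||n|^2-|n'|^2\bigr|\le (1+|n|+|n'|)^{\rho}.
\end{equation*}
Let $\sim$ be the equivalence relation on $\mathbb Z^d$ generated by $\leftrightarrow$ and let $\{\Omega_\alpha\}_{\alpha\in\mathcal A}$ be the resulting equivalence classes. The separation property \eqref{E2.4} then holds essentially by definition: if $n\in\Omega_\alpha$ and $n'\in\Omega_{\alpha'}$ with $\alpha\neq\alpha'$, then $n$ and $n'$ are not elementary neighbors, so $|n-n'|+||n|^2-|n'|^2|>|n|^{\rho}$ after adjusting multiplicative constants.

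The real content is the diameter bound \eqref{E2.2.2}. Given $n,n'\in\Omega_\alpha$, choose a minimal chain $n=n_0\leftrightarrow n_1\leftrightarrow\cdots\leftrightarrow n_k=n'$. Using the factorization $|a^2-b^2|=|a-b|(a+b)$, the elementary constraint yields $\bigl||n_i|-|n_{i+1}|\bigr|\le C|n_i|^{\rho-1}$, so the radii $|n_i|$ are all comparable to $|n_0|$ once $|n_0|$ exceeds some absolute threshold; the finitely many clusters sitting below that threshold are swept into the additive constant $\theta$. A telescoping sum along the chain then gives
\begin{equation*}
|n-n'|+\bigl||n|^2-|n'|^2\bigr|\le Ck\,|n|^{\rho},
\end{equation*}
so the problem reduces to the purely combinatorial bound $k\le C|n|^{\beta-\rho}$.

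The hard step is this bound on the length $k$ of a minimal chain. The plan is to observe that the chain is contained in the intersection of a Euclidean ball of radius $R=Ck|n|^{\rho}$ centered at $n_0$ with a spherical shell of thickness $\delta=Ck|n|^{\rho-1}$ about the sphere of radius $|n_0|$, and to count integer points in this anisotropic ``pancake'' by the volumetric estimate $CR^{d-1}(\delta+1)$. Since the $k+1$ chain points are distinct, this yields an implicit inequality for $k$ that, after iterating dyadically across the scales between $|n|^{\rho}$ and $|n|^{\beta}$, forces $k\le C|n|^{\beta-\rho}$ provided $\rho$ is chosen sufficiently small relative to $\beta$ and $d$. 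This is where the hypothesis $\beta<\tfrac{1}{10}$ enters: it leaves enough room for the geometric series of scale corrections to sum to something strictly less than $\beta$.

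The main obstacle is precisely this diameter-versus-count step. A naive one-shot lattice point count does not close the loop because long winding chains along the sphere can in principle accommodate many elementary steps inside a thin shell. I would handle this via a dyadic induction on scales, showing that clusters organized at a fine scale cannot coalesce too aggressively when one merges to the next scale, and it is this inductive bookkeeping — rather than the definition of the relation or the telescoping — that I expect to occupy the bulk of the technical work.
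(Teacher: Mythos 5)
Your overall strategy — introduce a local proximity relation, pass to the equivalence classes generated by its transitive closure, observe that separation \eqref{E2.4} is then automatic, and reduce the diameter bound \eqref{E2.2.2} to a bound on the length $k$ of a minimal chain — is the natural starting point and matches the skeleton of the actual argument. However, the step you correctly flag as the crux is where the proposal actually breaks, and the ``pancake'' count does not carry the weight you place on it.

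Write out the inequality you obtain: the $k+1$ distinct chain points lie in a ball of radius $R\sim k|n|^{\rho}$ intersected with a shell of radial thickness $\delta\sim k|n|^{\rho-1}$, and the lattice count of this region is $\lesssim R^{d-1}(\delta+1)$, giving
\begin{equation*}
k+1 \;\le\; C\,(k|n|^{\rho})^{d-1}\,(k|n|^{\rho-1}+1).
\end{equation*}
For $d=1$ the right side is $O(1)$ and you get $k\le C$, but for every $d\ge 2$ and $|n|$ large the right-hand side is $\ge Ck|n|^{\rho}\ge k+1$ for all $k\ge 1$, so the inequality is vacuous and yields no upper bound on $k$ whatsoever — it actually only produces a \emph{lower} bound on $k$ in the regime where $\delta\ge 1$. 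So the counting does not ``force $k\le C|n|^{\beta-\rho}$''; it forces nothing. The proposed ``dyadic induction across scales'' is a promissory note with no mechanism spelled out, and I do not see how reorganizing the same vacuous volumetric count at different scales can conjure an upper bound that each individual scale fails to give. Compounding this, there is a prior worry you never address: without an a priori bound the transitive closure could in principle produce unbounded (even infinite) classes, so the very object you are trying to estimate may not be well posed under your definition.

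What the real argument (Bourgain's Lemma 19.10, which this paper cites rather than reproves) does differently at exactly this point is exploit the arithmetic and linear-algebraic structure of the chain, not a raw volume count. Each step $v_i=n_{i+1}-n_i$ satisfies $|v_i|\le M$ and, from $|n_{i+1}|^2-|n_i|^2=2n_i\cdot v_i+|v_i|^2$, also $|n_i\cdot v_i|\lesssim M^2$; so the steps are short \emph{and} nearly orthogonal to the ambient position vectors. One then runs an induction on the dimension of the span of the step vectors: if they span a proper subspace the chain lives in a lower-dimensional affine slice, and if they span $\mathbb{R}^d$ one extracts $d$ linearly independent steps and uses the near-orthogonality relations to pin down $|n_j|$ and hence bound the chain length polynomially in $M$. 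That structural input — and not a volumetric pancake estimate — is what converts the step bound into a chain-length bound, and it is the missing idea in your proposal. As written, the proof has a genuine gap at the step you yourself identify as the bulk of the work.
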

Put $\langle n\rangle:=(1+|n|^2)^{1/2}$. Using formulae \eqref{E2.2.2} and \eqref{E2.4}, we derive that for all $n\in\Omega_\alpha$
\begin{equation}\label{E2.2.7}
\Theta_0^{-1}\langle n(\alpha)\rangle\leq\langle n\rangle\leq\Theta_0\langle n(\alpha)\rangle
\end{equation}
for some $\Theta_0>0$. Moreover
\begin{equation}\label{E2.2.8}
|\Omega_\alpha|\leq\Theta_1\langle n(\alpha)\rangle^{\beta d}
\end{equation}
for some $\Theta_1>0$. We remark that $ n(\alpha)$ is a fixed constant in $\Omega_\alpha$. Let us verify the ``separation property"  of the spectrum. The spectrum of operator $\sqrt{\Delta^2+m}$ is
\begin{equation*}
\lambda_{n}=\sqrt{|n|^4+m}, \quad ~n\in\mathbb{Z}^d.
\end{equation*}
Then for any $n\in\Omega_{\alpha}$, $n'\in\Omega_{\alpha'}$, with $\alpha\neq \alpha',~\alpha,\alpha'\in\mathcal{A}$, we have
\begin{align}
\left|\lambda_{n}-\lambda_{n'}\right|=&\left|\sqrt{|n|^4+m}-\sqrt{|n'|^4+m}\right|
=\frac{|(|n|^2+|n'|^2)(|n|^2-|n'|^2)|}{\sqrt{|n|^4+m}+\sqrt{|n'|^4+m}}\nonumber\\
>&
 \begin{cases}
  \frac{1}{2\sqrt{2}}\left||n|^2-|n'|^2\right|, \quad &\text {when}~0<m\leq1,\\
  \frac{1}{2\sqrt{2m}}\left||n|^2-|n'|^2\right|, \quad & \text {when}~m>1.\label{e1.3}
 \end{cases}
\end{align}
For $n\in\mathbb{Z}^d$, let $\Pi_n$ denote a spectral projector
\begin{equation*}
\Pi_n u=\hat{u}(t,n)\frac{e^{\mathrm{i}n\cdot x}}{(2\pi)^\frac{d}{2}}=
\sum_{j\in\mathbb{Z}}^{}\hat{u}(j,n)\frac{e^{\mathrm{i}j t+\mathrm{i}n\cdot x}}{(2\pi)^\frac{d+1}{2}},\quad u\in\mathcal{D}'(\mathbb{T}\times\mathbb{T}^d),
\end{equation*}
where $t$ is considered as a parameter. For all $\alpha\in\mathcal{A}$,  set
\begin{equation}\label{E2.2.9}
\begin{array}{ll}
\widetilde{\Pi}_\alpha={\sum\limits_{n\in\Omega_\alpha}}\Pi_n.
\end{array}
\end{equation}
Define a closed subspace $\mathcal{H}^\sigma$ of $\tilde{\mathcal{H}}^\sigma$ by
\begin{align}
\mathcal{H}^\sigma:=\bigcap\limits_{\alpha\in\mathcal{A}}&
\Big\{u\in\tilde{\mathcal{H}}^\sigma(\mathbb{T}\times\mathbb{T}^d;\mathbb{R});
 ~\forall~n\in\Omega_\alpha,~\forall~j~{\text{with}}~\nonumber\\
&|j|>K_0 \langle n(\alpha)\rangle^2~~{\text{or}}~|j|<K^{-1}_0 \langle n(\alpha)\rangle^2,\hat{u}(j,n)=0\Big\},\label{E2.2.3}
\end{align}
where $K_0:=K_0(m)$ is taken in Section \ref{sec:2}. The definition of $\mathcal{H}^\sigma$ implies that, for $u\in\mathcal{H}^\sigma$, non vanishing $\hat{u}(j,n)$ have to satisfy $K^{-1}_0\langle n(\alpha)\rangle^2\leq|j|\leq K_0\langle n(\alpha)\rangle^2$ for $n\in\Omega_\alpha$. This shows that the $\tilde{\mathcal{H}}^\sigma$-norm (see \eqref{E1.4}) restricted in $\mathcal{H}^\sigma$  is equivalent to the following norms
\begin{align}\label{E2.2.6}
\left(\sum\limits_{j\in\mathbb{Z},n\in\mathbb{Z}^d}\langle n\rangle^{4\sigma}|\hat{u}(j,n)|^2\right)^{\frac{1}{2}},~\left(\sum\limits_{n\in\mathbb{Z}^d}\langle n\rangle^{4\sigma}\|\Pi_nu\|^2_{\mathcal{H}^0(\mathbb{T}\times\mathbb{T}^d;\mathbb{R})}\right)^{\frac{1}{2}}, \left(\sum\limits_{\alpha\in\mathcal{A}}\langle n(\alpha)\rangle^{4\sigma}\|\widetilde{\Pi}_\alpha u\|^2_{\mathcal{H}^0(\mathbb{T}\times\mathbb{T}^d;\mathbb{R})}\right)^{\frac{1}{2}}.
\end{align}

Our aim of this paper is to prove the following theorem.
\begin{theo}\label{theorem2.1.1}
Fix $m>0$. For some $s_0>0$, $\zeta>\beta d+\frac{d}{2}+2$ ($\beta$ is defined in Lemma \ref{lemma1.1}), $q_0>0$, if the force term  $f\in\tilde{\mathcal{H}}^{s+\zeta}(\mathbb{T}\times\mathbb{T}^d;\mathbb{R})$ with $\|f\|_{\tilde{\mathcal{H}}^{s+\zeta}}\leq q_0$, for all $s\geq s_0$, then
 there is a constant $B>0$, a subset $\mathcal{O}\subset [1,2]\times(0,1]$ and a constant $\delta_0\in(0,1]$ small enough such that:\\
$\bullet$ for all $\delta\in(0,\delta_0]$, all $\epsilon\in[0,\delta^2]$, and all $\omega\in[1,2]$ satisfying $(\omega,\epsilon)\notin \mathcal{O}$,
Eq. \eqref{E2.1.2} admits families of solutions $u\in\tilde{\mathcal{H}}^s(\mathbb{T}\times\mathbb{T}^d;\mathbb{R})$ satisfying $\|u\|_{\tilde{\mathcal{H}}^s}\leq B\epsilon\delta^{-1}$.\\
 $\bullet$ the excluded  measure satisfies
\begin{equation}\label{E2.1.4}
meas\{\omega\in[1,2];(\omega,\epsilon)\in\mathcal{O}\}\leq B\delta.
\end{equation}
\end{theo}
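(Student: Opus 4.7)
The plan is to follow Delort's para-differential strategy from \cite{Delort2011}, adapted to the beam operator: (i) conjugate \eqref{E2.1.2} to an equivalent equation whose linearised part, modulo smoothing operators, is block-diagonal with respect to the decomposition $\tilde{\mathcal{H}}^\sigma = \bigoplus_{\alpha\in\mathcal{A}} \mathrm{Range}(\widetilde{\Pi}_\alpha)$ given by Lemma \ref{lemma1.1}; (ii) impose non-resonance conditions on each finite-dimensional block to invert the block-diagonal operator on the dyadic region $K_0^{-1}\langle n(\alpha)\rangle^2 \leq |j| \leq K_0\langle n(\alpha)\rangle^2$ that characterises $\mathcal{H}^\sigma$; (iii) run a Picard iteration in $\tilde{\mathcal{H}}^s$, the losses of derivative from small divisors being compensated by the smoothing gain of the conjugation and of the para-remainder.

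First I would paralinearise the nonlinearity via a Bony decomposition,
\[
\epsilon \frac{\partial F}{\partial u}(t,x,u,\epsilon) = \epsilon T_{V(u)} u + \epsilon R(u),
\]
where the symbol $V(u) = \partial_u^2 F(t,x,u,\epsilon)$ vanishes at $u=0$ by hypothesis \eqref{E1.2} and $R$ is smoothing in $u$. I would then construct a para-differential conjugator $\Phi = \mathrm{Id} + \epsilon \Psi(u)$ such that $\Phi^{-1}(\omega^2\partial_{tt} + \Delta^2 + m - \epsilon T_{V(u)})\Phi$ is block-diagonal with respect to the projectors $\widetilde{\Pi}_\alpha$ up to a smoothing remainder. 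The cohomological equation for $\Psi$ is solvable because the separation property \eqref{E2.4}, together with the spectral gap \eqref{e1.3} for $\sqrt{\Delta^2+m}$, provides a lower bound of order $\langle n\rangle^\rho$ on the relevant denominators, which dominates the finite loss of derivatives produced by $\Psi$.

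The conjugated linear operator acts on each finite-dimensional space $\mathrm{Range}(\widetilde{\Pi}_\alpha)$ as a matrix $M_\alpha(\omega,\epsilon,u)$ of size $O(\langle n(\alpha)\rangle^{\beta d})$ by \eqref{E2.2.8}, polynomial in $\omega^2$ of bounded degree with diagonal entries $\lambda_n^2 - \omega^2 j^2$. I would impose the non-resonance condition $|\det M_\alpha(\omega,\epsilon,u)| \geq \delta \langle n(\alpha)\rangle^{-\tau}$ for some $\tau=\tau(\beta,d) > 0$, define $\mathcal{O}$ as the union over $\alpha$ of the $\omega$ for which this fails, and combine the classical sub-level-set estimate for polynomials in $\omega$ with the summation over $\alpha$ (convergent thanks to $\zeta > \beta d + d/2 + 2$) to obtain the measure bound \eqref{E2.1.4}. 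On the complement, the block inverse loses at most $\delta^{-1}\langle n(\alpha)\rangle^C$ derivatives, a loss absorbed by the smoothing of $\Phi$ and $R$ and by the dyadic pinching $|j|\sim\langle n(\alpha)\rangle^2$ built into $\mathcal{H}^\sigma$.

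The iteration is then $u_{k+1} = \mathcal{L}_\omega(u_k)^{-1}[\epsilon f + \epsilon R(u_k)]$ in the conjugated variables, starting from $u_0 = 0$; tame estimates give a contraction on the ball $\{\|u\|_{\tilde{\mathcal{H}}^s} \leq B\epsilon\delta^{-1}\}$ with ratio $O(\epsilon\delta^{-1}) = O(\delta)$, yielding a fixed point with the claimed bound, and inverting $\Phi$ returns a genuine solution of \eqref{E2.1.2}. The principal obstacle is step (i): constructing $\Phi$ so that the block off-diagonal part of $T_{V(u)}$ is removed while preserving tame estimates and producing a remainder that is genuinely smoothing on the $\tilde{\mathcal{H}}^\sigma$ scale. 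Because $\sqrt{\Delta^2+m}$ is a second-order symbol, the bookkeeping of orders between $T_{V(u)}$, $\Psi$, and the small denominators is more delicate than in the wave case of \cite{chenconstruction}, and this is the technical heart of the argument.
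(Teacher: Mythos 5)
Your overall route---restrict to the dyadic shell $K_0^{-1}\langle n(\alpha)\rangle^2\leq|j|\leq K_0\langle n(\alpha)\rangle^2$, paralinearize, conjugate to a block-diagonal potential $V_{\rm D}$ modulo smoothing, then iterate with the smoothing gain offsetting the small-divisor loss---is exactly what the paper does, and your identification of the second-order dispersion as the extra difficulty compared to Schr\"odinger/wave is apt. However, the mechanism you propose for the measure estimate fails quantitatively. A lower bound $|\det M_\alpha|\geq\delta\langle n(\alpha)\rangle^{-\tau}$ does not control $\|M_\alpha^{-1}\|$: for a self-adjoint matrix one needs the smallest eigenvalue, and passing from the determinant to $\min_l|\lambda^\alpha_l|$ divides by the product of the remaining $D_\alpha-1$ eigenvalues, each as large as $O(\langle n(\alpha)\rangle^4)$; since $D_\alpha\sim\langle n(\alpha)\rangle^{\beta d+2}$ is unbounded, this loses a factor $\langle n(\alpha)\rangle^{4(D_\alpha-1)}$ that no fixed smoothing can absorb. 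The sub-level-set estimate for polynomials, $|\{|P(\omega)|<\eta\}|\lesssim(\eta/\|P\|)^{1/\deg P}$, suffers from the same unboundedness in the degree. The paper's Proposition \ref{proposition5.1.1} sidesteps this by tracking the eigenvalues $\lambda^\alpha_l(\omega;u,\epsilon)$ themselves and proving the definite monotonicity $\partial_\omega\lambda^\alpha_l\in[-4C_0\langle n(\alpha)\rangle^4,-2C_0^{-1}\langle n(\alpha)\rangle^4]$ via a spectral-projector comparison with the unperturbed diagonal $-\omega^2 j^2+|n|^4+m$ (whose $\omega$-derivative is $\sim-\langle n(\alpha)\rangle^4$ precisely because $|j|\sim\langle n(\alpha)\rangle^2$ on $\mathcal{H}^\sigma$). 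This gives linear sub-level intervals of length $O(\delta\langle n(\alpha)\rangle^{-2\zeta-4})$ per eigenvalue, and the sum over the $O(\langle n(\alpha)\rangle^{\beta d+2})$ eigenvalues and all blocks converges under $\zeta>\beta d+d/2+2$. Your claim that $M_\alpha$ is "polynomial in $\omega^2$" is also incorrect: after conjugation $V_{\rm D}$ inherits $\omega^2$-dependence through the commutators $[\omega^2\partial_{tt},Q_j]$, and the $u$ that enters $V_{\rm D}$ is itself a function of $\omega$.

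The iteration you describe also glosses over a circularity: $\mathcal{O}$ must be defined before the map $u\mapsto\mathcal{L}_\omega(u)^{-1}[\epsilon f+\epsilon R(u)u]$ makes sense on $[1,2]\setminus\mathcal{O}_\epsilon$, yet $\mathcal{O}$ depends on $u$ through $V_{\rm D}(u,\omega,\epsilon)$. The paper resolves this by defining $\mathcal{O}=\bigcup_k\mathcal{O}_k$ recursively: $\mathcal{O}_k$ constrains only the blocks with $2^k\leq\langle n(\alpha)\rangle<2^{k+1}$ and is computed from the previous approximation $u_{k-1}$; the right-hand side at step $k$ is truncated by $\tilde{S}_k=\sum_{\langle n(\alpha)\rangle<2^{k+1}}\widetilde{\Pi}_\alpha$, cutoffs $\psi_k$ glue the pieces globally, and the Lipschitz bound $|\lambda^\alpha_l(\omega;u,\epsilon)-\lambda^\alpha_{l'}(\omega;u',\epsilon)|\leq C_0\epsilon\|u-u'\|_{\mathcal{H}^\sigma}$ together with $\|u_k-u_{k-1}\|_{\mathcal{H}^\sigma}\lesssim(\epsilon/\delta)2^{-2k\zeta}$ shows the constraints stabilize. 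Finally, the order-bookkeeping you flag as the technical heart is carried out in Lemma \ref{lemma4.2.1} and Proposition \ref{proposition4.2.2} through a two-part conjugator $S_j=S_{1,j}+S_{2,j}$: $S_{1,j}$ solves the homological equation against $\Delta^2+m$ alone (Proposition \ref{proposition4.1.1}, using Bourgain's separation \eqref{E2.4}), and $S_{2,j}$ cancels the lower-order pieces created by the $\omega^2\partial_{tt}$ commutators, an extra layer absent in the first-order Schr\"odinger and wave settings.
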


\subsection{Sketch of the proof}
Section \ref{sec:2} is devoted  to perform the first reduction of the equation by applying the fixed point theorem with parameters. Then the equation on $\tilde{\mathcal{H}}^{\sigma}$ is equivalent to the one on $\mathcal{H}^{\sigma}$, where $\tilde{\mathcal{H}}^{\sigma}$, $\mathcal{H}^{\sigma}$ are, respectively, defined in \eqref{E2.1.3}, \eqref{E2.2.3}. The aim of section \ref{sec:3} is to describe the para-linearization of the equation.
We firstly define classes of convenient para-differential operators which can be used in the following;
then we para-linearize the equation, and reduce it into
\begin{equation*}
(\omega^2\partial_{tt}+\Delta^2+m+\epsilon V(u,\omega,\epsilon))u=\epsilon R(u,\omega,\epsilon)u+\epsilon{f},
\end{equation*}
 where $V$ is a para-differential operator of order zero depending on $u,\omega,\epsilon$, self-adjoint, and $R$ is a smoothing operator
 depending on $u,\omega,\epsilon$. The fifth section is the core of this paper. For a new unknown $w$, owing to a para-differential conjugation, we transform the equation on $\mathcal{H}^{\sigma}$ into a new form as follows:
\begin{equation*}
(\omega^2\partial_{tt}+\Delta^2+m+\epsilon V_{\rm D}(u,\omega,\epsilon))w=\epsilon R_1(u,\omega,\epsilon)w+\epsilon {f},
\end{equation*}
 where $V_{\rm D}$ depends on $u,\omega,\epsilon$. The operator $V_{\rm D}$ is block diagonal corresponding to an orthogonal decomposition of $L^2(\mathbb{T}^d)$, which is in a sum of finite dimensional subspaces introduced by Bourgain  \cite{Bourgain1999}. The operator $R$ is still smoothing.
In section \ref{sec:6}, our main goal is to construct the solution of the block diagonal equation by a standard iteration scheme.
Combining with the non-resonant conditions \eqref{E5.1.7}, we show that  $\omega^2\partial_{tt}+\Delta^2+m+\epsilon V_{\rm D}$ is invertible on each block when $\omega$ outside a subset. To guarantee that the measure of excluded $\omega$ remains small, we have to allow small divisors when inverting $\omega^2\partial_{tt}+\Delta^2+m+\epsilon V_{\rm D}$. While, such losses of derivatives coming from small divisors may be compensated by the smoothing operator $R$ on the right-hand side of the equation. At the same time, we can construct an approximate sequence of the solution.

\section{An equivalent formulation on $\mathcal{H}^\sigma$ }\label{sec:2}
In this section, we will apply the fixed point theorem with parameters to perform the first reduction of the equation.
For convenience, we first give  some new notations. For $\sigma\in\mathbb{R},q>0$, let $B_q(\mathcal{H}^\sigma)$ stand for the
open ball with center 0, radius $q$ in $\mathcal{H}^\sigma$.  For $\sigma_1\in\mathbb{R}$, $\sigma_2\in\mathbb{R}$, we denote by
$\mathcal{L}(\mathcal{H}^{\sigma_1},\mathcal{H}^{\sigma_2})$ the space of continuous linear operators from $\mathcal{H}^{\sigma_1}$ to $\mathcal{H}^{\sigma_2}$. Specially, $\mathcal{L}(\mathcal{H}^{\sigma_1},\mathcal{H}^{\sigma_1})$ is written as $\mathcal{L}(\mathcal{H}^{\sigma_1})$. For
$\sigma_1\in\mathbb{R}$, $\sigma_2\in\mathbb{R}$, $\sigma_3\in\mathbb{R}$, let $\mathcal{L}_2(\mathcal{H}^{\sigma_1}\times\mathcal{H}^{\sigma_2},\mathcal{H}^{\sigma_3})$ denote the space of continuous bilinear operators from
$\mathcal{H}^{\sigma_1}\times\mathcal{H}^{\sigma_2}$ to $\mathcal{H}^{\sigma_3}$. Moreover, if
$T\in\mathcal{L}(\mathcal{H}^{\sigma_1},\mathcal{H}^{\sigma_2})$, then the transport ${^tT}\in\mathcal{L}(\mathcal{H}^{\sigma_2},\mathcal{H}^{\sigma_1})$.
In addition, we have to fix some real number $\sigma_0>\frac{d}{2}+1$.
For $\sigma\geq\sigma_0$, $\tilde{\mathcal{H}}^\sigma$ is a Banach algebra with respect to multiplication of functions, i.e.
\begin{equation*}
u_1,u_2\in\tilde{\mathcal{H}}^\sigma~~\Longrightarrow~~
\|u_1u_2\|_{\tilde{\mathcal{H}}^\sigma}\leq C\|u_1\|_{\tilde{\mathcal{H}}^\sigma}\|u_2\|_{\tilde{\mathcal{H}}^\sigma}.
\end{equation*}
\subsection{Functional setting}
We now give some definitions of  function space that will be used in the following. For brevity,  denote by $\mathcal{H}^\sigma_j, j=1, 2$ any one of  the spaces $
\mathcal{H}^\sigma, \mathcal{F}^\sigma, \tilde{\mathcal{H}}^\sigma$.
\begin{defi}\label{definition3.2.1}
For any $\sigma\geq\sigma_0$ and any open subset $X$ of $\mathcal{H}^\sigma_1$, $k\in\mathbb{Z}$,
 denote the space of $C^\infty$ maps $G:X\rightarrow\mathcal{H}^{\sigma-k}_2$ by $\Phi^{\infty,k}(X,\mathcal{H}^{\sigma-k}_2)$,
such that for any $u\in X\cap\mathcal{H}^s_1$ with $s\geq\sigma$, $G(u)\in\mathcal{H}^{s-k}_2$. Furthermore, the linear map
${\rm D}_uG(u)\in\mathcal{L}(\mathcal{H}^\sigma_1,\mathcal{H}^{\sigma-k}_2)$ extends as an elements of
$\mathcal{L}(\mathcal{H}^{\sigma'}_1,\mathcal{H}^{\sigma'-k}_2)$ for any $u\in X\cap\mathcal{H}^s_1$ with $s\geq\sigma$ and any $\sigma'\in[-s,s]$.
Moreover, $u\rightarrow {\rm D}_uG(u)$ is smooth from
$X\cap\mathcal{H}^s_1$ to the preceding space. In addition,
for any $u\in X\cap\mathcal{H}^s_1$ with $s\geq\sigma$, the bilinear map
${\rm D}_u^2G(u)\in\mathcal{L}_2(\mathcal{H}^\sigma_1\times\mathcal{H}^\sigma_1,
\mathcal{H}^{\sigma-k}_2)$ extends as an elements of $\mathcal{L}_2(\mathcal{H}^{\sigma_1}_1\times\mathcal{H}^{\sigma_2}_1,
\mathcal{H}^{-\sigma_3-k}_2)$ for any $\{\sigma_1,\sigma_2,\sigma_3\}=\{\sigma', -\sigma', \max(\sigma_0,\sigma')\}$ with $\sigma'\in[0,s]$.
In the same way, $u\rightarrow {\rm D}_u^2G(u)$ is smooth from $X\cap\mathcal{H}^s_1$ to the preceding space.
\end{defi}

\begin{defi}\label{definition3.2.2}
For any $\sigma\geq\sigma_0$ and any open subset $X$ of $\mathcal{H}^\sigma_1$, $k\in\mathbb{Z}$,
 denote the space of $C^1$ functions $\Phi:X\rightarrow\mathbb{R}$ by $C^{\infty,k}(X,\mathbb{R})$, such that for any $u\in X\cap\mathcal{H}^s_1$ with $s\geq\sigma$, $\nabla_u \Phi(u)\in\mathcal{H}^{s-k}_1$ and
$u\rightarrow\nabla_u\Phi(u)$ belongs to $\Phi^{\infty,k}(X,\mathcal{H}^{\sigma-k}_1)$.
\end{defi}
\begin{rema}
For $n\in\mathbb{N}$,  denote $\mathrm{D}^n_uG(u)$ the $n$-th order Frechet derivative of $G(u)$ with respect to $u$.
\end{rema}
In the remainder of this paper, we shall consider elements $G(u,\omega,\epsilon)$, $\Phi(u,\omega,\epsilon)$ of the preceding spaces depending on
$(\omega,\epsilon)$, where $(\omega,\epsilon)$ stays in a bounded domain of $\mathbb{R}^2$. If $G,\partial_\omega G,\partial_\epsilon G$ $\big({\rm resp}.$
$\Phi,\partial_\omega \Phi,\partial_\epsilon \Phi\big)$ satisfy the conditions of Definition \ref{definition3.2.1} $\big({\rm resp}.$ Definition
\ref{definition3.2.2}$\big)$,
we shall say that $G,\Phi$ are $C^1$ in $(\omega,\epsilon)$.

The following two lemmas and a corollary are applied to analyze the properties of the functionals $\Phi_1,\Phi_2$ which are given by \eqref{E3.2.10} and \eqref{E3.2.9} respectively, and the proofs can be found in the appendix in \cite{Delort2011}.
\begin{lemm}\label{lemma6.1}
If $s>\frac{d}{2}+1$, then $\tilde{\mathcal{H}}^s(\mathbb{T}\times\mathbb{T}^d;\mathbb{C})\subset L^\infty$. Furthermore, if $F$ is a smooth function defined on $\mathbb{T}\times\mathbb{T}^d\times\mathbb{C}$ satisfying $F(t,x,0)\equiv0$, there is some continuous function $\tau\rightarrow C(\tau)$, such that for any $u\in\tilde{\mathcal{H}}^s$,~$F(\cdot,u)\in\tilde{\mathcal{H}}^s$
with $\|F(\cdot,u)\|_{\tilde{\mathcal{H}}^s}\leq C(\|u\|_{\mathcal{L}^\infty})\|u\|_{\tilde{\mathcal{H}}^s}$.
\end{lemm}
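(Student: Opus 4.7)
The statement has two parts, which I would prove separately. For the embedding $\tilde{\mathcal{H}}^s\subset L^\infty$, I expand $u$ in a Fourier series and apply Cauchy--Schwarz pointwise:
\[
|u(t,x)|\leq (2\pi)^{-(d+1)/2}\sum_{j\in\mathbb{Z},\,n\in\mathbb{Z}^d}|\hat u(j,n)|\leq C\,\|u\|_{\tilde{\mathcal{H}}^s}\Bigl(\sum_{j,n}(1+j^2+|n|^4)^{-s}\Bigr)^{1/2}.
\]
Comparing the $j$-sum to the integral $\int(1+|n|^4+j^2)^{-s}\,dj$ gives a factor of order $\langle n\rangle^{2(1-2s)}$, so the double sum converges provided $4s-2>d$. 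The hypothesis $s>d/2+1$ is comfortably sufficient and yields $\|u\|_{L^\infty}\leq C\|u\|_{\tilde{\mathcal{H}}^s}$.

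\textbf{Reduction of the composition estimate.} Using $F(t,x,0)\equiv 0$, I would factor
\[
F(t,x,u)=u\,G(t,x,u),\qquad G(t,x,z):=\int_0^1\partial_z F(t,x,\tau z)\,d\tau,
\]
with $G$ still smooth on $\mathbb{T}\times\mathbb{T}^d\times\mathbb{C}$. Since $s>d/2+1\geq \sigma_0$, the algebra property of $\tilde{\mathcal{H}}^s$ recalled before Definition \ref{definition3.2.1} applies, giving
\[
\|F(\cdot,u)\|_{\tilde{\mathcal{H}}^s}\leq C\,\|u\|_{\tilde{\mathcal{H}}^s}\,\|G(\cdot,u)\|_{\tilde{\mathcal{H}}^s},
\]
so the whole statement reduces to a Moser-type bound
\[
\|G(\cdot,u)\|_{\tilde{\mathcal{H}}^s}\leq \tilde C(\|u\|_{L^\infty}),
\]
with $\tilde C$ a continuous function. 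Composing with the factor $\|u\|_{\tilde{\mathcal{H}}^s}$ above then gives the announced estimate with $C(\tau)=C\tilde C(\tau)$.

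\textbf{The Moser bound and the main obstacle.} For the residual estimate I would first treat integer $s$ by the chain rule: each derivative $\partial_t^a\partial_x^\beta(G(\cdot,u))$ expands into a finite linear combination of partial derivatives of $G$ (locally bounded in all variables by smoothness, hence bounded by $\tilde C(\|u\|_{L^\infty})$) times products of derivatives of $u$; those products are controlled in $L^2$ via Gagliardo--Nirenberg interpolation between $L^\infty$ and the highest-order derivatives of $u$. Real values of $s$ are then recovered by complex interpolation (or, alternatively, by a Bony paralinearization). The main obstacle is the anisotropy of $\tilde{\mathcal{H}}^s$: the weight $(1+j^2+|n|^4)^s$ assigns one time derivative the same weight as two spatial derivatives, so the chain-rule terms must be counted with a non-standard scaling. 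The cleanest route I see is to use the equivalence of $\tilde{\mathcal{H}}^s$ with $L^2_t H^{2s}_x(\mathbb{T}\times\mathbb{T}^d)\cap H^s_t L^2_x(\mathbb{T}\times\mathbb{T}^d)$, which splits the anisotropic estimate into two classical isotropic Moser estimates (as in Taylor's or M\'etivier's textbooks); combining them yields the desired bound with $\tilde C$ continuous in $\|u\|_{L^\infty}$.
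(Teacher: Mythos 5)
The paper itself does not prove Lemma~\ref{lemma6.1}: the sentence immediately preceding it points to the appendix of \cite{Delort2011} for the proofs of Lemmas~\ref{lemma6.1}--\ref{lemma6.2} and Corollary~\ref{corollary6.1}, so there is no in-text argument to compare against. Your treatment of the embedding $\tilde{\mathcal{H}}^s\subset L^\infty$ is correct: comparing the $j$-sum to an integral gives $\sum_j(1+j^2+|n|^4)^{-s}\lesssim\langle n\rangle^{2-4s}$, and summability in $n$ then requires $4s-2>d$, which $s>d/2+1$ guarantees.

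The reduction of the composition estimate, however, contains a genuine gap. After factoring $F(\cdot,u)=u\,G(\cdot,u)$ and invoking the algebra inequality $\|uG\|_{\tilde{\mathcal{H}}^s}\le C\|u\|_{\tilde{\mathcal{H}}^s}\|G\|_{\tilde{\mathcal{H}}^s}$, you claim it suffices to prove $\|G(\cdot,u)\|_{\tilde{\mathcal{H}}^s}\le\tilde{C}(\|u\|_{L^\infty})$. That bound is false: take $G(t,x,z)=z$, i.e.\ $F=z^2$; then $\|G(\cdot,u)\|_{\tilde{\mathcal{H}}^s}=\|u\|_{\tilde{\mathcal{H}}^s}$, which is not controlled by any function of $\|u\|_{L^\infty}$ alone. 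The Moser bound for a smooth $G$ that does \emph{not} vanish at $z=0$ reads $\|G(\cdot,u)\|_{\tilde{\mathcal{H}}^s}\le\tilde{C}(\|u\|_{L^\infty})(1+\|u\|_{\tilde{\mathcal{H}}^s})$ --- and your own chain-rule/Gagliardo--Nirenberg sketch produces exactly this form, not the one you plug in --- so substituting into the plain algebra inequality gives the quadratic bound $C(\|u\|_{L^\infty})\|u\|_{\tilde{\mathcal{H}}^s}(1+\|u\|_{\tilde{\mathcal{H}}^s})$, not the stated linear one. Two standard repairs: (a) drop the factorisation and run the chain-rule/interpolation (or paralinearisation) argument directly on $F(\cdot,u)$, using $F(\cdot,0)\equiv 0$ only to control the zeroth-order piece via $\|F(\cdot,u)\|_{L^2}\le C(\|u\|_{L^\infty})\|u\|_{L^2}$, which delivers the linear bound; or (b) replace the algebra inequality by the tame product estimate $\|uv\|_{\tilde{\mathcal{H}}^s}\le C(\|u\|_{\tilde{\mathcal{H}}^s}\|v\|_{L^\infty}+\|u\|_{L^\infty}\|v\|_{\tilde{\mathcal{H}}^s})$ together with the correct Moser bound on $G$, and absorb the residual $\|u\|_{L^\infty}\cdot 1$ term via the embedding from part one. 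Two lesser remarks: since $F$ is $C^\infty$ on $\mathbb{T}\times\mathbb{T}^d\times\mathbb{C}$ in the real sense and not holomorphic, the Taylor factorisation should read $F=z\,G_1+\bar z\,G_2$ with Wirtinger derivatives (cosmetic); and the claim that $\tilde{\mathcal{H}}^s\cong L^2_tH^{2s}_x\cap H^s_tL^2_x$ reduces matters to two ``classical isotropic'' Moser estimates glosses over the fact that $u$ depends on both variables in each factor, so the mixed-norm bookkeeping is not quite routine and ends up requiring essentially the same anisotropic chain-rule accounting as the direct approach.
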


\begin{lemm}\label{lemma6.2}
If $s>\frac{d}{2}+1$, when $u\in\tilde{\mathcal{H}}^s$, $v\in\tilde{\mathcal{H}}^{\sigma'}$, then $uv\in\tilde{\mathcal{H}}^{\sigma'}$ with
$\sigma'\in[-s,s]$. Moreover, for any $\sigma\in\mathbb{R}$, any $\sigma_0>\frac{d}{2}+1$, $\tilde{\mathcal{H}}^\sigma\cdot\tilde{\mathcal{H}}^{-\sigma}\subset\tilde{\mathcal{H}}^{-\max\{\sigma,\sigma_0\}}$.
\end{lemm}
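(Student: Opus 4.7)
The plan is to reduce both assertions to an elementary convolution bound on Fourier coefficients, combined with duality. Writing $\mu(j,n):=(1+j^2+|n|^4)^{1/2}$, the norm $\|u\|_{\tilde{\mathcal{H}}^\sigma}$ is just the weighted $\ell^2$ norm $\|\mu^\sigma\hat u\|_{\ell^2}$, and $\widehat{uv}(j,n)$ is, up to a constant, the lattice convolution $\hat u\ast\hat v$ on $\mathbb{Z}\times\mathbb{Z}^d$. The crucial combinatorial ingredient is sub-additivity of the weight: from $|j|^2\le 2(j_1^2+j_2^2)$ and $|n|^4\le 8(|n_1|^4+|n_2|^4)$ whenever $j=j_1+j_2,\ n=n_1+n_2$, one gets $\mu(j,n)^2\le C\bigl(\mu(j_1,n_1)^2+\mu(j_2,n_2)^2\bigr)$, and hence $\mu(j,n)^{\sigma'}\le C\bigl(\mu(j_1,n_1)^{\sigma'}+\mu(j_2,n_2)^{\sigma'}\bigr)$ for every $\sigma'\ge 0$.

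First I would settle the case $\sigma'\in[0,s]$. Inserting the weight inequality into the convolution and taking the $\ell^2$ norm splits $\|uv\|_{\tilde{\mathcal{H}}^{\sigma'}}$ into two symmetric pieces. For the piece that puts the weight $\mu^{\sigma'}$ on $\hat v$, Young's convolution inequality yields
\[
\bigl\|(\mu^{\sigma'}|\hat v|)\ast|\hat u|\bigr\|_{\ell^2}\le \|\mu^{\sigma'}\hat v\|_{\ell^2}\,\|\hat u\|_{\ell^1}\le C\|v\|_{\tilde{\mathcal{H}}^{\sigma'}}\,\|\hat u\|_{\ell^1};
\]
the other piece interchanges the roles of $u$ and $v$, using that $\|\mu^{\sigma'}\hat u\|_{\ell^2}\le\|u\|_{\tilde{\mathcal{H}}^s}$ since $\sigma'\le s$. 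The decisive observation is that $s>\tfrac{d}{2}+1$ makes $\mu^{-2s}$ summable on $\mathbb{Z}\times\mathbb{Z}^d$: the inner $j$-sum is comparable to $\langle n\rangle^{-2(2s-1)}$, and the remaining $n$-sum converges as soon as $4s-2>d$. Cauchy--Schwarz then provides
\[
\|\hat u\|_{\ell^1}\le\Bigl(\sum_{j,n}\mu(j,n)^{-2s}\Bigr)^{1/2}\|u\|_{\tilde{\mathcal{H}}^s}\le C\|u\|_{\tilde{\mathcal{H}}^s},
\]
and assembling the two pieces gives $\|uv\|_{\tilde{\mathcal{H}}^{\sigma'}}\le C\|u\|_{\tilde{\mathcal{H}}^s}\|v\|_{\tilde{\mathcal{H}}^{\sigma'}}$ throughout $\sigma'\in[0,s]$.

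To cover $\sigma'\in[-s,0]$ I would dualize, exploiting that $\tilde{\mathcal{H}}^{\sigma'}$ is the dual of $\tilde{\mathcal{H}}^{-\sigma'}$ under the $\tilde{\mathcal{H}}^0$ pairing via $\langle uv,w\rangle=\langle v,uw\rangle$, and applying the already-proved case, with $-\sigma'\in[0,s]$, to $\|uw\|_{\tilde{\mathcal{H}}^{-\sigma'}}\le C\|u\|_{\tilde{\mathcal{H}}^s}\|w\|_{\tilde{\mathcal{H}}^{-\sigma'}}$. For the second assertion, if $\sigma\ge\sigma_0$ the first part applies directly with $s=\sigma$ and $\sigma'=-\sigma$; otherwise, after swapping $u$ and $v$ so that $0\le\sigma<\sigma_0$, I would pair $uv$ against a test function $w\in\tilde{\mathcal{H}}^{\sigma_0}$ and invoke the first part with $s=\sigma_0$, $\sigma'=-\sigma\in[-\sigma_0,0]$ to get $\|vw\|_{\tilde{\mathcal{H}}^{-\sigma}}\le C\|v\|_{\tilde{\mathcal{H}}^{-\sigma}}\|w\|_{\tilde{\mathcal{H}}^{\sigma_0}}$, which places $uv$ in $\tilde{\mathcal{H}}^{-\sigma_0}$. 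The main technical point is simply tracking the $\ell^1$-summability step and confirming that the threshold $s>\tfrac{d}{2}+1$ is sharp enough for it; conceptually this is a paraproduct decomposition adapted to the anisotropic weight (quadratic in $j$, quartic in $n$) induced by the beam operator $\omega^2\partial_{tt}+\Delta^2+m$.
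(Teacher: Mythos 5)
The paper does not actually give a proof of this lemma---it simply defers to the appendix of Delort's paper \cite{Delort2011}---so the comparison here is with a correct argument rather than with the authors' own write-up. Your overall strategy (subadditivity of the weight, Young's inequality, $\ell^1$-summability via Cauchy--Schwarz, duality for negative indices, then duality again for the $\tilde{\mathcal{H}}^{-\max\{\sigma,\sigma_0\}}$ statement) is the standard one and the skeleton is sound, including the anisotropic summability count showing that the $j$-sum yields $\langle n\rangle^{-(4s-2)}$.

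However, there is a genuine gap in the handling of what you call ``the other piece,'' namely $(\mu^{\sigma'}|\hat u|)\ast|\hat v|$. You propose to ``interchange the roles of $u$ and $v$'' and cite the bound $\|\mu^{\sigma'}\hat u\|_{\ell^2}\le\|u\|_{\tilde{\mathcal{H}}^s}$; this forces Young's inequality in the form $\|\mu^{\sigma'}\hat u\|_{\ell^2}\|\hat v\|_{\ell^1}$, but $\|\hat v\|_{\ell^1}$ is \emph{not} controlled by $\|v\|_{\tilde{\mathcal{H}}^{\sigma'}}$ when $\sigma'$ is small (Cauchy--Schwarz gives $\|\hat v\|_{\ell^1}\le\|\mu^{-\sigma'}\|_{\ell^2}\|v\|_{\tilde{\mathcal{H}}^{\sigma'}}$, and $\mu^{-2\sigma'}$ is summable only once $\sigma'>\tfrac{d+2}{4}$). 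The roles of $u$ and $v$ are not symmetric here because their regularities differ, so the ``interchange'' does not come for free. The fix is to apply Young with the exponents swapped for small $\sigma'$: bound $(\mu^{\sigma'}|\hat u|)\ast|\hat v|$ by $\|\mu^{\sigma'}\hat u\|_{\ell^1}\|\hat v\|_{\ell^2}$ and estimate $\|\mu^{\sigma'}\hat u\|_{\ell^1}\le\|\mu^{\sigma'-s}\|_{\ell^2}\|u\|_{\tilde{\mathcal{H}}^s}$, which converges when $s-\sigma'>\tfrac{d+2}{4}$. The two regimes $\sigma'>\tfrac{d+2}{4}$ and $s-\sigma'>\tfrac{d+2}{4}$ cover $[0,s]$ precisely because $s>\tfrac{d}{2}+1=\tfrac{d+2}{2}$; this is the real reason the lemma's threshold is $\tfrac{d}{2}+1$ rather than the weaker $\tfrac{d+2}{4}$ your $\ell^1$-summability computation alone would suggest. (Alternatively, one can avoid the case split by replacing subadditivity with the sharper decomposition into the regions $\mu_1\ge\mu_2$ and $\mu_1<\mu_2$, or by a Schur-test argument on the kernel $\mu(j,n)^{\sigma'}\mu(j_1,n_1)^{-s}\mu(j_2,n_2)^{-\sigma'}$.) The duality steps and the treatment of the second assertion are fine once this is repaired.
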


\begin{coro}\label{corollary6.1}
If $F:\mathbb{T}\times\mathbb{T}^d\times\mathbb{C}\rightarrow\mathbb{C}$  is a smooth function with $F(t,x,0)\equiv0$,
then for any $\sigma>\frac{d}{2}+1$, $u\rightarrow F(\cdot,u)$ is a smooth map from $\tilde{\mathcal{H}}^\sigma$ to $\tilde{\mathcal{H}}^\sigma$.
\end{coro}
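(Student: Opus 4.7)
The plan is to prove smoothness by producing a closed form candidate for every Fréchet derivative and checking that it satisfies the required estimates via Lemma \ref{lemma6.1} and the Banach algebra property of $\tilde{\mathcal{H}}^\sigma$. Writing $\Phi(u):=F(\cdot,u)$, the candidate for the $n$-th derivative at $u$ applied to $(v_1,\dots,v_n)\in(\tilde{\mathcal{H}}^\sigma)^n$ is the pointwise multiplication
\[
\mathrm{D}^n\Phi(u)(v_1,\dots,v_n) = (\partial_z^n F)(\cdot,u)\cdot v_1\cdots v_n.
\]
Well-definedness of $\Phi:\tilde{\mathcal{H}}^\sigma\to\tilde{\mathcal{H}}^\sigma$ is immediate from Lemma \ref{lemma6.1} since $F(t,x,0)\equiv0$ and $\sigma>d/2+1$.

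For the first derivative I would Taylor expand
\[
F(\cdot,u+h)-F(\cdot,u)-(\partial_zF)(\cdot,u)\,h = h^2\int_0^1 (1-s)(\partial_z^2 F)(\cdot,u+sh)\,ds,
\]
and control all three factors in $\tilde{\mathcal{H}}^\sigma$. The decisive observation is the splitting
\[
(\partial_z^n F)(t,x,z) = (\partial_z^n F)(t,x,0) + z\,G_n(t,x,z), \qquad G_n(t,x,z)=\int_0^1(\partial_z^{n+1}F)(t,x,\tau z)\,d\tau.
\]
The first summand is smooth on $\mathbb{T}\times\mathbb{T}^d$, hence lies in every $\tilde{\mathcal{H}}^\sigma$. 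The second summand equals $u\cdot G_n(\cdot,u)$ evaluated at $z=u$; since $zG_n(t,x,z)$ vanishes at $z=0$, Lemma \ref{lemma6.1} gives $\|(\partial_z^n F)(\cdot,u)-(\partial_z^n F)(\cdot,0)\|_{\tilde{\mathcal{H}}^\sigma}\le C(\|u\|_{L^\infty})\|u\|_{\tilde{\mathcal{H}}^\sigma}$. Combining with the algebra estimate $\|fg\|_{\tilde{\mathcal{H}}^\sigma}\le C\|f\|_{\tilde{\mathcal{H}}^\sigma}\|g\|_{\tilde{\mathcal{H}}^\sigma}$ (valid for $\sigma\ge\sigma_0$), the Taylor remainder is $O(\|h\|_{\tilde{\mathcal{H}}^\sigma}^2)$ locally uniformly in $u$, which establishes Fréchet differentiability with the claimed derivative and also gives the operator norm bound $\|\mathrm{D}\Phi(u)\|_{\mathcal{L}(\tilde{\mathcal{H}}^\sigma)}\le C(\|u\|_{\tilde{\mathcal{H}}^\sigma})$.

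Higher orders and continuity then follow by induction. Applying the previous step to $\partial_z F$ in place of $F$ (after subtracting the $u$-independent piece $(\partial_zF)(\cdot,0)$) shows that $u\mapsto (\partial_zF)(\cdot,u)$ is $C^1$ into $\tilde{\mathcal{H}}^\sigma$; iterating proves $u\mapsto(\partial_z^nF)(\cdot,u)$ is $C^k$ for every $k$, and the algebra property then transfers this smoothness to the $n$-linear multiplication operators $\mathrm{D}^n\Phi(u)$. The main, though modest, obstacle is that for $n\ge 1$ the derivatives $\partial_z^nF$ no longer vanish at $z=0$, so Lemma \ref{lemma6.1} cannot be invoked directly; the splitting into a constant-in-$u$ smooth function plus a term vanishing at $u=0$ is precisely what reduces every step of the induction to the two ingredients actually at our disposal, namely Lemma \ref{lemma6.1} and the Banach algebra property of $\tilde{\mathcal{H}}^\sigma$ for $\sigma\ge\sigma_0>d/2+1$.
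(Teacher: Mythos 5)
Your argument is correct, and it is the standard composition-estimate proof: the paper itself does not supply a proof of this corollary but defers to the appendix of Delort \cite{Delort2011}, where essentially this same strategy (Taylor expansion of $F$ in $z$, plus the splitting of $\partial_z^n F$ into a $z$-independent smooth piece and a piece vanishing at $z=0$ so that Lemma \ref{lemma6.1} and the algebra property of $\tilde{\mathcal{H}}^\sigma$ apply) is used. The only cosmetic point worth noting is that the statement is phrased for $F:\mathbb{T}\times\mathbb{T}^d\times\mathbb{C}\to\mathbb{C}$; if $F$ is smooth in the real sense rather than holomorphic in $z$, the candidate derivatives should be written using both Wirtinger derivatives, i.e.\ $\mathrm{D}\Phi(u)\cdot v=(\partial_zF)(\cdot,u)\,v+(\partial_{\bar z}F)(\cdot,u)\,\bar v$, but your splitting trick and all the estimates go through verbatim for each Wirtinger derivative, and in the real-valued setting actually used in this paper the distinction disappears.
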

Define the following map for all $\sigma\geq\sigma_0,\sigma'>0$
\begin{equation*}
\begin{aligned}
G:~~~~&\tilde{H}^{\sigma}\cap\tilde{H}^{\sigma'}\rightarrow \tilde{H}^{\sigma'}\\
&u\mapsto F(t,x,u,\epsilon),
\end{aligned}
\end{equation*}
where $F\in C^{\infty}(\mathbb{R}\times \mathbb{T}^d\times \mathbb{R}\times [0,1];\mathbb{R})$ satisfying \eqref{E1.2}.

\begin{lemm}\label{lem3.7}
The map $G$ is $C^2$  with respect to $u$ and satisfies for all $h\in \tilde{H}^\sigma\cap\tilde{H}^{\sigma'}$
\begin{align*}
{\rm D}_uG(u)[h]=\partial_{u}F(t,x,u,\epsilon)h, \quad {\rm D}^2_uG(u)[h,h]=\partial^2_{u}F(t,x,u,\epsilon)h^2.
\end{align*}
\end{lemm}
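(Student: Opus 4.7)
The plan is to obtain both derivatives by pointwise Taylor expansion in the last argument of $F$, and to upgrade these pointwise identities to Fr\'{e}chet derivatives in $\tilde{\mathcal{H}}^{\sigma'}$ by combining the Moser-type composition estimate of Lemma \ref{lemma6.1} with the bilinear multiplication estimate of Lemma \ref{lemma6.2}. The vanishing conditions \eqref{E1.2} are what make Lemma \ref{lemma6.1} directly applicable to $\partial_u F$ and $\partial_u^2 F$.

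Concretely, I would fix $u \in \tilde{\mathcal{H}}^\sigma \cap \tilde{\mathcal{H}}^{\sigma'}$ and, for $h$ in the same space, write the Taylor formula with integral remainder
\begin{equation*}
F(t,x,u+h,\epsilon) - F(t,x,u,\epsilon) - \partial_u F(t,x,u,\epsilon)\, h = h^2 \int_0^1 (1-s)\, \partial_u^2 F(t,x,u+sh,\epsilon)\, \mathrm{d}s.
\end{equation*}
By \eqref{E1.2} the smooth function $z\mapsto \partial_u^2 F(t,x,z,\epsilon)$ vanishes at $z=0$, so Lemma \ref{lemma6.1} gives a bound of the form
\begin{equation*}
\|\partial_u^2 F(t,x,u+sh,\epsilon)\|_{\tilde{\mathcal{H}}^\sigma} \leq C\bigl(\|u\|_{L^\infty}+\|h\|_{L^\infty}\bigr)\bigl(\|u\|_{\tilde{\mathcal{H}}^\sigma}+\|h\|_{\tilde{\mathcal{H}}^\sigma}\bigr),
\end{equation*}
uniformly in $s\in[0,1]$. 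Combining this with Lemma \ref{lemma6.2}, which gives $\tilde{\mathcal{H}}^\sigma\cdot \tilde{\mathcal{H}}^{\sigma'}\subset \tilde{\mathcal{H}}^{\sigma'}$ for $\sigma'\in[-\sigma,\sigma]$, the $\tilde{\mathcal{H}}^{\sigma'}$-norm of the remainder is controlled by $C\|h\|_{\tilde{\mathcal{H}}^\sigma}\|h\|_{\tilde{\mathcal{H}}^{\sigma'}}$, which is $o(\|h\|_{\tilde{\mathcal{H}}^\sigma\cap\tilde{\mathcal{H}}^{\sigma'}})$. The same Moser bound applied to $\partial_u F$ (which also vanishes at $0$) shows that multiplication by $\partial_u F(t,x,u,\epsilon)$ is a bounded linear map on $\tilde{\mathcal{H}}^{\sigma'}$, thereby identifying $\mathrm{D}_u G(u)$ with the claimed operator.

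For the second derivative I would push the expansion one step further to obtain a cubic integral remainder
\begin{equation*}
h^3\int_0^1 \tfrac{(1-s)^2}{2}\, \partial_u^3 F(t,x,u+sh,\epsilon)\, \mathrm{d}s,
\end{equation*}
and estimate it in $\tilde{\mathcal{H}}^{\sigma'}$ by the same combination of Lemma \ref{lemma6.1} (applied to the smooth function $\partial_u^3 F$) and Lemma \ref{lemma6.2}; this identifies $\mathrm{D}_u^2 G(u)$ with the claimed symmetric bilinear map. Continuity of $u\mapsto \mathrm{D}_u G(u)$ and $u\mapsto \mathrm{D}_u^2 G(u)$ as operator-valued maps then follows from the smoothness of $u\mapsto \partial_u F(\,\cdot\,,u,\epsilon)$ and $u\mapsto \partial_u^2 F(\,\cdot\,,u,\epsilon)$ granted by Corollary \ref{corollary6.1}, together with the same product estimate.

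The main obstacle is the asymmetry between the regularity index $\sigma$ at which the composition estimate is natural and the target regularity $\sigma'$, since $\sigma'$ need not satisfy $\sigma'\geq \sigma_0$ (indeed $\sigma'>0$ is all that is assumed, and the second application of the lemmas with $\sigma' \in [-\sigma,\sigma]$ is key when later values of $\sigma'$ are small or negative). One cannot merely invoke the Banach algebra property of $\tilde{\mathcal{H}}^\sigma$; the extended product rule $\tilde{\mathcal{H}}^\sigma\cdot \tilde{\mathcal{H}}^{\sigma'}\subset \tilde{\mathcal{H}}^{\sigma'}$ from Lemma \ref{lemma6.2} is precisely what makes the Taylor remainders $o(\|h\|)$ and $o(\|h\|^2)$ in the correct space, so the bookkeeping of which factor is placed in which space is the only delicate point.
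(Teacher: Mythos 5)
Your route — quantitative Taylor remainders of order $2$ and $3$ plus Moser composition (Lemma \ref{lemma6.1}) plus the product estimate (Lemma \ref{lemma6.2}) — is valid in spirit and a bit stronger than the paper's, but it contains one genuine oversight. The paper instead writes the first-order remainder as $h\int_0^1\big(\partial_u F(\cdot,u+\tau h,\epsilon)-\partial_u F(\cdot,u,\epsilon)\big)\,\mathrm{d}\tau$, bounds the product using the Banach-algebra property at the level $\max\{\sigma,\sigma'\}$, and lets the factor in the integral go to zero by the \emph{continuity} of $u\mapsto\partial_u F(\cdot,u,\epsilon)$ guaranteed by Corollary \ref{corollary6.1}; that gives $o(\|h\|_{\tilde{\mathcal{H}}^{\max\{\sigma,\sigma'\}}})$ without ever invoking a third derivative of $F$. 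Your version goes one Taylor order further and yields an explicit $O(\|h\|^2)$ bound, which is nice (it implies Lipschitz-type estimates for $\mathrm{D}_u G$), but the cost is that the cubic remainder carries the coefficient $\partial_u^3 F(\cdot,u+sh,\epsilon)$, and \eqref{E1.2} only forces $\partial_z^k F(\cdot,0,\epsilon)\equiv 0$ for $k\leq 2$. Lemma \ref{lemma6.1} as stated requires vanishing at $z=0$, so it does \emph{not} apply directly to $\partial_u^3 F$; you need to split $\partial_u^3 F(\cdot,z,\epsilon)=\partial_u^3 F(\cdot,0,\epsilon)+\big(\partial_u^3 F(\cdot,z,\epsilon)-\partial_u^3 F(\cdot,0,\epsilon)\big)$, observe that the first term is a smooth $(t,x)$-function hence in every $\tilde{\mathcal{H}}^s$, and apply Lemma \ref{lemma6.1} only to the second. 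Without that remark your estimate for the coefficient is not justified.

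One more small point: your closing remark that one ``cannot merely invoke the Banach algebra property'' is not quite right. The paper does exactly that — working throughout at regularity $\max\{\sigma,\sigma'\}\geq\sigma_0>d/2+1$, where $\tilde{\mathcal{H}}^{\max\{\sigma,\sigma'\}}$ is a Banach algebra and embeds into $\tilde{\mathcal{H}}^{\sigma'}$. Lemma \ref{lemma6.2} is genuinely needed elsewhere in the paper (notably in Lemma \ref{Lemma3.2.1}, where negative indices $\sigma'$ appear), but in Lemma \ref{lem3.7} both indices are positive and the Banach-algebra bound already closes the argument. Your bilinear route gives a sharper constant, which does no harm, but the necessity claim should be dropped.
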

\begin{proof}
Corollary \ref{corollary6.1} implies that $G$ is $C^2$ respect to $u$. It follows from  the continuity property of $u\mapsto\partial_{u}F(t,x,u,\epsilon)$ that
\begin{align*}
\|F(t,x,u+h,\epsilon)
&-F(t,x,u,\epsilon)-\partial_{u}F(t,x,u,\epsilon)h\|_{\tilde{\mathcal{H}}^{\sigma'}}\\
&=\left\|h\int_0^1(\partial_{u}F(t,x,u+\tau h,\epsilon)-\partial_{u}F(t,x,u,\epsilon))~\mathrm{d}\tau\right\|_{\tilde{\mathcal{H}}^{\sigma'}}\\
&\leq C(\sigma')\|h\|_{\tilde{\mathcal{H}}^{\max{\{\sigma,\sigma'\}}}} \max_{\sigma\in[0,1]}
\left\|\partial_{u}F(t,x,u+\tau h,\epsilon)-\partial_{u}F(t,x,u,\epsilon)\right\|_{\tilde{\mathcal{H}}^{\max{\{\sigma,\sigma'\}}}}\\
&=o(\|h\|_{\tilde{\mathcal{H}}^{\max{\{\sigma,\sigma'\}}}}).
\end{align*}
 Therefore for all $h\in \tilde{H}^{\sigma}\cap \tilde{H}^{\sigma'}$, we have
\begin{equation*}
{\rm D}_uG(u)[h]=\partial_{u}F(t,x,u,\epsilon)h
\end{equation*}
and $u\mapsto {\rm D}_uG(u)$ is continuous. Furthermore, it also holds
\begin{align*}
&\partial_{u}F(t,x,u+\tau h,\epsilon)h-\partial_{u}F(t,x,u,\epsilon)h-\partial^2_{u}F(t,x,u,\epsilon)h^2\\
&=h^2\int_0^1(\partial^2_{u}F(t,x,u+\tau h,\epsilon)-\partial^2_{u}F(t,x,u,\epsilon))~\mathrm{d}\tau.
\end{align*}
Similarly, we can  obtain that $G$ is twice differentiable with respect to $u$ and $u\mapsto \mathrm{D}^2_uG(u)$ is continuous.
\end{proof}

\begin{lemm}\label{Lemma3.2.1}
Let $\sigma\geq\sigma_0$, $k\in\mathbb{N}$, $X$ and $Y$ be the open subsets of $\mathcal{H}^\sigma_1$ and  $\mathcal{H}^{\sigma+k}_2$ respectively.
If $G\in\Phi^{\infty, -k}(X,\mathcal{H}^{\sigma+k}_2)$, $\Phi\in C^{\infty, k}(Y,\mathbb{R})$, and $G(X)\subset Y$, then $\Phi\circ G\in
C^{\infty, 0}(X,\mathbb{R})$.
\end{lemm}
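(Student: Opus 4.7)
The plan is to compute $\nabla_u(\Phi\circ G)$ by the chain rule and then verify each clause of Definitions \ref{definition3.2.1}--\ref{definition3.2.2} with $k=0$, exploiting that $G$ gains $k$ derivatives while $\nabla_v\Phi$ loses $k$ derivatives, so the net shift is zero. Concretely, since $G$ is smooth from $X$ into $\mathcal{H}^{\sigma+k}_2\subset Y$ and $\Phi$ is $C^1$ on $Y$, the composition $\Phi\circ G$ is $C^1$ on $X$, and the identity ${\rm D}_u(\Phi\circ G)(u)[h]=\langle\nabla_v\Phi(G(u)),\,{\rm D}_uG(u)[h]\rangle_{L^2}$ together with the definition of the $L^2$-transpose identifies
\begin{equation*}
\nabla_u(\Phi\circ G)(u)={}^t\!({\rm D}_uG(u))\,\nabla_v\Phi(G(u)).
\end{equation*}

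Next I would check the Sobolev regularity. Fix $u\in X\cap\mathcal{H}^s_1$ with $s\geq\sigma$. The hypothesis $G\in\Phi^{\infty,-k}$ gives $G(u)\in\mathcal{H}^{s+k}_2$, so the hypothesis $\nabla_v\Phi\in\Phi^{\infty,k}$ yields $\nabla_v\Phi(G(u))\in\mathcal{H}^{(s+k)-k}_2=\mathcal{H}^s_2$. On the other hand, the extension of ${\rm D}_uG(u)$ at the index $\sigma'=-s\in[-s,s]$ lies in $\mathcal{L}(\mathcal{H}^{-s}_1,\mathcal{H}^{-s+k}_2)$, so its $L^2$-transpose belongs to $\mathcal{L}(\mathcal{H}^{s-k}_2,\mathcal{H}^{s}_1)$. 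Since $k\geq 0$ the inclusion $\mathcal{H}^s_2\hookrightarrow\mathcal{H}^{s-k}_2$ is continuous; applying the transpose yields $\nabla_u(\Phi\circ G)(u)\in\mathcal{H}^{s}_1$, which is the correct codomain for $k=0$ in Definition \ref{definition3.2.2}.

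Smoothness of $u\mapsto\nabla_u(\Phi\circ G)(u)$ from $X\cap\mathcal{H}^s_1$ into $\mathcal{H}^s_1$ then follows by composing the three smooth maps $u\mapsto G(u)$, $v\mapsto\nabla_v\Phi(v)$, and $u\mapsto {}^t\!({\rm D}_uG(u))$ supplied by the two classes $\Phi^{\infty,\pm k}$ (the last via transposition of the smooth operator-valued map provided by $G$). Differentiating the chain-rule formula once more produces a sum of two operator-valued expressions, of the shapes ${}^t\!({\rm D}^2_uG(u)[h,\cdot])\nabla_v\Phi(G(u))$ and ${}^t\!({\rm D}_uG(u))\,{\rm D}^2_v\Phi(G(u))[{\rm D}_uG(u)[h],\cdot]$, each a composition whose mapping properties are controlled by the extensions guaranteed in Definitions \ref{definition3.2.1}--\ref{definition3.2.2}.

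The main obstacle will be this second-derivative bookkeeping: verifying that the iterated composition lands in the asymmetric bilinear class $\mathcal{L}_2(\mathcal{H}^{\sigma_1}_1\times\mathcal{H}^{\sigma_2}_1,\mathcal{H}^{-\sigma_3}_1)$ with $\{\sigma_1,\sigma_2,\sigma_3\}=\{\sigma',-\sigma',\max(\sigma_0,\sigma')\}$ and $\sigma'\in[0,s]$. For each placement of the test arguments $h_1,h_2$ one must choose the intermediate extension parameter in $[-s,s]$ so that the ``gain $k$'' from $G$ and the ``loss $k$'' from $\nabla_v\Phi$ cancel, while the $\max(\sigma_0,\cdot)$-type losses occurring in the bilinear estimates (compare Lemma \ref{lemma6.2}) are absorbed into $\sigma_3=\max(\sigma_0,\sigma')$.
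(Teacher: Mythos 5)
Your proof follows the same route as the paper: compute $\nabla_u(\Phi\circ G)(u)={}^t({\rm D}_uG(u))\cdot\nabla_v\Phi(G(u))$, observe that the $k$-smoothing of $G$ cancels the $k$-loss of $\nabla_v\Phi$ so that the composite gradient lands in $\mathcal{H}^s_1$, then differentiate and verify the extension clauses of Definitions \ref{definition3.2.1}--\ref{definition3.2.2}. The second-derivative bookkeeping that you flag as the ``main obstacle'' is precisely what occupies most of the paper's proof: it pairs ${\rm D}^2_u(\nabla_u(\Phi\circ G)(u))\cdot(h_1,h_2)$ against $h_3$, expands into the four integral terms \eqref{E3.2.8a}--\eqref{E3.2.8d} (involving ${\rm D}^3_uG$, ${\rm D}^2_uG$, ${\rm D}_uG$ and the corresponding derivatives of $\nabla_v\Phi$), and checks each is a finite $L^2$-pairing for $h_1\in\mathcal{H}^{\sigma'}_1$, $h_2\in\mathcal{H}^{-\sigma'}_1$, $h_3\in\mathcal{H}^{\max(\sigma_0,\sigma')}_1$ with $\sigma'\in[0,s]$, exactly as you anticipate.
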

\begin{proof}
We restrict our attention to $u\in X\cap\mathcal{H}^s_1$ with $s\geq\sigma$,
which reads $G(u)\in Y\cap\mathcal{H}^{s+k}_2$. Definitions \ref{definition3.2.1}-\ref{definition3.2.2} indicate that
\begin{equation}\label{E3.2.1}
{\rm D}_uG(u)\in\mathcal{L}(\mathcal{H}^{\sigma'}_1,\mathcal{H}^{\sigma'+k}_2)\subset \mathcal{L}(\mathcal{H}^{\sigma'}_1,\mathcal{H}^{\sigma'}_2)~\text{ for}~|\sigma'|\leq s
\end{equation}
and that $\nabla_u\Phi(G(u))\in\mathcal{H}^s_2$ for $s\geq\sigma$.
Consequently, we have for any $\sigma'$ with $|\sigma'|\leq s$,
\begin{equation}\label{E3.2.3}
{\rm D}_u(\nabla_u\Phi(G(u)))\in\mathcal{L}(\mathcal{H}^{\sigma'+k}_2,\mathcal{H}^{\sigma'}_2).
\end{equation}
It follows from  formula \eqref{E3.2.1} together with  the fact that $\nabla_u(\Phi\circ G)(u)$ is equal to ${^t{\rm D}_uG(u)}\cdot(\nabla_u\Phi(G(u)))$, we deduce
 \begin{equation*}
 \nabla_u(\Phi\circ G)(u)\in\mathcal{H}^s_1.
  \end{equation*}
Let us check $\nabla(\Phi\circ G)\in\Phi^{\infty,0}(X,\mathcal{H}^\sigma_1)$. Write ${\rm D}_u(\nabla_u(\Phi\circ G)(u))\cdot h$ as the sum of the following two terms
\begin{subequations}
\begin{numcases}{}
\label{E3.2.5a}
{^t{\rm D}_uG(u)}\cdot(({\rm D}_u\nabla_u\Phi)(G(u))\cdot {\rm D}_uG(u)\cdot h),\\
\label{E3.2.5b}
({\rm D}_u({^t\partial_uG})(u)\cdot h)\cdot\nabla_u\Phi(G(u)).
\end{numcases}
\end{subequations}
Formulae \eqref{E3.2.1} and \eqref{E3.2.3} shows that \eqref{E3.2.5a} belongs to $\mathcal{H}^{\sigma'}_1$ with $\sigma'\in[0,s]$.
According to integrating \eqref{E3.2.5b} against $h'\in\mathcal{H}^{-\sigma'}_1$, it yields that
\begin{equation}\label{E3.2.6}
\int\left(({\rm D}_u({^t{\rm D}_uG})(u)\cdot h)\cdot\nabla\Phi_u(G(u))\right)h'~{\rm d}t {\rm d}x=\int\nabla_u\Phi(G(u))\cdot {\rm D}_u^2G(u)\cdot(h,h')~{\rm d}t {\rm d}x.
\end{equation}
Definition \ref{definition3.2.1} gives that
${\rm D}_u^2G(u)\cdot(h,h')\in\mathcal{H}^{-\max\{\sigma_0,\sigma'\}+k}_2.$
Combining this with the fact that $\nabla_u\Phi(G(u))$ is in $\mathcal{H}^s_2$ (contained in $\mathcal{H}^{\max\{\sigma_0,\sigma'\}}_2$), thus
we get that the right hand side of \eqref{E3.2.6} is a continuous linear form with  $h'\in\mathcal{H}^{-\sigma'}_1$.

Next, from integrating ${\rm D}_u^2\big(\nabla_u(\Phi\circ G)(u)\big)\cdot (h_1,h_2)$  with $(h_1,h_2)\in\mathcal{H}^{\sigma_4}_1\times\mathcal{H}^{\sigma_5}_1$ against $h_3\in\mathcal{H}^{\sigma_6}_1$, it follows that
\begin{equation}\label{E3.2.7}
\int\left({\rm D}_u^2(\nabla_u(\Phi\circ G)(u))\cdot
(h_1,h_2)\right)h_3~ {\rm d}t {\rm d}x
={\rm D}_u^2\int\left(\nabla_u\Phi(G(u))\right)\left({\rm D}_uG(u)\cdot h_3\right)~{\rm d}t {\rm d}x,
\end{equation}
 where $\{\sigma_4,\sigma_5,\sigma_6\}=\{\sigma', -\sigma', \max\{\sigma_0,\sigma'\}\}$ with $\sigma'\in[0,s]$. The right-hand side of \eqref{E3.2.7} is the sum of the following four terms
\begin{subequations}
\begin{numcases}{}
\label{E3.2.8a}
{\textstyle \int} \left(\nabla_u\Phi(G(u))\right)\left({\rm D}_u^3G(u)\cdot(h_1,h_2,h_3)\right){\rm d}t {\rm d}x,\\
\label{E3.2.8b}
{\textstyle \int}\left({\rm D}_u(\nabla_u\Phi(G(u)))\cdot h_1 \right) \left({\rm D}_u^2G(u)\cdot(h_2,h_3) \right){\rm d}t {\rm d}x,\\
\label{E3.2.8c}
{\textstyle \int}\left(({\rm D}_u\nabla_u\Phi)(G(u))\cdot {\rm D}_u^2G(u)\cdot(h_1,h_2) \right) \left({\rm D}_uG(u)\cdot h_3 \right){\rm d}t {\rm d}x,\\
\label{E3.2.8d}
{\textstyle \int}\left(({\rm D}_u^2\nabla\Phi)(G(u))\cdot({\rm D}_uG(u)\cdot h_1,{\rm D}_uG(u)\cdot h_2)\right) \left({\rm D}_uG(u)\cdot h_3 \right){\rm d}t {\rm d}x,
\end{numcases}
\end{subequations}
where $(h_1,h_2)\in\mathcal{H}^{\sigma_4}_1\times\mathcal{H}^{\sigma_5}_1$.
We just consider $h_1\in\mathcal{H}^{\sigma'}_1$, $h_2\in\mathcal{H}^{-\sigma'}_1$
and $h_3\in\mathcal{H}^{\max\{\sigma_0,\sigma'\}}_1$ with $\sigma'\in[0,s]$.
In \eqref{E3.2.8a}, since $u\rightarrow {\rm D}_u^2G(u)$ is $C^1$ on $X\cap\mathcal{H}^{\max\{\sigma_0,\sigma'\}}_1$
with values in $\mathcal{L}_2(\mathcal{H}^{\sigma'}_1\times\mathcal{H}^{-\sigma'}_1;
\mathcal{H}^{-\max\{\sigma_0,\sigma'\}+k}_2)$, we obtain
\begin{equation*}
{\rm D}_u^3G(u)\cdot(h_1,h_2,h_3)\in\mathcal{H}^{-\max\{\sigma_0,\sigma'\}+k}_2.
\end{equation*}
Combing this with $\nabla_u\Phi(G(u))\in\mathcal{H}^s_2\subset\mathcal{H}^{\max\{\sigma_0,\sigma'\}}_2$ for $s\geq\sigma'\geq0$ and $s\geq\sigma$,
the two factors in \eqref{E3.2.8a} are integrable. In \eqref{E3.2.8b}, Definitions \ref{definition3.2.1}-\ref{definition3.2.2} verify
\begin{equation*}
{\rm D}_u^2G(u)\cdot(h_2,h_3)\in\mathcal{H}^{-\max\{\sigma_0,\sigma'\}+k}_2\subset\mathcal{H}^{-\sigma'+k}_2, ~{\rm D}_u(\nabla_u\Phi(G(u)))\cdot h_1\in\mathcal{H}^{\sigma_1}_2.
\end{equation*}
Consequently, the two factors in \eqref{E3.2.8b} are integrable.
In \eqref{E3.2.8c}, formula \eqref{E3.2.1} and Definitions \ref{definition3.2.1}-\ref{definition3.2.2} lead to
\begin{equation*}
{\rm D}_uG(u)\cdot h_3\in\mathcal{H}^{-\max\{\sigma_0,\sigma'\}+k}_2,
\end{equation*}
and
\[({\rm D}_u\nabla_u\Phi)(G(u))\cdot {\rm D}_u^2G(u)\cdot(h_1,h_2)\in\mathcal{H}^{-\max\{\sigma_0,\sigma'\}+k}_2,\]
which implies that the two factors in \eqref{E3.2.8c} are integrable.
In \eqref{E3.2.8d}, from ${\rm D}_uG(u)\cdot h_1\in\mathcal{H}^{\sigma'+k}_2\subset\mathcal{H}^{\sigma'}_2$ and ${\rm D}_uG(u)\cdot h_2\in\mathcal{H}^{-\sigma'+k}_2\subset\mathcal{H}^{-\sigma'}_2$, it follows that
\begin{equation*}
({\rm D}_u^2\nabla\Phi)(G(u))\cdot({\rm D}_uG(u)\cdot h_1,{\rm D}_uG(u)\cdot h_2) \in \mathcal{H}^{-\max\{\sigma_0,\sigma'\}-k}_2.
\end{equation*}
As a result, the two factors in \eqref{E3.2.8d} are integrable. This completes the proof.
\end{proof}

\subsection{An equivalent form}
Since $u$ is real-valued,  define the functionals $\Phi_1(u,f,\omega,\epsilon)$, $\Phi_2(u,\epsilon)$ by
\begin{align}\label{E3.2.10}
&\Phi_1(u,f,\omega,\epsilon):=\frac{1}{2}\int_{\mathbb{T}\times\mathbb{T}^d}(\tilde{L}_\omega u(t,x))u(t,x)~{\rm d}t{\rm d}x+\epsilon\int_{\mathbb{T}\times\mathbb{T}^d} f(t,x)u(t,x)~{\rm d}t{\rm d}x,
\quad &~u\in\tilde{\mathcal{H}}^\sigma,\\
&\Phi_2(u,\epsilon):=\int_{\mathbb{T}\times\mathbb{T}^d}F(t,x,u(t,x),\epsilon)~{\rm d}t{\rm d}x, \quad &~u\in\tilde{\mathcal{H}}^\sigma,\nonumber
\end{align}
where
\begin{equation}\label{E3.2.9}
\tilde{L}_\omega=-(\omega^2\partial_{tt}+\Delta^2+m).
\end{equation}
Then
\begin{equation*}
\nabla_u\Phi_1(u,f,\omega,\epsilon)=\tilde{L}_\omega u+\epsilon f, \quad  \nabla_u\Phi_2(u,\epsilon)=\mathrm{D}_u F(u,\epsilon).
 \end{equation*}
It follows from  the definition of $\tilde{\mathcal{H}}^{\sigma}$ that $\tilde{L}_\omega$ is a  bounded operator from $\tilde{\mathcal{H}}^{\sigma}$ to $\tilde{\mathcal{H}}^{\sigma-2}$, which shows $\Phi_1\in C^{\infty,2}(\tilde{\mathcal{H}}^\sigma,\mathbb{R})$ for $\sigma\geq\sigma_0$. Moreover, we also deduce $\Phi_2\in C^{\infty,0}(\tilde{\mathcal{H}}^\sigma,\mathbb{R})$ for $\sigma\geq\sigma_0$ from the condition \eqref{E1.2} and Lemmas \ref{lemma6.1}-\ref{lemma6.2}, Corollary \ref{corollary6.1}.
Then Eq. \eqref{E2.1.2} may be written as
\begin{equation}\label{E3.2.11}
\nabla_u(\Phi_1(u,f,\omega,\epsilon)+\epsilon\Phi_2(u,\epsilon))=0.
\end{equation}

 According to the fact of $m>0$ and formula  \eqref{E2.2.7}, it follows that, for $n\in\Omega_\alpha$ with $\alpha\in\mathcal{A}$,
 \begin{align*}
 {|n|}^4+m\geq
 \begin{cases}
  m(|n|^4+1)=\frac{m}{4}(4|n|^4+4)\geq\frac{m}{4}(|n|^4+2|n|^2+1)\geq\frac{m}{4}\Theta_0^{-4}\langle n(\alpha)^4, \quad &\text {when}~0<m\leq1,\\
  {|n|}^4+1=\frac{1}{4}(4|n|^4+4)\geq\frac{1}{4}\Theta_0^{-4}\langle n(\alpha)^4, \quad & \text {when}~m>1,
 \end{cases}
\end{align*}
and
\begin{align*}
 {|n|}^4+m\leq
 \begin{cases}
 |n|^4+1\leq|n|^4+2|n|^2+1\leq\Theta_0^{4}\langle n(\alpha)^4, \quad &\text {when}~0<m\leq1,\\
  m({|n|}^4+1)\leq m \Theta_0^{4}\langle n(\alpha)^4, \quad & \text {when}~m>1,
 \end{cases}
\end{align*}
 In addition, denote by $\mathcal{F}^\sigma$ the orthogonal complement of $\mathcal{H}^\sigma$. Owing to the definition of $\mathcal{H}^\sigma$, if $u\in\mathcal{F}^\sigma$,  then non vanishing $\hat{u}(j,n)$ for $(j,n)\in\mathbb{Z}\times\Omega_\alpha$  satisfy $|j|>K_0 \langle n(\alpha)\rangle^2$ or $|j|<K^{-1}_0 \langle n(\alpha)\rangle^2$. On one hand, if $(j,n)\in\mathbb{Z}\times\Omega_\alpha$ with $|j|>K_0 \langle n(\alpha)\rangle^2$, then we have for $\omega\in[1,2]$
 \begin{align*}
 |-\omega^2j^2+{|n|}^4+m|&\geq j^2-(|n|^4+m)>\frac{j^2}{2}+\frac{K^2_0}{4}\langle n(\alpha)\rangle^4+\frac{K^2_0}{4}\langle n(\alpha)\rangle^4- m \Theta_0^{4}\langle n(\alpha)^4\\
 &\geq\min\{\frac{1}{2},m\Theta^4_0\}(j^2+n(\alpha)^4),
 \end{align*}
 where $K_0\geq2\sqrt{m}\Theta^2_0$. On the other hand,  the following inequality holds, for $(j,n)\in\mathbb{Z}\times\Omega_\alpha$ with $|j|<K^{-1}_0 \langle n(\alpha)\rangle^2$,
 \begin{align*}
 |-\omega^2j^2+{|n|}^4+m|&\geq(|n|^4+m)-4j^2>\frac{m}{8\Theta_0^{4}}\langle n(\alpha)\rangle^4+\frac{m}{16\Theta_0^{4}}\langle n(\alpha)\rangle^4+\frac{m}{16\Theta_0^{4}}\langle n(\alpha)\rangle^4-\frac{4}{K^{2}_0} \langle n(\alpha)\rangle^4\\
 &\geq\min\{8,\frac{m}{16\Theta_0^{4}}\}(j^2+n(\langle\alpha\rangle)^4),
 \end{align*}
  where $K_0\geq\frac{8}{\sqrt{m}}\Theta^2_0$. As a consequence, if $K_0$ is greater than or equal to $\max\{2\sqrt{m},\frac{8}{\sqrt{m}}\}\Theta^2_0$, then there exists a constant $c(m)>0$ depending on $m$ such that the eigenvalues of $\tilde{L}_\omega$ satisfy for all $\omega\in[1,2]$
\begin{equation}\label{E2.10}
|-\omega^2j^2+{|n|}^4+m|\geq c(m)(|j|^2+\langle n(\alpha)\rangle^4), \quad \forall j\in\mathbb{Z}, \forall n\in\Omega_{\alpha}~\text{with}~\alpha\in\mathcal{A}.
\end{equation}
Before reducing \eqref{E3.2.11} to an equivalent form on $\mathcal{H}^\sigma$, $u,f$ are decomposed as $u_{1}+u_{2},f_1+f_2$, respectively, where  $u,f\in\tilde{\mathcal{H}}^\sigma$, $u_{1},f_1\in\mathcal{H}^\sigma$ and $u_{2},f_2\in \mathcal{F}^\sigma$.
\begin{prop}\label{proposition3.2.1}
Set $\sigma\geq\sigma_0$, $q>0$, $f_1\in B_q(\mathcal{H}^\sigma)$, $W_{q}:=B_q(\mathcal{H}^\sigma)
\times B_q(\mathcal{F}^\sigma)$. There exist $\gamma_0\in(0,1]$ small enough,
 an element
 $(u_1,f_2)\rightarrow\Psi_2(u_1,f_2,\omega,\epsilon)$ of $C^{\infty,0}(W_{q};\mathbb{R})$ and an element $(u_1,f_2)\rightarrow G(u_1,f_2,\omega,\epsilon)$ of $\Phi^{\infty,-2}(W_{q};\mathcal{F}^{\sigma+2})$,  are $C^1$ in $(\omega,\epsilon)\in[1,2]\times[0,\gamma_0]$, such that for any given subset $\mathcal{B}\subset[1,2]\times[0,\gamma_0]$,
 the following two conditions are equivalent, i.e.
\\
{(i)}~For any $(\omega,\epsilon)\in \mathcal{B}$, the function $u=(u_1,G(u_1,f_2,\omega,\epsilon))$ satisfies
\begin{equation}\label{E3.2.12}
\tilde{L}_\omega u+\epsilon f+\epsilon\nabla_u\Phi_2(u,\epsilon)=0;
\end{equation}
{(ii)}~For any $(\omega,\epsilon)\in \mathcal{B}$, the function $u_1$ satisfies
\begin{equation}\label{E3.2.13}
\tilde{L}_\omega u_1+\epsilon f_1+\epsilon\nabla_{u_1}\psi_2(u_1,f_2,\omega,\epsilon)=0.
\end{equation}
\end{prop}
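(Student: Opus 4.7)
The approach is a Lyapunov--Schmidt reduction along the orthogonal decomposition $\tilde{\mathcal{H}}^\sigma = \mathcal{H}^\sigma \oplus \mathcal{F}^\sigma$. Since $\tilde L_\omega$ is a Fourier multiplier it preserves both summands, and \eqref{E2.10} together with \eqref{E2.2.7} gives the uniform lower bound $\left|-\omega^2 j^2 + |n|^4 + m\right| \geq c'(m)(1+j^2+|n|^4)$ on every Fourier mode appearing in $\mathcal F^\sigma$. Consequently the restriction of $\tilde L_\omega^{-1}$ to $\mathcal F^\sigma$ defines a bounded map $\mathcal F^\sigma \to \mathcal F^{\sigma+2}$ uniformly in $\omega \in [1,2]$; this two-derivative gain is the engine of the whole argument.

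Writing $u=u_1+u_2$, $f=f_1+f_2$ and letting $Q$ denote the projector onto $\mathcal F^\sigma$, the $Q$-component of \eqref{E3.2.12} becomes the fixed-point equation
\[u_2 = T(u_2; u_1, f_2, \omega, \epsilon) := -\epsilon\, \tilde L_\omega^{-1} Q\bigl[f_2 + \nabla_u\Phi_2(u_1+u_2,\epsilon)\bigr].\]
By Lemma~\ref{lem3.7} and \eqref{E1.2}, $\nabla_u\Phi_2(u)=\partial_u F(t,x,u,\epsilon)$ vanishes at order two in $u$; combined with the smoothing from $\tilde L_\omega^{-1}$, this makes $T$ a $\tfrac{1}{2}$-contraction on a small ball of $\mathcal F^\sigma$ uniformly for $(u_1,f_2,\omega,\epsilon)\in W_q\times[1,2]\times[0,\gamma_0]$, provided $\gamma_0$ is small enough. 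The parametric Banach fixed point theorem then produces $G(u_1,f_2,\omega,\epsilon)\in\mathcal F^{\sigma+2}$. The membership $G\in\Phi^{\infty,-2}(W_q;\mathcal F^{\sigma+2})$ and the $C^1$ dependence on $(\omega,\epsilon)$ follow from applying the implicit function theorem at every Sobolev scale $s\geq\sigma$: since $D_{u_2}T=O(\epsilon)$, the operator $I-D_{u_2}T$ is invertible on $\mathcal F^s$ by Neumann series, and differentiating the identity $G=T(G;\cdots)$ yields explicit formulas for the higher Fréchet derivatives, on which Lemmas~\ref{lemma6.1}--\ref{lemma6.2} and Corollary~\ref{corollary6.1} provide the tame bilinear and trilinear bounds demanded by Definition~\ref{definition3.2.1}.

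To obtain the reduced functional, substitute $u=u_1+G$ into $\Phi_1+\epsilon\Phi_2$. Because $\tilde L_\omega$ preserves the orthogonal decomposition, direct computation gives
\[(\Phi_1+\epsilon\Phi_2)(u_1+G,f,\omega,\epsilon) = \Phi_1(u_1,f_1,\omega,\epsilon) + \tfrac{1}{2}\langle\tilde L_\omega G,G\rangle + \epsilon\langle f_2,G\rangle + \epsilon\Phi_2(u_1+G,\epsilon).\]
Since $D_{u_1}G$ takes values in $\mathcal F$ and $Q\nabla_u(\Phi_1+\epsilon\Phi_2)(u_1+G)=0$ by definition of $G$, the chain rule yields $\nabla_{u_1}[(\Phi_1+\epsilon\Phi_2)(u_1+G)] = \tilde L_\omega u_1 + \epsilon f_1 + \epsilon P\nabla_u\Phi_2(u_1+G)$. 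Matching with \eqref{E3.2.13} then dictates
\[\Psi_2(u_1,f_2,\omega,\epsilon) := \frac{1}{2\epsilon}\langle\tilde L_\omega G,G\rangle + \langle f_2,G\rangle + \Phi_2(u_1+G,\epsilon).\]
The apparent pole at $\epsilon=0$ is removable: the fixed-point equation gives $G = \epsilon H$ with $H$ smooth in $\epsilon$, so $\tfrac{1}{\epsilon}\langle\tilde L_\omega G,G\rangle = \epsilon\langle\tilde L_\omega H,H\rangle$. Lemma~\ref{Lemma3.2.1} then yields $\Psi_2\in C^{\infty,0}(W_q;\mathbb R)$.

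Equivalence of (i) and (ii) is now immediate: if $u$ satisfies (i), the $\mathcal F^\sigma$-projection forces $u_2 = G(u_1,f_2,\omega,\epsilon)$ by uniqueness of the fixed point, and the $\mathcal H^\sigma$-projection of (i) is precisely \eqref{E3.2.13}; conversely, given $u_1$ satisfying (ii), setting $u := u_1 + G$ makes both projections of \eqref{E3.2.12} hold. The main obstacle I anticipate is the rigorous verification of the smoothness classes $\Phi^{\infty,-2}$ and $C^{\infty,0}$ across the full scale of negative-index Sobolev norms demanded by Definitions~\ref{definition3.2.1}--\ref{definition3.2.2}; this requires careful bookkeeping to check that the two-derivative gain of $\tilde L_\omega^{-1}$ truly compensates the losses coming from the second and third Fréchet derivatives of $\Phi_2$ at each scale.
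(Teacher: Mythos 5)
Your proof is correct and mirrors the paper's own strategy: split $u,f$ along $\mathcal H^\sigma\oplus\mathcal F^\sigma$, solve the $\mathcal F^\sigma$-projection of \eqref{E3.2.12} by a parametric contraction using the two-derivative gain of $\tilde L_\omega^{-1}$ on $\mathcal F^\sigma$ guaranteed by \eqref{E2.10}, control the Fr\'echet derivatives via a Neumann series to land in $\Phi^{\infty,-2}$, and substitute the fixed point into $\Phi_1+\epsilon\Phi_2$ to obtain the reduced functional. The one difference is bookkeeping rather than substance: the paper first peels off the linear term, writing $u_2=-\epsilon\tilde L_\omega^{-1}f_2+\epsilon w_2$, and then applies the fixed point theorem to $w_2$ (which it names $G$), so its $\psi_2=\tfrac{\epsilon}{2}\int G(\tilde L_\omega G)+\Phi_2(u_1,-\epsilon\tilde L_\omega^{-1}f_2+\epsilon G,\epsilon)$ is visibly regular at $\epsilon=0$; you instead solve directly for the full $\mathcal F^\sigma$-component $u_2=G$ and then remove the apparent $1/\epsilon$ pole in $\Psi_2$ by the observation $G=\epsilon H$. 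Both are valid, the resulting functionals differ only by a term independent of $u_1$ (hence irrelevant to $\nabla_{u_1}$), and in fact your choice matches the notation $u=(u_1,G)$ in the statement more literally than the paper's own proof does.
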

\begin{proof}
Eq. \eqref{E3.2.12} may be written as the following system
\begin{subequations}
\begin{numcases}{}
\label{E3.2.14a}
\tilde{L}_\omega u_{1}+\epsilon f_1+\epsilon\nabla_{u_{1}}\Phi_2(u_{1},u_{2},\epsilon)=0,\\
\label{E3.2.14b}
\tilde{L}_\omega u_{2}+\epsilon f_2+\epsilon\nabla_{u_{2}}\Phi_2(u_{1},u_{2},\epsilon)=0.
\end{numcases}
\end{subequations}
Formula \eqref{E2.10} reads that the restriction of $\tilde{L}_\omega$ on $\mathcal{F}^{\sigma}$ is an invertible operator from $\mathcal{F}^{\sigma}$ to $\mathcal{F}^{\sigma-2}$. Then the solution of \eqref{E3.2.14b} may be expressed in terms of the form $u_2=-\epsilon \tilde{L}^{-1}_\omega f_2+\epsilon w_2$, where
\begin{equation}\label{E3.2.15}
w_{2}=-\tilde{L}^{-1}_\omega\nabla_{u_{2}}\Phi_2(u_{1},-\epsilon \tilde{L}^{-1}_\omega f_2+\epsilon w_{2},\epsilon).
\end{equation}
For all $(u_1,h)\in B_q(\mathcal{H}^\sigma)\times B_q(\mathcal{F}^\sigma)$, all $ (\omega,\epsilon)\in[1,2]\times[0,1]$, we have
\begin{equation*}
\begin{array}{l}
\|\tilde{L}^{-1}_\omega\nabla_{u_2}\Phi_2(u_1,h,\epsilon)\|_{\mathcal{F}^{\sigma+2}}\leq \frac{q_1}{2}
\end{array}
\end{equation*}
for some constant $q_1>0$. By means of the fixed point theorem with parameters, there exists $\gamma_0\in(0,1]$, such that for any $(u_1,f_2)\in W_q$, any $\epsilon\in(0,\gamma_0]$, Eq. \eqref{E3.2.15} has a unique solution $w_{2}\in B_{q_1}(\mathcal{F}^{\sigma+2})$, which is denoted by $G(u_1,f_2,\omega,\epsilon)$. As a consequence
\begin{equation}\label{E3.16}
u_{2}=-\epsilon \tilde{L}^{-1}_\omega f_2+\epsilon G.
\end{equation}
Let us verify that $G\in\Phi^{\infty,-2}(W_q;\mathcal{F}^{\sigma+{2}})$. Formula \eqref{E3.2.15} indicates that $G$ is a smooth function of $u_1$ with $C^1$ dependence on $(\omega,\epsilon)$ and that
$G$ belongs to $\mathcal{F}^{s+{2}}$ for all$(u_1,f_2)\in W_q\cap\tilde{\mathcal{H}}^s$ with $s\geq\sigma$.
Furthermore
\begin{align*}
&{\rm D}_{u_1}G(u_1,f_2,\omega,\epsilon)=-\tilde{L}^{-1}_\omega({\rm Id}-\epsilon M_2(u_1,f_2,\omega,\epsilon)\tilde{L}^{-1}_\omega)^{-1}M_1(u_1,f_2,\omega,\epsilon),\\
&{\rm D}_{f_2}G(u_1,f_2,\omega,\epsilon)=\epsilon\tilde{L}^{-1}_\omega({\rm Id}-\epsilon M_2(u_1,f_2,\omega,\epsilon)\tilde{L}^{-1}_\omega)^{-1}M_2(u_1,f_2,\omega,\epsilon)\tilde{L}_\omega^{-1},
\end{align*}
 where
\begin{align*}
&M_1(u_1,f_2,\omega,\epsilon)=({\rm D}_{u_1}\nabla_{u_2}\Phi_2)(u_1,-\epsilon \tilde{L}^{-1}_\omega f_2+\epsilon G,\epsilon ),\\
&M_2(u_1,f_2,\omega,\epsilon)=-({\rm D}_{u_2}\nabla_{u_2}\Phi_2)(u_1,-\epsilon \tilde{L}^{-1}_\omega f_2+\epsilon G,\epsilon ).
\end{align*}
We restrict ourselves to $(u_1,f_2)\in W_q\cap\tilde{\mathcal{H}}^s$ for $s\geq\sigma$. The fact of
$\Phi_2\in C^{\infty,0}(W_q,\mathbb{R})$ gives that $M_1(u_1,f_2,\omega,\epsilon)$ $({\rm resp}.$
$M_2(u_1,f_2,\omega,\epsilon))$ sends $\mathcal{H}^{\sigma'}$ $({\rm resp}.\mathcal{F}^{\sigma'})$ to $\mathcal{F}^{\sigma'}$ for any $\sigma'\in[-s,s]$.
Choose $\gamma_0$ small enough to ensure
\begin{equation*}
 \epsilon \|M_2(u_1,f_2,\omega,\epsilon)\tilde{{L}}^{-1}_\omega\|_{\mathcal{L}(\mathcal{F}^{\sigma}
,\mathcal{F}^{\sigma})}\leq 1/2, ~{\rm for}~\epsilon\in[0,\gamma_0].
\end{equation*}
This gives rise to
\begin{equation*}
({\rm Id}-\epsilon M_2\tilde{{L}}^{-1}_\omega)^{-1}\in\mathcal{L}(\mathcal{F}^{\sigma},\mathcal{F}^{\sigma}),
\end{equation*}
which then leads to that
$\mathrm{D}_{u_1}G$  can be written as the sum of the following two terms
\begin{subequations}
\begin{numcases}{}
\label{E3.2.18a}
-\sum_{k=0}^{2N-1}\tilde{L}^{-1}_\omega(\epsilon M_2\tilde{L}^{-1}_\omega)^kM_1,\\
\label{E3.2.18b}
-\tilde{L}^{-1}_\omega(\epsilon M_2\tilde{L}^{-1}_\omega)^N({\rm Id}-\epsilon M_2\tilde{L}^{-1}_\omega)^{-1}(\epsilon M_2\tilde{L}^{-1}_\omega)^{N}M_1.
\end{numcases}
\end{subequations}
If $N$ is chosen large enough relatively to $s$, then $(\epsilon M_2\tilde{{L}}^{-1}_\omega)^{N}M_1$ sends $\mathcal{H}^{\sigma'}$ to $\mathcal{F}^{\sigma}$
for any $\sigma'\in[-s,s]$. Then \eqref{E3.2.18b} belongs to $\mathcal{F}^{s+{2}}\subset \mathcal{F}^{\sigma'+{2}}$.
Moreover, \eqref{E3.2.18a} is bounded from $\mathcal{H}^{\sigma'}$ to $\mathcal{F}^{\sigma'+{2}}$ for any $\sigma'\in[-s,s]$.
Therefore ${\rm D}_{u_1}G$ extends as an element of $\mathcal{L}(\mathcal{H}^{\sigma'},\mathcal{F}^{\sigma'+{2}})$ for any $\sigma'\in[-s,s]$. The discussion on ${\rm D}_{f_2}G$, $\mathrm{D}^2G$ is similar to the one as above and so is omitted. Clearly, $\mathrm{D} G,\mathrm{D}^2G$  are smooth with $C^1$ dependence on $(\omega,\epsilon)$.
Consequently, $G$ is in $\Phi^{\infty,-{2}}(W_q;\mathcal{F}^{\sigma+{2}})$. Owing to \eqref{E3.2.9} and \eqref{E3.2.3}, it follows that
\begin{align*}
\Phi_1(u_1,u_2,\omega,\epsilon)+\epsilon\Phi_2(u_1,u_2,\epsilon)
=&\frac{1}{2}\int(\tilde{L}_\omega u_1)u_1~{\rm d}t{\rm d}x+\epsilon\int f_1u_1~{\rm d}t{\rm d}x
\\&+\frac{1}{2}\int(\tilde{L}_\omega u_2)u_2~{\rm d}t{\rm d}x+\epsilon\int f_2u_2~{\rm d}t{\rm d}x+\epsilon \Phi_2(u_1,u_2,\epsilon).
\end{align*}
Substituting \eqref{E3.16} into the above expression,  we can get a new functional about $(u_1,f_2,\omega,\epsilon)$, which is denoted by $\Psi(u_1,f_2,\omega,\epsilon)$.
A simple calculation yields
\begin{align*}
\Psi(u_1,f_2,\omega,\epsilon)
=\frac{1}{2}&\int(\tilde{L}_\omega u_1)u_1~{\rm d}t{\rm d}x+\epsilon\int f_1u_1~{\rm d}t{\rm d}x\\
&-\frac{\epsilon^2}{2}\int(\tilde{L}^{-1}_\omega f_2)f_2~{\rm d}t{\rm d}x
+\epsilon\psi_2(u_1,f_2,\omega,\epsilon),
\end{align*}
where
\begin{align}\label{e1.2}
\psi_2(u_1,f_2,\omega,\epsilon)=\frac{\epsilon}{2}\int G(\tilde{L}_\omega G)~\mathrm{d}t\mathrm{d}x+\Phi_2(u_1,-\epsilon \tilde{L}^{-1}_\omega f_2+\epsilon G,\epsilon).
\end{align}
The first term in the right hand side of \eqref{e1.2} belongs to $C^{\infty,2}(\mathcal{F}^{\sigma+2},\mathbb{R})$ thanks to that $\tilde{L}_\omega$ is a bounded operator from $\mathcal{F}^{\sigma+2}$ to $\mathcal{F}^{\sigma}$.
 It follows from  Lemma \ref{Lemma3.2.1} that $\psi_2\in C^{\infty,0}(W_q,\mathbb{R})$. Moreover
\begin{align*}
\nabla_{u_1}\Psi(u_1,f_2,\omega,\epsilon)[h]
&=\nabla_{u_1}\Phi_0(u_1,u_2,\omega,\epsilon)[h]+{^t[{\rm D}_{u_1}u_2(u_1,f_2,\omega,\epsilon)[h]]}\cdot
\nabla_{u_2}\Phi_0(u_1,u_2,\omega,\epsilon)\\
&=\nabla_{u_1}\Phi_0(u_1,u_2(u_1,f_2,\omega,\epsilon),\omega,\epsilon)[h]\\
&=\int(\tilde{L}_\omega u_1+\epsilon f_1+\epsilon\nabla_{u_1}\psi_2(u_1,f_2,\omega,\epsilon))h~{\rm d}t{\rm d}x,
\end{align*}
where $\Phi_0:=\Phi_1+\epsilon\Phi_2$. Hence $u_1$ is a critical point of $\Psi$ if and only if it is a solution of Eq. \eqref{E3.2.13}.
\end{proof}
Proposition \ref{proposition3.2.1} gives that we just look for families of solutions $u_1\in\mathcal{H}^\sigma$ to Eq. \eqref{E3.2.13}.
To simplify this problem,  we leave out that $\psi_2$ (defined in \eqref{e1.2}) depends on the $f_2$.  Fixing the force term $f=f_1+f_2$ and putting $q>0$,  $\epsilon\in[0,\gamma_0]$ with $\gamma_0\in(0,1]$ small enough, we turn to study the following  equation
\begin{equation}\label{E3.3.4}
\tilde{L}_\omega u+\epsilon f+\epsilon \nabla_u\psi_2(u,\omega,\epsilon)=0,
\end{equation}
 where $u\in B_q(\mathcal{H}^\sigma)$, $f\in\mathcal{H}^s$, $\psi_2\in C^{\infty,0}(B_q(\mathcal{H}^\sigma),\mathbb{R})$, with $\sigma\in[\sigma_0,s]$.

\section{Para-linearization of the equation}\label{sec:3}

 Applying the equivalent norms in \eqref{E2.2.6}, the aim of this section is to reduce \eqref{E3.3.4} into a para-differential equation
  on $\mathcal{H}^{\sigma}$. We first define classes of operators.

\subsection{Spaces of operators}
Define the spaces $\tilde{\mathcal{H}}^\sigma_{\mathbb{C}}:=\tilde{\mathcal{H}}^\sigma(\mathbb{T}\times\mathbb{T}^d;\mathbb{C})$,
${\mathcal{H}}^\sigma_{\mathbb{C}}:=\mathcal{H}^\sigma(\mathbb{T}\times\mathbb{T}^d;\mathbb{C})$ for complex valued functions. Other notations
 are defined in the similar way as in section \ref{sec:2}.

\begin{defi}\label{definition3.1.1}
Set $\chi\in\mathbb{R}$, $q>0$ with $u\in B_q(\mathcal{H}^\sigma_{\mathbb{C}})$, $N\in\mathbb{N}$ and $\sigma\in\mathbb{R}$ with $\sigma\geq \sigma_0+2N+(d+1)/2$. Denote the space of maps $u\rightarrow A(u)$ defined on $B_q(\mathcal{H}^\sigma_{\mathbb{C}})$ by $\Sigma^\chi(N,\sigma,q)$ , with values in the space of linear maps from $C^\infty(\mathbb{T}\times\mathbb{T}^d;\mathbb{C})$ to $\mathcal{D}'(\mathbb{T}\times\mathbb{T}^d;\mathbb{C})$.
And there exists a constant $C>0$, such that for any $n,n'\in\mathbb{Z}^d, u\rightarrow \Pi_n A(u)\Pi_{n'}$ is smooth with values in $\mathcal{L}(\mathcal{H}^0_{\mathbb{C}})$.
And  for any $2M\in\mathbb{N}$ with $(d+1)/2\leq M\leq \sigma-\sigma_0-2N$, any $u\in B_q(\mathcal{H}^\sigma_{\mathbb{C}})$, any $j\in\mathbb{N}$,
any $w_1,...,w_j\in \mathcal{H}^\sigma_{\mathbb{C}}$, any $n,n'\in \mathbb{Z}^d$,  the following holds:\\
(i)~For $j\geq1$, it has
\begin{align}
\|\Pi_n(\mathrm{D}^j_u A(u)\cdot(w_1,...,w_j))
\Pi_{n'}\|_{\mathcal{L}(\mathcal{H}^0_{\mathbb{C}})}\leq&
C(1+|n|+|n'|)^{2\chi}\langle n-n'\rangle^{-2M}\nonumber\\
&\times\mathds{1}_{|n-n'|\leq \frac{1}{10}(|n|+|n'|)}\prod\limits_{l=1}^j \|w_l\|_{\mathcal{H}^{\sigma_0+2N+M}_{\mathbb{C}}}.\label{E3.1.1}
\end{align}
(ii)
For $j=0$,  it has
\begin{equation*}
\|\Pi_n A(u)\Pi_{n'}\|_{\mathcal{L}(\mathcal{H}^0_{\mathbb{C}})}\leq C(1+|n|+|n'|)^{2\chi} \langle n-n'\rangle^{-2M}
\mathds{1}_{|n-n'|\leq \frac{1}{10}(|n|+|n'|)}.
\end{equation*}
\end{defi}
\begin{rema}
In formula \eqref{E3.1.1}, the term $\langle n-n'\rangle^{-2M}$ reflects the available $x$-smoothness of the symbol of a pseudo-differential operator, and the term $\mathds{1}_{|n-n'|\leq \frac{1}{10}(|n|+|n'|)}$ reflects the cut-off.
\end{rema}
\begin{rema}\label{remark3.1.1}
Definition \ref{definition3.1.1} indicates that $\partial_{tt}(A(u))$ belongs to $\Sigma^\chi(N+1,\sigma,q)$ when $A\in\Sigma^\chi(N,\sigma,q)$. In fact,
\begin{equation}\label{E2}
\partial_{tt} A (u)=\mathrm{D}_{uu} A (u)\cdot(\partial_t u)^2+\mathrm{D}_u A (u)\cdot\partial_{tt}u.
\end{equation}
If we assume $M\leq \sigma-2(N+1)-\sigma_0$, for all $u\in B_q(\mathcal{H}^\sigma_{\mathbb{C}})$, all $j\geq1$, then formulae \eqref{E3.1.1} and \eqref{E2} give that
\begin{align*}
&\|\Pi_n \mathrm{D}^j_u
(\partial_{tt}A(u))\cdot(w_1,...,w_j)\Pi_{n'}\|_{\mathcal{L}(\mathcal{H}^0_\mathbb{C})}\\
{\leq}&C(1+|n|+|n'|)^{2\chi} \langle n-n'\rangle^{-2M}\mathds{l}_{|n-n'|\leq \frac{1}{10}(|n|+|n'|)}(\|\partial_t u\|^2_{\mathcal{H}^{\sigma_0+2N+M}_{\mathbb{C}}}\\
&+
\|\partial_{tt} u\|_{\mathcal{H}^{\sigma_0+2N+M}_{\mathbb{C}}})\prod\limits_{l=1}^j \|w_l\|_{\mathcal{H}^{\sigma_0+2N+M}_{\mathbb{C}}}\\
\leq&
C_1(1+|n|+|n'|)^{2\chi} \langle n-n'\rangle^{-2M}\mathds{l}_{|n-n'|\leq \frac{1}{10}(|n|+|n'|)}(\|u\|^2_{\mathcal{H}^{\sigma}_{\mathbb{C}}}
+\|u\|_{\mathcal{H}^{\sigma}_{\mathbb{C}}})\prod\limits_{l=1}^j \|w_l\|_{\mathcal{H}^{\sigma_0+2N+M}_{\mathbb{C}}}\\
\leq&
C_2(1+|n|+|n'|)^{2\chi} \langle n-n'\rangle^{-2M}\mathds{l}_{|n-n'|\leq \frac{1}{10}(|n|+|n'|)}\prod\limits_{l=1}^j \|w_l\|_{\mathcal{H}^{\sigma_0+2(N+1)+M}_{\mathbb{C}}}
\end{align*}
 The same conclusion is reached as the case of $j=0$. This reads that $\partial_{tt}(A(u))$ is in $\Sigma^m(N+1,\sigma,q)$.
\end{rema}
\begin{rema}\label{remark3.1.3}
Owing to Definition \ref{definition3.1.1}, we have that
\begin{equation*}
\begin{array}{ll}
&\Sigma^{\chi_1}(N,\sigma,q)\subset\Sigma^{\chi_2}(N,\sigma,q)~~\text{when}~~\chi_1\leq \chi_2;\\
&\Sigma^\chi(N_1,\sigma,q)\subset\Sigma^\chi(N_2,\sigma,q)~~{\text{when}}~~N_1\leq N_2.
\end{array}
\end{equation*}
\end{rema}
\begin{lemm}\label{lemma3.1.1}
Let $\sigma,\chi,N,q$ satisfy the conditions of Definition \ref{definition3.1.1}. Then for all $u\in B_q(\mathcal{H}^\sigma_{\mathbb{C}})$, $s\in\mathbb{R}$, the operator $A(u)$ is bounded from $\mathcal{H}^s_{\mathbb{C}}$ to $\mathcal{H}^{s-\chi}_{\mathbb{C}}$ and the map $u\rightarrow A(u)$ is a smooth map from $B_q(\mathcal{H}^\sigma_{\mathbb{C}})$ to the space $\mathcal{L}(\mathcal{H}^s_{\mathbb{C}},\mathcal{H}^{s-\chi}_{\mathbb{C}})$. In addition, for all $j\in\mathbb{N}$, $u\in B_q(\mathcal{H}^\sigma_{\mathbb{C}})$, $w_1,...,w_j\in \mathcal{H}^\sigma_{\mathbb{C}}$, some constant $C>0$, the following inequality
\begin{equation}\label{E3.1.2}
\|\mathrm{D}^j_u A(u)\cdot(w_1,...,w_j)\|_{\mathcal{L}(\mathcal{H}^s_{\mathbb{C}},\mathcal{H}^{s-m}_{\mathbb{C}})}\leq C\prod\limits_{l=1}^j\|w_l\|_{\mathcal{H}^{\sigma_0+2N+(d+1)/2}_{\mathbb{C}}}.
\end{equation}
holds.
\end{lemm}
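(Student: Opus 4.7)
The plan is to regard $A(u)$ as a block matrix $(\Pi_n A(u)\Pi_{n'})_{n,n'\in\mathbb{Z}^d}$ acting on the decomposition $v=\sum_{n'}\Pi_{n'}v$, and to reduce boundedness from $\mathcal{H}^s_{\mathbb{C}}$ to $\mathcal{H}^{s-\chi}_{\mathbb{C}}$ to an $\ell^2\to\ell^2$ bound verified by Schur's test. Exploiting the equivalent norms in \eqref{E2.2.6}, I would write
\[
\|A(u)v\|_{\mathcal{H}^{s-\chi}_{\mathbb{C}}}^{2}\;\asymp\;\sum_{n}\langle n\rangle^{4(s-\chi)}\Bigl\|\sum_{n'}\Pi_{n}A(u)\Pi_{n'}v\Bigr\|_{\mathcal{H}^0_{\mathbb{C}}}^{2},\qquad
\|v\|_{\mathcal{H}^{s}_{\mathbb{C}}}^{2}\;\asymp\;\sum_{n'}\langle n'\rangle^{4s}\|\Pi_{n'}v\|_{\mathcal{H}^0_{\mathbb{C}}}^{2},
\]
and set $T_{n,n'}:=\langle n\rangle^{2(s-\chi)}\|\Pi_{n}A(u)\Pi_{n'}\|_{\mathcal{L}(\mathcal{H}^0_{\mathbb{C}})}\langle n'\rangle^{-2s}$, so that the desired estimate is equivalent to $\|T\|_{\ell^{2}\to\ell^{2}}<\infty$.

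The key observation is that the cut-off $\mathds{1}_{|n-n'|\leq\frac{1}{10}(|n|+|n'|)}$ appearing in Definition~\ref{definition3.1.1} forces $\tfrac{9}{11}\langle n\rangle\leq\langle n'\rangle\leq\tfrac{11}{9}\langle n\rangle$, so that $(1+|n|+|n'|)^{2\chi}\lesssim\langle n\rangle^{2\chi}\asymp\langle n'\rangle^{2\chi}$ uniformly in this region. Inserting the $j=0$ bound from Definition~\ref{definition3.1.1} and using $\langle n\rangle\asymp\langle n'\rangle$, I obtain
\[
T_{n,n'}\;\leq\;C\,\langle n\rangle^{2(s-\chi)}\langle n\rangle^{2\chi}\langle n'\rangle^{-2s}\langle n-n'\rangle^{-2M}\mathds{1}_{|n-n'|\leq\frac{1}{10}(|n|+|n'|)}
\;\leq\;C'\,\langle n-n'\rangle^{-2M}.
\]
Since $2M\geq d+1>d$, both $\sup_{n}\sum_{n'}\langle n-n'\rangle^{-2M}$ and $\sup_{n'}\sum_{n}\langle n-n'\rangle^{-2M}$ are finite, and Schur's test yields $\|T\|_{\ell^{2}\to\ell^{2}}\leq C''$, hence the desired $\mathcal{L}(\mathcal{H}^s_{\mathbb{C}},\mathcal{H}^{s-\chi}_{\mathbb{C}})$-bound on $A(u)$.

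For the derivative estimate \eqref{E3.1.2} and smoothness of $u\mapsto A(u)$, I would repeat the same Schur argument with the block $\Pi_{n}\mathrm{D}^{j}_{u}A(u)\cdot(w_{1},\dots,w_{j})\Pi_{n'}$ in place of $\Pi_{n}A(u)\Pi_{n'}$, using part (i) of Definition~\ref{definition3.1.1}; the factor $\prod_{l=1}^{j}\|w_{l}\|_{\mathcal{H}^{\sigma_{0}+2N+M}_{\mathbb{C}}}$ passes through the $n,n'$ sums as a scalar, producing \eqref{E3.1.2}. Smoothness of $u\mapsto\Pi_{n}A(u)\Pi_{n'}$ in $\mathcal{L}(\mathcal{H}^{0}_{\mathbb{C}})$, guaranteed by Definition~\ref{definition3.1.1}, together with the uniform $\ell^{2}$-convergence provided by the Schur bounds, then gives smoothness of $u\mapsto A(u)$ into $\mathcal{L}(\mathcal{H}^{s}_{\mathbb{C}},\mathcal{H}^{s-\chi}_{\mathbb{C}})$.

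The only genuinely delicate point is the bookkeeping at the interface of the para-differential cut-off and the weight $(1+|n|+|n'|)^{2\chi}$: one must verify that, precisely on the support of $\mathds{1}_{|n-n'|\leq\frac{1}{10}(|n|+|n'|)}$, the weights $\langle n\rangle^{\pm}$ and $\langle n'\rangle^{\pm}$ are interchangeable up to constants, so that all of the $\chi$-dependence is absorbed and what remains in $T_{n,n'}$ is the summable convolution kernel $\langle n-n'\rangle^{-2M}$. Once that comparability is in place, the boundedness and smoothness follow from Schur's test and dominated convergence with no further difficulty.
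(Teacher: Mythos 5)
Your proposal is correct and is essentially the paper's argument made explicit: the paper's one-line proof invokes formula \eqref{E3.1.1} with $2M=d+1$ and the equivalent norm $\bigl(\sum_n\langle n\rangle^{4\sigma}\|\Pi_n\cdot\|^2_{\mathcal{H}^0}\bigr)^{1/2}$ from \eqref{E2.2.6}, which amounts to precisely the Schur test you carry out, with the cut-off forcing $\langle n\rangle\asymp\langle n'\rangle$ so that the weight $(1+|n|+|n'|)^{2\chi}$ cancels against $\langle n\rangle^{2(s-\chi)}\langle n'\rangle^{-2s}$ and leaves the summable kernel $\langle n-n'\rangle^{-(d+1)}$. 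Your treatment of the derivative estimate and of smoothness matches the paper's intent as well.
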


\begin{proof}
From formula \eqref{E3.1.1} with $2M=d+1$ and the second norm defined by \eqref{E2.2.6}, it follows that
\begin{equation*}
\|\mathrm{D}^j_u A(u)\cdot(w_1,...,w_j)u\|^2_{\mathcal{H}^{s-m}_{\mathbb{C}}}
{\leq}C\|u\|^2_{\mathcal{H}^s_{\mathbb{C}}}\prod\limits_{l=1}^j \|w_l\|^2_{\mathcal{H}^{\sigma_0+2N+(d+1)/2}_{\mathbb{C}}}.
\end{equation*}
This completes the proof of the lemma.
\end{proof}
\begin{defi}\label{definition3.1.2}
Put $\sigma\in\mathbb{R}$ with $\sigma\geq \sigma_0+2N+(d+1)/2,N\in\mathbb{N},\nu\in\mathbb{N},q>0,r\geq0$. One denotes by $\mathcal{R}^r_\nu(N,\sigma,q)$ the space of smooth maps $u\rightarrow R(u)$ defined on $B_q(\mathcal{H}^\sigma_{\mathbb{C}})$, with values in $ \mathcal{L}(\mathcal{H}^s_{\mathbb{C}},\mathcal{H}^{s+r}_{\mathbb{C}})$ for any $s\geq \sigma_0+\nu$, satisfying, for all $j\in\mathbb{N}$, $s\geq \sigma_0+\nu$, $u\in B_q(\mathcal{H}^\sigma_{\mathbb{C}})$, $w_1,...,w_j\in\mathcal{H}^\sigma_{\mathbb{C}}$
\begin{equation}\label{E3.1.3}
\|\mathrm{D}^j_u R(u)\cdot(w_1,...,w_j)\|_{\mathcal{L}(\mathcal{H}^s_{\mathbb{C}},\mathcal{H}^{s+r}_{\mathbb{C}})}\leq C\prod\limits_{l=1}^j\|w_l\|_{\mathcal{H}^\sigma_{\mathbb{C}}}
\end{equation}
for some constant $C>0$. When $j=0$, we have $\|R(u)\|_{\mathcal{L}(\mathcal{H}^s_{\mathbb{C}},\mathcal{H}^{s+r}_{\mathbb{C}})}\leq C.$
\end{defi}

\begin{rema}\label{remark3.1.2}
Lemma \ref{lemma3.1.1} indicates that $\Sigma^{-r}(N,\sigma,q)\subset\mathcal{R}^r_0(N,\sigma,q)$ for $r\geq 0$, $\sigma\geq\sigma_0+2N+(d+1)/2$.
\end{rema}

\begin{rema}\label{remark3.1.5}
 By means of Definition \ref{definition3.1.2}, the following holds:
 \begin{equation*}
\begin{array}{lll}
&\mathcal{R}^{r_1}_\nu(N,\sigma,q)\subset\mathcal{R}^{r_2}_\nu(N,\sigma,q)~~{\text{when}}~~r_1\geq r_2;\\
&\mathcal{R}^r_\nu(N_1,\sigma,q)\subset\mathcal{R}^r_\nu(N_2,\sigma,q)~~{\text{when}}~~N_1\leq N_2;\\
&\mathcal{R}^r_{\nu_1}(N,\sigma,q)\subset\mathcal{R}^r_{\nu_2}(N,\sigma,q)~~{\text{when}}~~\nu_1\leq \nu_2.
\end{array}
\end{equation*}
\end{rema}

\begin{prop}\label{proposition3.1.1}
{(i)}  Let $\sigma\geq\sigma_0+2N+(d+1)/2$. If $A\in\Sigma^\chi(N,\sigma,q)$, then ${A^*}\in\Sigma^\chi(N,\sigma,q)$.\\
{(ii)} Let $\chi_1,\chi_2\in\mathbb{R}$ and  assume $\sigma\geq\sigma_0+2N+(d+1)/2+\max\{\chi_1+\chi_2,0\}$. Put
\begin{equation}\label{E3.1.4}
r=\sigma-\sigma_0-2N-(d+1)/2-(\chi_1+\chi_2)\geq0.
\end{equation}
If $A\in\Sigma^{\chi_1}(N,\sigma,q)$, $B\in\Sigma^{\chi_2}(N,\sigma,q)$, then there exists $D\in\Sigma^{\chi_1+\chi_2}(N,\sigma,q)$ and $R\in \mathcal{R}^r_0(N,\sigma,q)$ such that
\begin{equation*}\label{E3.1.5}
A(u)\circ B(u)=D(u)+R(u).
\end{equation*}
\end{prop}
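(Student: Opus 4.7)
The plan for (i) is a pure symmetry observation. For any admissible $u$ and any $n,n'\in\mathbb{Z}^d$, the block $\Pi_n A^*(u)\Pi_{n'}$ is the $\mathcal{L}(\mathcal{H}^0_{\mathbb{C}})$-adjoint of $\Pi_{n'}A(u)\Pi_n$, so the two blocks share the same operator norm. Since the right-hand side of \eqref{E3.1.1} is invariant under the swap $n\leftrightarrow n'$, the bound assumed for $A$ at $(n',n)$ transfers verbatim to $A^*$ at $(n,n')$. As the Fr\'echet derivative $\mathrm{D}^j_u$ commutes with Hilbert-space adjunction, the same argument applies to every derivative, and smoothness of $u\mapsto A^*(u)$ is inherited from that of $u\mapsto A(u)$.

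For (ii), I would decompose the composition according to whether the output indices satisfy the localization cut-off built into Definition \ref{definition3.1.1}. Concretely, I would write
\[
A(u)B(u)=D(u)+R(u),\qquad D(u):=\sum_{|n-n'|\leq (|n|+|n'|)/10}\Pi_n A(u)B(u)\Pi_{n'},
\]
with $R(u)$ collecting the off-diagonal blocks. Expanding $\Pi_n A(u)B(u)\Pi_{n'}=\sum_{n''}\Pi_n A(u)\Pi_{n''}B(u)\Pi_{n'}$ and using \eqref{E3.1.1} for both factors, the two indicator cut-offs force $|n|\asymp|n''|\asymp|n'|$ (each ratio controlled by $11/9$), so the order factors combine into a single $C(1+|n|+|n'|)^{2(\chi_1+\chi_2)}$. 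A Peetre-type estimate $\sum_{n''}\langle n-n''\rangle^{-2M}\langle n''-n'\rangle^{-2M}\leq C\langle n-n'\rangle^{-2M}$, valid once $2M>d$, then yields
\[
\|\Pi_n(AB)(u)\Pi_{n'}\|_{\mathcal{L}(\mathcal{H}^0_{\mathbb{C}})}\leq C(1+|n|+|n'|)^{2(\chi_1+\chi_2)}\langle n-n'\rangle^{-2M}.
\]
Together with the diagonal indicator this is exactly the shape demanded by Definition \ref{definition3.1.1}, so $D\in\Sigma^{\chi_1+\chi_2}(N,\sigma,q)$. On the off-diagonal region, $\langle n-n'\rangle\gtrsim 1+|n|+|n'|$, so the size factor can be absorbed into the transversal decay to give $\|R_{n,n'}\|\lesssim\langle n-n'\rangle^{-2M+2(\chi_1+\chi_2)}$; a Schur test against the third norm in \eqref{E2.2.6} then produces the $\mathcal{L}(\mathcal{H}^s_{\mathbb{C}},\mathcal{H}^{s+r}_{\mathbb{C}})$ boundedness required by Definition \ref{definition3.1.2} with $r$ as in \eqref{E3.1.4}. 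The analogous estimates for $\mathrm{D}^j_u D$ and $\mathrm{D}^j_u R$ follow by applying the Leibniz rule $\mathrm{D}^j_u(AB)=\sum_{j_1+j_2=j}\binom{j}{j_1}\mathrm{D}^{j_1}_u A\cdot\mathrm{D}^{j_2}_u B$ (acting on partitions of the test vectors) and repeating the same matrix-block bookkeeping, using the size estimates on $\mathrm{D}^{j_1}_u A$ and $\mathrm{D}^{j_2}_u B$ furnished by \eqref{E3.1.1}.

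The main obstacle will be booking the parameter $M$ carefully: it must simultaneously lie in the range allowed by Definition \ref{definition3.1.1} (i.e.\ $2M\in\mathbb{N}$ and $(d+1)/2\leq M\leq\sigma-\sigma_0-2N$), be large enough for the Peetre summation ($2M>d$), and be large enough for the Schur test ($M\geq r+\chi_1+\chi_2+d/2$). The explicit value $r=\sigma-\sigma_0-2N-(d+1)/2-(\chi_1+\chi_2)$ is chosen precisely so that the maximal admissible $M$ meets all three constraints, which accounts for the $(d+1)/2$ loss appearing in \eqref{E3.1.4}. Once the parameter choice is fixed, the remaining steps are routine manipulations with the matrix-element bounds.
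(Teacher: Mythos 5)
Your proposal matches the paper's argument: part (i) is the same symmetry observation that the paper dismisses with ``follows from Definition \ref{definition3.1.1}'', and for part (ii) you use exactly the paper's decomposition into a cut-off ``diagonal'' part $D$ and a remainder $R$, the same Peetre/convolution estimate $\sum_{k}\langle n-k\rangle^{-2M}\langle k-n'\rangle^{-2M}\lesssim\langle n-n'\rangle^{-2M}$ for $D$, and the same absorption of the size factor into transversal decay plus a Schur test (via the norms \eqref{E2.2.6}) for $R$, with the same choice $M=\sigma-\sigma_0-2N$ reproducing \eqref{E3.1.4}. The only stylistic difference is that the paper keeps the surviving indicator $\mathds{1}_{|n-n'|\leq(|n|+|n'|)/2}$ explicit in the bound on $R$ (which is what makes the Schur test uniform in $s$ since it forces $|n|\asymp|n'|$); you note this constraint earlier but drop it in your final display, so you should retain it when carrying out the Schur test.
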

\begin{proof}
{\rm(i)}  It follows from Definition \ref{definition3.1.1}.\\
{\rm(ii)} Define
\begin{align*}
&D(u)=\sum_n\sum_{n'}\Pi_n\left(A(u)\circ B(u)\right)\Pi_{n'}\mathds{1}_{|n-n'|\leq \frac{1}{10}(|n|+|n'|)},\\
&R(u)=\sum_n\sum_{n'}\Pi_n\left(A(u)\circ B(u)\right)\Pi_{n'}\mathds{1}_{|n-n'|> \frac{1}{10}(|n|+|n'|)}.
\end{align*}
Since $\langle n-n'\rangle^{2M}\leq 2^{2M-1}(\langle n-k\rangle^{2M}+\langle k-n'\rangle^{2M})$ and $M\geq (d+1)/2$, we get the upper bound for $j = 0$
\begin{align*}
\|\Pi_nD(u)\Pi_{n'}\|_{\mathcal{L}(\mathcal{H}^0_{\mathbb{C}})}&\leq\sum\limits_k
\|\Pi_nA(u)\Pi_k\|_{\mathcal{L}(\mathcal{H}^0_{\mathbb{C}})}\|\Pi_kB(u)\Pi_{n'}\|_{\mathcal{L}(\mathcal{H}^0_{\mathbb{C}})}\mathds{1}_{|n-n'|\leq \frac{1}{10}(|n|+|n'|)}\\
&\stackrel{\eqref{E3.1.1}}{\leq}C(1+|n|+|n'|)^{2(\chi_1+\chi_2)}\sum\limits_k\langle n-k\rangle^{-2M}\langle k-n'\rangle^{-2M}\mathds{1}_{|n-n'|\leq \frac{1}{10}(|n|+|n'|)}\\
&=C(1+|n|+|n'|)^{2(\chi_1+\chi_2)}\langle n-n'\rangle^{-2M}\sum\limits_k\frac{\langle n-n'\rangle^{2M}\mathds{1}_{|n-n'|\leq \frac{1}{10}(|n|+|n'|)}}{\langle n-k\rangle^{2M}\langle k-n'\rangle^{2M}}\\
&\leq C_1(1+|n|+|n'|)^{2(\chi_1+\chi_2)}\langle n-n'\rangle^{-2M}\mathds{1}_{|n-n'|\leq \frac{1}{10}(|n|+|n'|)}.
\end{align*}
Similarly, the estimates of $\|\Pi_n\mathrm{D}^j_uD(u)\cdot(w_1,...,w_j)\Pi_{n'}\|_{\mathcal{L}(\mathcal{H}^0)}$ for $j\geq1$ are
obtained. Obviously, either $|n-k|\geq \frac{1}{2}(|n-n'|)$ or $|n'-k|\geq \frac{1}{2}(|n-n'|)$ holds. This gives also rise to
\[|n-n'|\leq\frac{1}{2}(|n|+|n'|).\] Based on these facts, by formula {\eqref{E3.1.1}}, we infers that, for $j = 0$,
\begin{align*}
\|\Pi_nR(u)\Pi_{n'}\|_{\mathcal{L}(\mathcal{H}^0_{\mathbb{C}})}
{\leq}&C(1+|n|+|n'|)^{2(\chi_1+\chi_2)}\sum\limits_k\langle n-k\rangle^{-2M}\langle k-n'\rangle^{-2M}\\
&\times\mathds{1}_{|n-k|\leq \frac{1}{10}(|n|+|k|)}\mathds{1}_{|k-n'|\leq \frac{1}{10}(|k|+|n'|)}
\mathds{1}_{|n-n'|> \frac{1}{10}(|n|+|n'|)}\\
\leq&C_1(1+|n|+|n'|)^{2(\chi_1+\chi_2)-2M}\mathds{1}_{|n-n'|\leq\frac{1}{2}(|n|+|n'|)}\\
\leq&C_1(1+|n|+|n'|)^{2(\chi_1+\chi_2)-2M+(d+1)}\langle n-n'\rangle^{-(d+1)}\mathds{1}_{|n-n'|\leq\frac{1}{2}(|n|+|n'|)},
\end{align*}
where $M:=\sigma-\sigma_0-2N$. The same argument as the proof of lemma \ref{lemma3.1.1} derives that $R(u)$ sends $\mathcal{H}^s_{\mathbb{C}}$ to $\mathcal{H}^{s+r}_{\mathbb{C}}$ for any $s$, where $r$ is given by \eqref{E3.1.4}. An reason similar to the one as $j=0$ yields
the estimates of $\|\mathrm{D}^j_u R(u)\cdot(W_1,...,W_j)\|_{\mathcal{L}(\mathcal{H}^s_{\mathbb{C}},\mathcal{H}^{s+r}_{\mathbb{C}})}$ for $j\geq1$ .
\end{proof}

In the rest of this paper, we use those operators $A(u)$ $\big({\rm resp}.$ $R(u)\big)$ of $\Sigma^m(N,\sigma,q)$ $\big({\rm resp}.$ $\mathcal{R}^r_\nu(N,\sigma,q)\big)$ sending real valued functions to real valued functions, i.e. $\overline{A(u)}=A(u)$ $\big({\rm resp}.$ $\overline{R(u)}=R(u)\big)$. Furthermore, we shall consider operators $A(u,\omega,\epsilon),R(u,\omega,\epsilon)$ depending on $(\omega,\epsilon)$, where $(\omega,\epsilon)$ stays in a bounded domain of $\mathbb{R}^2$. If $(\omega,\epsilon)\rightarrow \Pi_n A(u,\omega,\epsilon)\Pi_{n'}$ $\big({\rm resp}.$ $(\omega,\epsilon)\rightarrow R(u,\omega,\epsilon)\big)$ is $C^1$ in $(\omega,\epsilon)$ with values in $\mathcal{L}(\mathcal{H}^0)$ $\big({\rm resp}.$ $\mathcal{L}(\mathcal{H}^s,\mathcal{H}^{s+r})\big)$ and if $\partial_\omega A,\partial_\epsilon A$ $\big({\rm resp}.$ $\partial_\omega R,\partial_\epsilon R\big)$ satisfy \eqref{E3.1.1} $\big({\rm resp}.$ \eqref{E3.1.3}\big), then we shall say that operators $A(u,\omega,\epsilon)$ $\big({\rm resp}.$ $R(u,\omega,\epsilon)\big)$ are $C^1$ in $(\omega,\epsilon)$.

\subsection{Reduce to a para-differential equation}
Denote by $\mathscr {R}[(X^k)^\tau]$ with $k,\tau\in\mathbb{N}^{\varrho}$ the space of polynomials composed by monomials $(X^k)^\tau$ whose weights are equal. If $(X^{k_1})^{\tau_1} \cdots (X^{k_\varrho})^{\tau_\varrho}$ is a monomial, then the weight of $Q,Q\in\mathscr {R}[(X^k)^\tau]$ is defined as ${|k_1|}\tau_1+\cdots +{|k_\varrho|}\tau_\varrho$. Remark that $k_i$ is a multiple index for $1\leq i\leq\varrho$. In addition, abusing notations, $\mathscr {R}[(X^k)^\tau]$ with $k,\tau\in\mathbb{N}^{\vartheta}$ for all $\vartheta\leq\varrho$ denotes the space composed by polynomials whose weights are less or equal to  ${|k_1|}\tau_1+\cdots +{|k_\varrho|}\tau_\varrho$. Let $U$ be an open subset of $\mathcal{H}^{\sigma_0}$, $\psi$ belong to $C^{\infty,0}(X,\mathbb{R})$. For any $u\in U\cap\mathcal{H}^{+\infty},w_1,w_2\in\mathcal{H}^{+\infty}$, we set
\begin{equation}\label{E3.3.1}
L(u;w_1,w_2)=\mathrm{D}^2_u\psi(u)\cdot(w_1,w_2).
\end{equation}
This is a continuous bilinear form in $(w_1,w_2)\in\mathcal{H}^0\times\mathcal{H}^0$. By Riesz theorem, formula \eqref{E3.3.1} can be written as
\begin{equation*}
L(u;w_1,w_2)=\int_{\mathbb{T}\times\mathbb{T}^d}(W(u)w_1)w_2~{\rm d }t {\rm d}x
\end{equation*}
for some symmetric $\mathcal{H}^0$-bounded operator $W(u)$. Definition \ref{definition3.2.2} infers that $u\rightarrow \mathrm{D}^2_u\psi(u)$ is a smooth map defined on $U$ with values in the space of continuous bilinear forms on $\mathcal{H}^0\times\mathcal{H}^0$. This shows that $u\rightarrow W(u)$ is smooth with values in $\mathcal{L}(\mathcal{H}^0,\mathcal{H}^0)$, which then gives for any $u\in U\cap\mathcal{H}^{+\infty},w_1,w_2\in\mathcal{H}^{+\infty}$
\begin{align}
L(u;\partial_{x_j}w_1,w_2)+L(u;w_1,\partial_{x_j}w_2)&=\int_{\mathbb{T}\times\mathbb{T}^d}(\partial_{x_j}W(u)w_1)w_2~{\rm d }t {\rm d}x\nonumber\\
&=-(\mathrm{D}_uL)(u;w_1,w_2)\cdot(\partial_{x_j}u).\label{E3.3.2}
\end{align}
\begin{lemm}\label{lemma3.3.1}
Let $q>0$. For $l\in\mathbb{N},N\in\mathbb{N},N'\in\mathbb{N}$, there are  polynomials $Q^l_N\in\mathscr {R}[(X^k)^\tau]$,
of weight less or equal to $N$, a constant $C>0$, depending only on $l,q,N'$,
such that for all $u\in B_q(\mathcal{H}^{\sigma_0})\cap U\cap\mathcal{H}^{+\infty}$, $h_1,\cdots,h_l\in\mathcal{H}^{+\infty}$, $ n,n'\in\mathbb{Z}^d$, the following holds:
\begin{align}\label{E3.3.3}
\|\Pi_n\mathrm{D}^l_uW(u)\cdot(h_1,\cdots,h_l)\Pi_{n'}\|_{\mathcal{L}(\mathcal{H}^0)}\leq
C\langle n-n'\rangle^{-N'}
\sum\limits_{N_0+\cdot+N_l=N'}Q^l_{N_0}((\|\partial^ku\|_{\mathcal{H}^{\sigma_0}})^\tau)\prod\limits_{l'=1}^{l}\|h_{l'}\|_{\mathcal{H}^{\sigma_0+\frac{N_{l'}}{2}}},
\end{align}
where $Q^l_{N_0}((\|\partial^ku\|_{\mathcal{H}^{\sigma_0}})^\tau)$ is the polynomial composed by these monomials like
\[(\|\partial^{k_1}u\|_{\mathcal{H}^{\sigma_0}})^{\tau_1}
\cdots(\|\partial^{k_\vartheta}u\|_{\mathcal{H}^{\sigma_0}})^{\tau_\vartheta}\] with ${|k_1|}\tau_1+\cdots+{|k_\vartheta|}\tau_\vartheta\leq N_0$.
\end{lemm}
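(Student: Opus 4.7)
The strategy is to extract an operator-valued commutator identity from \eqref{E3.3.2}, iterate it $N'$ times to gain off-diagonal decay in $n-n'$, and then bound the resulting multilinear operators in $\mathcal{L}(\mathcal{H}^0)$ using the $C^{\infty,0}$ regularity of $\psi$.

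First I would rewrite \eqref{E3.3.2} in operator form. Since $W(u)$ is symmetric on $\mathcal{H}^0$, integrating by parts on $\mathbb{T}\times\mathbb{T}^d$ in the expression $\int(W(u)\partial_{x_j}w_1)w_2\,\mathrm{d}t\mathrm{d}x + \int(W(u)w_1)\partial_{x_j}w_2\,\mathrm{d}t\mathrm{d}x$ produces $-\int[\partial_{x_j},W(u)]w_1\cdot w_2\,\mathrm{d}t\mathrm{d}x$. Comparing with $-(\mathrm{D}_u L)(u;w_1,w_2)\cdot\partial_{x_j}u = -\int(\mathrm{D}_u W(u)\cdot \partial_{x_j}u)w_1\cdot w_2\,\mathrm{d}t\mathrm{d}x$ (the last equality following by differentiating $L(u;w_1,w_2)=\int(W(u)w_1)w_2\,\mathrm{d}t\mathrm{d}x$ in $u$), one obtains the key identity
\[
[\partial_{x_j},W(u)] \;=\; \mathrm{D}_u W(u)\cdot \partial_{x_j}u.
\]
Differentiating this identity $l$ times in $u$ in the directions $(h_1,\dots,h_l)$ (with $h_i$ independent of $u$, but noting that $\partial_{x_j}u$ depends linearly on $u$) and applying the product rule yields
\[
[\partial_{x_j},\mathrm{D}^l_uW(u)\cdot(h_1,\dots,h_l)] = \mathrm{D}^{l+1}_uW(u)\cdot(h_1,\dots,h_l,\partial_{x_j}u) + \sum_{i=1}^l \mathrm{D}^l_uW(u)\cdot(h_1,\dots,\partial_{x_j}h_i,\dots,h_l).
\]

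Next I would iterate this identity $N'$ times. Bookkeeping the combinatorics of the ``create'' step (which raises the differentiation order by one and appends a new argument $\partial_{x_j}u$) against the ``differentiate'' step (which applies $\partial_{x_j}$ to an existing argument), the iterated commutator $\mathrm{ad}^{N'}_{\partial_{x_j}}\bigl(\mathrm{D}^l_uW(u)\cdot(h_1,\dots,h_l)\bigr)$ is a finite sum of terms of the form
\[
\mathrm{D}^{l+a}_u W(u)\cdot\bigl(\partial_{x_j}^{p_1}h_1,\dots,\partial_{x_j}^{p_l}h_l,\partial_{x_j}^{q_1}u,\dots,\partial_{x_j}^{q_a}u\bigr),
\qquad 0\le a\le N',\ \ p_1+\cdots+p_l+q_1+\cdots+q_a=N'.
\]
To convert this into off-diagonal decay, I use that $\Pi_n$ is the spectral projector for $\partial_{x_j}$ with eigenvalue $in_j$, so for any operator $B$ one has $\Pi_n\,\mathrm{ad}^{N'}_{\partial_{x_j}}(B)\,\Pi_{n'} = (i(n_j-n'_j))^{N'}\Pi_n B\Pi_{n'}$. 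Choosing $j$ to maximize $|n_j-n'_j|$ gives $|n_j-n'_j|\geq |n-n'|/\sqrt{d}$, so for $n\neq n'$ we divide by $(n_j-n'_j)^{N'}$ to obtain the factor $\langle n-n'\rangle^{-N'}$; the case $n=n'$ is trivial.

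It remains to estimate each term in the sum in $\mathcal{L}(\mathcal{H}^0)$. Since $\psi\in C^{\infty,0}(U,\mathbb{R})$, Definitions \ref{definition3.2.1}--\ref{definition3.2.2} provide the continuity of $\mathrm{D}^2_u(\nabla_u\psi)$ on $\mathcal{H}^0\times\mathcal{H}^0$ with values in $\mathcal{H}^{-\sigma_0}$; combined with the smoothness of $u\mapsto\mathrm{D}^2_u(\nabla_u\psi)$ on $U\cap\mathcal{H}^{\sigma_0}$, a straightforward induction extends this to arbitrary-order multilinear continuity, giving
\[
\bigl\|\mathrm{D}^{l+a}_u W(u)\cdot(\partial_{x_j}^{p_1}h_1,\dots,\partial_{x_j}^{p_l}h_l,\partial_{x_j}^{q_1}u,\dots,\partial_{x_j}^{q_a}u)\bigr\|_{\mathcal{L}(\mathcal{H}^0)} \le C\prod_{i=1}^l\|\partial_{x_j}^{p_i}h_i\|_{\mathcal{H}^{\sigma_0}}\prod_{s=1}^a\|\partial_{x_j}^{q_s}u\|_{\mathcal{H}^{\sigma_0}}.
\]
Using $\|\partial_{x_j}^p v\|_{\mathcal{H}^{\sigma_0}}\le C\|v\|_{\mathcal{H}^{\sigma_0+p/2}}$ (since by \eqref{E1.4} an $x$-derivative has order $\tfrac{1}{2}$ in the $\mathcal{H}^\sigma$ scale) for the $h_i$-factors, and collecting the $u$-factors into a monomial $(\|\partial^{\beta_1}u\|_{\mathcal{H}^{\sigma_0}})\cdots(\|\partial^{\beta_a}u\|_{\mathcal{H}^{\sigma_0}})$ of weight $N_0:=q_1+\cdots+q_a$, then setting $N_{l'}:=p_{l'}$, summing over the finitely many combinatorial terms produces the polynomial $Q^l_{N_0}$ of weight $\le N_0$ and the distribution constraint $N_0+N_1+\cdots+N_l=N'$, which yields \eqref{E3.3.3}.

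\textbf{Main obstacle.} The delicate part is the multilinear $\mathcal{L}(\mathcal{H}^0)$-bound in the last step, because Definition \ref{definition3.2.1} only states the extension property for $\mathrm{D}_u G$ and $\mathrm{D}^2_u G$; invoking the smoothness of $u\mapsto \mathrm{D}^2_u G(u)$ to propagate the estimate to all higher derivatives $\mathrm{D}^{l+a}_u G = \mathrm{D}^{l+a+1}_u\psi$ requires careful induction. A second, more cosmetic difficulty is the bookkeeping of the combinatorial expansion of $\mathrm{ad}^{N'}_{\partial_{x_j}}$ so that the resulting polynomial genuinely has weight at most $N_0$ with the right distribution $N_0+N_1+\cdots+N_l=N'$ and the coefficients depend only on $l,N'$.
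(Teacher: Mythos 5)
Your proposal is correct and takes essentially the same approach as the paper: both iterate the commutator identity \eqref{E3.3.2} $N'$ times to trade off-diagonal decay in $n-n'$ for $x$-derivatives landing on $u$ and the $h_i$, and both treat the resulting higher-order multilinear $\mathcal{L}(\mathcal{H}^0)$-bound at the same level of informality (the paper simply appeals to ``the properties of the operator $L$''). Your operator-level phrasing via $\mathrm{ad}^{N'}_{\partial_{x_j}}$ with a single $j$ chosen to maximize $|n_j-n'_j|$ is a cosmetic variant of the paper's route, which works on the bilinear form $L$ directly and distributes the power $\langle n-n'\rangle^{N'}$ over all coordinate directions using $\langle n-n'\rangle\leq 1+\sum_j|n_j-n'_j|$.
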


\begin{proof}
Using $^t\Pi_n=\Pi_{-n}$ and \eqref{E3.3.2},
for $l=0$, we deduce for any $u\in B_q(\mathcal{H}^{\sigma_0})\cap U\cap\mathcal{H}^{+\infty}$, any $w_1,w_2\in\mathcal{H}^{+\infty}$
\begin{align}
(n_{j}-n'_{j})\int\big(\Pi_nW(u)\Pi_{n'}w_1\big)w_2~{\rm d}t{\rm d}x
&=(n_{j}-n'_{j})\int\big(W(u)\Pi_{n'}w_1\big)\Pi_{-n}w_2~{\rm d}t{\rm d}x\nonumber\\
&=\mathrm{i}(L(u;\partial_{x_j}\Pi_{n'}w_1,\Pi_{-n}w_2)+L(u;\Pi_{n'}w_1,\partial_{x_j}\Pi_{-n}w_2))\nonumber\\
&=-{\rm i}(\mathrm{D}_uL)(u;\Pi_{n'}w_1,\Pi_{-n}w_2)\cdot(\partial_{x_{j}}u).\label{E1.5}
\end{align}
Moreover, we have
\begin{equation*}
\langle n-n'\rangle=\sqrt{1+(n_1-n'_1)^2+\cdots+(n_d-n'_d)^2}\leq1+|n_1-n'_1|+\cdots+|n_d-n'_d|.
\end{equation*}
Iterating the above computation \eqref{E1.5}, it yields that
\begin{equation*}
\langle n-n'\rangle^{N'}\left|\int(\Pi_nW(u)\Pi_{n'}w_1)w_2~{\rm d}t {\rm d}x \right|
\end{equation*}
is bounded from above by finite sum of
\begin{equation}\label{E200}
|(\mathrm{D}^\vartheta_uL)(u;\Pi_{n'}w_1,\Pi_{-n}w_2)(\partial^{\kappa_1}u,
\cdots,\partial^{\kappa_\vartheta}u)|
\end{equation}
with ${|\kappa_1|}+\cdots+{|\kappa_\vartheta|}\leq N'$. According to the properties of the operator $L$, the term in \eqref{E200} is bounded from above by
\begin{equation*}
\begin{array}{l}
C\|\Pi_{n'}w_1\|_{\mathcal{H}^0}\|\Pi_{-n}w_2\|_{\mathcal{H}^0}
\prod\limits_{j=1}^\vartheta\|\partial^{\kappa_{j}}u\|_{\mathcal{H}^{\sigma_0}}.
\end{array}
\end{equation*}
The remainder of the discussion on $l\geq1$ is analogous to the case of $l=0$, we have \eqref{E3.3.3} for any $l\geq0$.
\end{proof}
Put for $p\in\mathbb{N}$, $u\in\mathcal{H}^0$
\begin{equation}\label{E3.3.5}
\begin{array}{ll}
&\Delta_pu=\sum\limits_{\stackrel{n\in\mathbb{Z}^d}{2^{p-1}\leq|n|<2^p}}\Pi_{n}u, p\geq1, \quad  \Delta_0u=\Pi_0u,\\
&S_pu=\sum\limits_{p'=0}^{p-1}\Delta_{p'}u=\sum\limits_{n\in\mathbb{Z}^d,|n|<2^{p-1}}\Pi_{n}u, p\geq1, \quad S_0u=0.
\end{array}
\end{equation}
\begin{lemm}\label{lemma3.3.2}
Let $q>0,\sigma\in\mathbb{R}$ with $\sigma\geq\sigma_0+(d+1)/2$, $\gamma_0\in(0,1]$ with $\gamma_0$ small enough.
There exists a map $(u,\omega,\epsilon)\rightarrow W(u,\omega,\epsilon)$ on $B_q(\mathcal{H}^\sigma)\times[1,2]\times[0,\gamma_0]$
with values in $\mathcal{L}(\mathcal{H}^0)$, which is symmetric and is $C^\infty$ in $u$ with $C^1$ in $(\omega,\epsilon)$, such that for any $(u,\omega,\epsilon)$
\begin{equation}\label{E3.3.7}
\psi_2(u,\omega,\epsilon)=\int_{\mathbb{T}\times\mathbb{T}^d}\big(W(u,\omega,\epsilon)u\big)u~{\rm d}t {\rm d}x.
\end{equation}
Furthermore, fix some $l\in\mathbb{N}$, $N\in\mathbb{N}$, $N'\in\mathbb{N}$. There exists polynomials $Q^l_N\in\mathscr {R}[(X^k)^\tau]$,
of weight equal to $N$, and a constant $C$, depending on $l,q,N'$, such that, for all $u\in B_q(\mathcal{H}^\sigma)$, $\epsilon\in[0,\gamma_0]$, $\omega\in[1,2]$, $(\eta_1,\eta_2)\in\mathbb{N}^2$ with $\eta_1+\eta_2\leq1$,
$(h_1,\cdots,h_l)\in(\mathcal{H}^\sigma)^l$, $n,n'\in\mathbb{Z}^d$, the following holds:
\begin{align}
\|\Pi_n\partial^{\eta_1}_\omega\partial^{\eta_2}_\epsilon \mathrm{D}^l_uW(u,\omega,\epsilon)\cdot(h_1,
&\cdots,h_l)
\Pi_{n'}\|_{\mathcal{L}(\mathcal{H}^0)}
\leq
C\langle n-n'\rangle^{-N'} \nonumber\\
&\times\sum\limits_{N_0+\cdots+N_l=N'}Q^l_{N_0}((\|\partial^k S(n,n')u\|_{\mathcal{H}^{\sigma_0}})^\tau)
\prod\limits_{l'=1}^{l}\|S(n,n')h_{l'}\|_{\mathcal{H}^{\sigma_0+\frac{N_{l'}}{2}}}, \label{E3.3.8}
\end{align}
where
$S(n,n')=\sum\limits_{|n''|\leq2(1+\min(|n|,|n'|))}\Pi_{n''}$ for $n''\in\mathbb{Z}^d$.
\end{lemm}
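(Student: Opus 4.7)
The plan is to build $W$ out of the Hessian of $\psi_2$ by a Taylor-with-remainder plus Riesz construction, and then to derive \eqref{E3.3.8} by the integration-by-parts scheme of Lemma \ref{lemma3.3.1}, refined so as to localize each $u$-slot and each $h_{l'}$ to low Fourier modes using the cutoff $S(n,n')$.

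\textbf{Construction of $W$.} After absorbing the affine part of $\psi_2$ at $u=0$ into $f$ in \eqref{E3.3.4} — a natural step in view of condition \eqref{E1.2} propagated through \eqref{e1.2} — I may assume $\psi_2(0,\omega,\epsilon)=0$ and $\nabla_u\psi_2(0,\omega,\epsilon)=0$. Taylor's formula then gives
\[
\psi_2(u,\omega,\epsilon)=\int_0^1(1-t)\,D_u^2\psi_2(tu,\omega,\epsilon)\cdot(u,u)\,dt.
\]
Since $\psi_2\in C^{\infty,0}$, Definition \ref{definition3.2.2} ensures that the symmetric bilinear form $D_u^2\psi_2(v,\omega,\epsilon)$ extends as a bounded operator on $\mathcal{H}^0\times\mathcal{H}^0$; Riesz representation then yields a symmetric $\widetilde{W}(v,\omega,\epsilon)\in\mathcal{L}(\mathcal{H}^0)$ with $D_u^2\psi_2(v,\omega,\epsilon)\cdot(w_1,w_2)=\int(\widetilde{W}(v,\omega,\epsilon)w_1)w_2\,dt\,dx$. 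Setting $W(u,\omega,\epsilon):=\int_0^1(1-t)\,\widetilde{W}(tu,\omega,\epsilon)\,dt$ delivers a symmetric operator satisfying \eqref{E3.3.7}, with $C^\infty$-regularity in $u$ and $C^1$-regularity in $(\omega,\epsilon)$ inherited from $\psi_2$.

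\textbf{The basic estimate.} Pairing $\Pi_n\partial_\omega^{\eta_1}\partial_\epsilon^{\eta_2}D_u^lW(u,\omega,\epsilon)\cdot(h_1,\dots,h_l)\Pi_{n'}w_1$ against $w_2$ rewrites, via the Taylor representation of $W$, as a $t$-integral of multilinear forms
\[
D_u^{l+2}\psi_2(tu,\omega,\epsilon)\cdot(h_1,\dots,h_l,\Pi_{n'}w_1,\Pi_{-n}w_2)
\]
(and similar $(\omega,\epsilon)$-derivatives). Multiplying by $(n_j-n'_j)^{N'}$ and iterating the multilinear analogue of \eqref{E3.3.2} $N'$ times in the coordinate directions $x_j$ produces a finite sum, over $\vartheta\leq N'$ and multi-indices $\kappa_1,\dots,\kappa_\vartheta$ with $|\kappa_1|+\cdots+|\kappa_\vartheta|\leq N'$, of terms of the form
\[
D_u^{l+\vartheta+2}\psi_2(tu,\omega,\epsilon)\cdot(h_1,\dots,h_l,\partial^{\kappa_1}u,\dots,\partial^{\kappa_\vartheta}u,\Pi_{n'}w_1,\Pi_{-n}w_2),
\]
each bounded through continuity of $D_u^k L$ as in Lemma \ref{lemma3.3.1}, yielding the polynomial weight $Q^l_{N_0}$ in the $\mathcal{H}^{\sigma_0}$-norms of the $u$-factors, the $\mathcal{H}^{\sigma_0+N_{l'}/2}$-norms of the $h_{l'}$, and the factor $\langle n-n'\rangle^{-N'}$.

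\textbf{Localizing with $S(n,n')$, and main obstacle.} Splitting each $h_{l'}$ and each $\partial^{\kappa_j}u$ as $S(n,n')(\cdot)+(I-S(n,n'))(\cdot)$, the fully localized contributions yield the right-hand side of \eqref{E3.3.8} verbatim. For contributions carrying at least one high-frequency factor — whose Fourier support has magnitude $\gtrsim 2(1+\min(|n|,|n'|))$, while the two fixed slots are pinned at frequencies $n'$ and $n$ — I would run additional integrations by parts in $x$, trading each unit of high-frequency support for an extra power of $\langle n-n'\rangle^{-1}$, and absorb the resulting remainders by enlarging $N'$ to some $N''$. The main obstacle is precisely this last localization step: because $\psi_2$ is merely abstract ($C^{\infty,0}$), no symbolic calculus is available, and the localization must be extracted purely from iterated use of the identity \eqref{E3.3.2} together with the quantitative continuity of $D_u^k L$ on Sobolev spaces. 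The polynomial bookkeeping for $Q^l_{N_0}$ then parallels Lemma \ref{lemma3.3.1}, but care is required in distributing the integrations-by-parts budget between generating decay in $\langle n-n'\rangle$ and killing the high-frequency tails of $u$ and $h_{l'}$.
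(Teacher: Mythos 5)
The construction of $W$ you propose (Taylor at $u=0$, then Riesz) differs substantially from the paper's, and the difference is not cosmetic: it is precisely what breaks the localization step that you flag as the main obstacle. The paper builds $W$ by telescoping $\psi_2$ along the dyadic partial sums $S_p u$ of \eqref{E3.3.5} twice, arriving at a double sum of terms $\Delta_{p_1}\,\widetilde{W}\bigl(\Omega_{p_1,p_2}(\theta_1,\theta_2)u\bigr)\,\Delta_{p_2}(S_{p_1}+\theta_1\Delta_{p_1})$ (plus the symmetrization). Because $\Delta_{p_1}$ on the left and $\Delta_{p_2}(S_{p_1}+\theta_1\Delta_{p_1})$ on the right pin $p_1$ and $p_2$ to $\log_2|n|$ and $\log_2|n'|$ (roughly) once you compress with $\Pi_n(\cdot)\Pi_{n'}$, and because $\Omega_{p_1,p_2}u$ is supported on modes $\lesssim 2^{\min(p_1,p_2)}$, one gets the exact identity
\[
\Pi_n W(u)\Pi_{n'}=\Pi_n W\bigl(S(n,n')u\bigr)\Pi_{n'}.
\]
Inequality \eqref{E3.3.8} is then an immediate consequence of \eqref{E3.3.3} applied with $u$ replaced by $S(n,n')u$. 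In your version, $W(u)=\int_0^1(1-t)\widetilde W(tu)\,dt$ has no such identity: the argument $tu$ is global in frequency, so $\Pi_n W(u)\Pi_{n'}$ genuinely depends on the high modes of $u$.

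Your proposed workaround does not close the gap. The identity \eqref{E3.3.2} is an $x$-translation-invariance relation that trades a spatial derivative on a fixed slot $w_i$ for a $u$-derivative of the form $L$; iterating it against $\Pi_n(\cdot)\Pi_{n'}$ converts multiplication by $(n_j-n'_j)$ into a $\partial_x u$ acting as an argument, which is how $\langle n-n'\rangle^{-N'}$ arises. But there is no parallel mechanism to convert the \emph{frequency magnitude of the argument} $u$ (as opposed to the frequency difference $n-n'$ of the pinned slots) into decay. Splitting $\partial^{\kappa_j}u=S(n,n')\partial^{\kappa_j}u+(I-S(n,n'))\partial^{\kappa_j}u$ and applying more of \eqref{E3.3.2} to the high piece will not make that piece small; the operator $L$ is only known to be continuous on Sobolev spaces, not pseudo-local in the argument, and without the dyadic sandwich you cannot force $\Pi_n(\cdot)\Pi_{n'}$ to kill the $(I-S(n,n'))$-contributions. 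In short, the $S(n,n')$ factor in \eqref{E3.3.8} must be baked into the construction of $W$ itself (as the paper does), not recovered afterwards by integration by parts. The correct move is to replace your Taylor representation with the paper's double dyadic telescoping; once $W$ is built that way, the estimate is exactly the argument you outline.

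A secondary remark: your reduction ``assume $\psi_2(0,\omega,\epsilon)=0$ and $\nabla_u\psi_2(0,\omega,\epsilon)=0$ by absorbing the affine part into $f$'' is not justified as stated. From \eqref{e1.2}, $\psi_2$ involves $\Phi_2(u_1,-\epsilon\tilde L_\omega^{-1}f_2+\epsilon G,\epsilon)$, so setting $u_1=0$ does not set the second argument to zero; condition \eqref{E1.2} gives vanishing of $\partial_z^k F$ at $z=0$, not at $z=-\epsilon\tilde L_\omega^{-1}f_2+\epsilon G$. The telescoping construction avoids the Taylor-at-zero formula entirely and does not need this normalization for \eqref{E3.3.8}; the quadratic identity \eqref{E3.3.7} at $u=0$ is handled uniformly by the telescoping starting from $S_0u=0$.
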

\begin{proof}
Without loss of generality, we restrict our attention on that $W$ depends only on $u$. Definition \eqref{e1.2} shows that $\mathrm{D}^k_u\psi_2$ with $k\leq2$ is continuous. It can be seen that
\begin{equation*}
 S_pu\stackrel{\mathcal{H}^\sigma}{\longrightarrow} u\quad \text{as}\quad p\rightarrow +\infty
 \end{equation*}
using the definition in \eqref{E3.3.5}. Then
\begin{align*}
\psi_2(u)&=\sum\limits_{p_1=0}^{+\infty}\big(\psi_2(S_{p_1+1}u)-\psi_2(S_{p_1}u)\big)\\
&
=\sum\limits_{p_1=0}^{+\infty}\int_0^1(\mathrm{D}_u\psi_2)
(S_{p_1}u+\theta_1\Delta_{p_1}u)~\mathrm{d}\theta_1\cdot\Delta_{p_1}u\\
&
=\sum\limits_{p_1=0}^{+\infty}\sum\limits_{p_2=0}^{+\infty}\int_0^1\int_0^1(\mathrm{D}^2_u\psi_2)
\big(\Omega_{p_1,p_2}(\theta_1,\theta_2)u\big)~\mathrm{d}\theta_2\cdot\big(\Delta_{p_2}(S_{p_1}+\theta_1\Delta_{p_1})u,
\Delta_{p_1}u\big)~{\rm d}\theta_1,
\end{align*}
where $\Omega_{p_1,p_2}(\theta_1,\theta_2)=\Pi^2_{l=1}(S_{p_l}+\theta_l\Delta_{p_l})$.
According to Lemma \ref{lemma3.3.1} and discussion before it ,
there exists a symmetric operator $\widetilde{W}(\Omega_{p_1,p_2}(\theta_1,\theta_2)u)$ satisfying \eqref{E3.3.3}  such that
\begin{equation*}
\mathrm{D}^2\psi_2(\Omega_{p_1,p_2}(\theta_1,\theta_2)u)\cdot(w_1,w_2)=\int\big(\widetilde{W}(\Omega_{p_1,p_2}(\theta_1,\theta_2)u)
w_1\big)w_2~{\rm d} t {\rm d}x.
\end{equation*}
Thus we can  get \eqref{E3.3.7}, where
\begin{align*}
W(u)=&\frac{1}{2}\sum\limits_{p_1}\sum\limits_{p_2}\int_0^1\int_0^1\Delta_{p_1}\big(\widetilde{W}
(\Omega_{p_1,p_2}(\theta_1,\theta_2)u)\Delta_{p_2}(S_{p_1}+\theta_1\Delta_{p_1})\big)~{\rm d}\theta_1 {\rm d}\theta_2\\
&+\frac{1}{2}\sum\limits_{p_1}\sum\limits_{p_2}\int_0^1\int_0^1\Delta_{p_2}(S_{p_1}+\theta_1\Delta_{p_1})
\big(\widetilde{W}(\Omega_{p_1,p_2}(\theta_1,\theta_2)u)\Delta_{p_1}\big)~{\rm d}\theta_1{\rm d}\theta_2.
\end{align*}
It is clear that which $W(u)$ is a symmetric operator. The definition of $S(n,n')$ establishes  that
\begin{equation*}
\Pi_nW(u)\Pi_{n'}=\Pi_nW(S(n,n')u)\Pi_{n'}.
 \end{equation*}
 Combining this with \eqref{E3.3.3},  it leads to inequality \eqref{E3.3.8}. In addition, if $N'$ is larger or equal to $d+1$ to guarantee that $\sigma_0+N'/2\leq\sigma$,
 then $u, h_{l'}$ are in $\mathcal{H}^\sigma$. As a consequence, the right-hand side of \eqref{E3.3.8} is bounded by $C\langle n-n'\rangle^{-N'}$. This gives that $W(u)$ is bounded from $\mathcal{H}^0$ to $\mathcal{H}^0$.
\end{proof}

\begin{prop}\label{proposition3.3.1}
Let $q>0$, $\sigma\in\mathbb{R}$ with $\sigma\geq\sigma_0+(d+1)/2$, $\gamma_0\in(0,1]$ with $\gamma_0$ small enough, and
\begin{equation*}\label{E3.3.10}
r=\sigma-\sigma_0-(d+1)/2.
\end{equation*}
There is a symmetric element $\tilde{V}\in\Sigma^0(0,\sigma,q)$ and an element $\tilde{V}\in \mathcal{R}^r_0(0,\sigma,q)$, where $\tilde{V},\tilde{R}$ are also  $C^1$ in $(\omega,\epsilon)$ respectively, such that for all $u\in B_q(\mathcal{H}^\sigma)$, $\epsilon\in[0,\gamma_0]$, $\omega\in[1,2]$
\begin{equation*}\label{E3.3.11}
\nabla_u \psi_2(u,\omega,\epsilon)=\tilde{V}(u,\omega,\epsilon)u+\tilde{R}(u,\omega,\epsilon)u.
\end{equation*}
\end{prop}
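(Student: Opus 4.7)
The plan is to start from Lemma \ref{lemma3.3.2}, which gives a symmetric operator $W(u,\omega,\epsilon)$ with $\psi_2(u,\omega,\epsilon)=\int(Wu)u\,dt\,dx$ and in particular forces $\psi_2(0,\omega,\epsilon)=0$ and $\nabla_u\psi_2(0,\omega,\epsilon)=0$. Coupling this with the construction behind Lemma \ref{lemma3.3.1} applied to $\psi_2$, I obtain a symmetric $\mathcal{L}(\mathcal{H}^0)$-valued operator $\tilde{W}(u,\omega,\epsilon)$ representing the Hessian, i.e.
\[ D_u^2\psi_2(u,\omega,\epsilon)(h_1,h_2)=\int_{\mathbb{T}\times\mathbb{T}^d}(\tilde{W}(u,\omega,\epsilon)h_1)h_2\,dt\,dx, \]
satisfying the block bounds \eqref{E3.3.8} uniformly on $B_q(\mathcal{H}^\sigma)\times[1,2]\times[0,\gamma_0]$ and $C^1$ in $(\omega,\epsilon)$. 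Since $\nabla_u\psi_2(0,\omega,\epsilon)=0$, Taylor's formula then yields the factorization
\[ \nabla_u\psi_2(u,\omega,\epsilon)=\int_0^1 \tilde{W}(\theta u,\omega,\epsilon)u\,d\theta = T(u,\omega,\epsilon)u,\qquad T(u,\omega,\epsilon):=\int_0^1 \tilde{W}(\theta u,\omega,\epsilon)\,d\theta, \]
and the operator $T(u,\omega,\epsilon)$ is symmetric because $\tilde{W}(\theta u,\omega,\epsilon)$ is symmetric for every $\theta$.

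Next I would split $T$ along the standard para-differential cut-off,
\[ T_{\rm D}(u,\omega,\epsilon):=\sum_{n,n'}\Pi_n T(u,\omega,\epsilon)\Pi_{n'}\mathds{1}_{|n-n'|\leq\frac{1}{10}(|n|+|n'|)},\qquad T_{\rm OD}:=T-T_{\rm D}, \]
and set $\tilde{V}:=T_{\rm D}$ and $\tilde{R}:=T_{\rm OD}$. Both pieces remain symmetric because the indicator is symmetric in $(n,n')$. For $\tilde{V}$, integrating the block bound \eqref{E3.3.8} in $\theta$ and combining with the chain rule applied to $\mathrm{D}_u^j[\tilde{W}(\theta u)]$ produces, for each fixed $M$ with $(d+1)/2\leq M\leq \sigma-\sigma_0$, a bound of the form
\[ \|\Pi_n\mathrm{D}_u^j T_{\rm D}\cdot(w_1,\ldots,w_j)\Pi_{n'}\|_{\mathcal{L}(\mathcal{H}^0)}\leq C\langle n-n'\rangle^{-2M}\Big(\prod_{l=1}^j\|w_l\|_{\mathcal{H}^{\sigma_0+M}}\Big)\mathds{1}_{|n-n'|\leq\frac{1}{10}(|n|+|n'|)}, \]
which is precisely the estimate of Definition \ref{definition3.1.1} with $\chi=0$ and $N=0$. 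The $S(n,n')$-cut-off appearing inside \eqref{E3.3.8} is used to absorb the polynomial $Q^l_{N_0}$ into a harmless power of $\|u\|_{\mathcal{H}^\sigma}$, and the symmetry survives the cut-off.

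For $\tilde{R}$, on the complementary region $|n-n'|>\frac{1}{10}(|n|+|n'|)$ one has $\langle n-n'\rangle\gtrsim 1+\max(|n|,|n'|)$, so taking $N'$ large in \eqref{E3.3.8} converts the decay into a polynomial gain of order $r=\sigma-\sigma_0-(d+1)/2$ in $\max(|n|,|n'|)$. Using the equivalent norm in \eqref{E2.2.6} in the spirit of the proof of Lemma \ref{lemma3.1.1} (and of Remark \ref{remark3.1.2}, which gives the inclusion $\Sigma^{-r}\subset\mathcal{R}^r_0$), one deduces that $T_{\rm OD}$ and all its $u$-derivatives send $\mathcal{H}^s$ into $\mathcal{H}^{s+r}$ with the bounds required by Definition \ref{definition3.1.2}, so $\tilde{R}\in\mathcal{R}^r_0(0,\sigma,q)$. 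The $C^\infty$ dependence on $u$ and the $C^1$ dependence on $(\omega,\epsilon)$ of $\tilde{V}$ and $\tilde{R}$ descend from those of $\tilde{W}$ through the $\theta$-integral and the frequency cut-off.

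The main obstacle is the tripartite bookkeeping linking $N'$, $M$, and $r$ in \eqref{E3.3.8}: one has to pick $N'=2M$ large enough that on the diagonal region the estimate of Definition \ref{definition3.1.1} holds for the chosen $M$, while simultaneously ensuring that on the off-diagonal region the resulting polynomial decay in $\max(|n|,|n'|)$ is at least $r$. Together with the requirement that the polynomial $Q^l_{N_0}((\|S(n,n')\partial^k u\|_{\mathcal{H}^{\sigma_0}})^\tau)$ be controlled by $\|u\|_{\mathcal{H}^\sigma}$ and each $\|S(n,n')w_l\|_{\mathcal{H}^{\sigma_0+N_l/2}}$ by $\|w_l\|_{\mathcal{H}^\sigma}$, this pins down the maximal smoothing order to exactly the stated value $r=\sigma-\sigma_0-(d+1)/2$.
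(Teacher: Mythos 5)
The student's proposal takes a genuinely different route from the paper.  The paper uses the quadratic-form representation $\psi_2(u)=\int(W(u)u)u$ of Lemma~\ref{lemma3.3.2}, differentiates it once to get $\nabla_u\psi_2 = 2W(u)u + (\text{term involving } D_uW)$, handles the $D_uW$ contribution by a duality argument showing it is a separate smoothing operator $\tilde R''$, and then splits $2W(u)$ along the para-differential cut-off.  You instead propose to write $\nabla_u\psi_2(u)=T(u)u$ with $T(u)=\int_0^1\tilde W(\theta u)\,d\theta$ via Taylor's formula, where $\tilde W$ is the Hessian operator produced by Lemma~\ref{lemma3.3.1}, and then split $T$.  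The Taylor route is structurally cleaner because the $D_uW$ term never appears separately, but the estimates it needs are different from the ones you invoke.

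There is a genuine gap.  You assert that the Hessian $\tilde W$ satisfies the localized block bounds \eqref{E3.3.8}, and your bookkeeping (``the $S(n,n')$-cut-off \dots is used to absorb the polynomial $Q^l_{N_0}$'') relies on that localization.  But \eqref{E3.3.8} is established only for the operator $W$ constructed in Lemma~\ref{lemma3.3.2} via the Littlewood--Paley telescoping, whose essential structural property is $\Pi_n W(u)\Pi_{n'}=\Pi_n W(S(n,n')u)\Pi_{n'}$; the Hessian $\tilde W$ from Lemma~\ref{lemma3.3.1} does not have this property and only satisfies the non-localized estimate \eqref{E3.3.3}, in which the factors $\|\partial^k u\|_{\mathcal{H}^{\sigma_0}}$ and $\|h_{l'}\|_{\mathcal{H}^{\sigma_0+N_{l'}/2}}$ are not cut off.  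Without the $S(n,n')$-factors, the right-hand side of the estimate is finite for $u\in B_q(\mathcal{H}^\sigma)$ only for $N'\le 2(\sigma-\sigma_0)$, so the ``absorb the polynomial via the cut-off'' step is inapplicable.  (Note that the paper's own proof of the proposition deliberately takes $N'=2(\sigma-\sigma_0+1)>2(\sigma-\sigma_0)$, precisely because the $S(n,n')$-localization allows it to trade the resulting growth in $\inf(|n|,|n'|)$ against $|n-n'|$ off the diagonal via \eqref{E4.18}.)  Your argument therefore needs to be reworked: either prove that the Hessian itself enjoys the localized estimate — which generically it does not — or redo the diagonal/off-diagonal estimates using only \eqref{E3.3.3} with $N'\le 2(\sigma-\sigma_0)$ and track the resulting arithmetic.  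The latter actually reproduces the same smoothing order $r=\sigma-\sigma_0-(d+1)/2$, but that is not what you have written, and the reasoning in the current draft is invalid as stated.
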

\begin{proof}
For $h_1\in\mathcal{H}^{+\infty}$, it follows from
Lemma \ref{lemma3.3.2} that
\begin{equation}\label{E3.3.12}
{\rm D}_u\psi_2(u,\omega,\epsilon)\cdot h_1=2\int(W(u,\omega,\epsilon)u)h_1~{\rm d}t{\rm d}x
+\int(({\rm D}_uW(u,\omega,\epsilon)\cdot h_1)u)u~{\rm d}t{\rm d}x.
\end{equation}

On one hand, let us study the first term  in the right hand side of \eqref{E3.3.12}. Define
\begin{align*}
&\tilde{V}(u,\omega,\epsilon)=2\sum\limits_{n,n'}\mathds{1}_{|n-n'|\leq \frac{1}{10}(|n|+|n'|)}\Pi_nW(u,\omega,\epsilon)\Pi_{n'},\\
&\tilde{R}'(u,\omega,\epsilon)=2\sum\limits_{n,n'}\mathds{1}_{|n-n'|> \frac{1}{10}(|n|+|n'|)}\Pi_nW(u,\omega,\epsilon)\Pi_{n'}.
\end{align*}
With the help of formula \eqref{E3.3.8}, if $\frac{|k|}{2}\leq \frac{N'}{2}=M(\text{see}~\text{Definition}~\ref{definition3.1.1})\leq\sigma-\sigma_0$, then we have
\begin{equation*}
\|\partial^ k
S(n,n')u\|_{\mathcal{H}^{\sigma_0}}\leq C\|u\|_{\mathcal{H}^{\sigma}},\quad\|S(n,n')h_{l'}\|_{\mathcal{H}^{\sigma_0+\frac{N_{l'}}{2}}}\leq
C\|h_{l'}\|_{\mathcal{H}^{\sigma_0+M}}\leq
C\|h_{l'}\|_{\mathcal{H}^{\sigma}}
\end{equation*}
for some constant $C>0$. This reads that $\tilde{V}$ satisfies \eqref{E3.1.1}. Then
$\tilde{V}\in\Sigma^0(0,\sigma,q)$. 
Furthermore, the definition of $S(n,n')$ indicates that
\begin{equation}\label{E4.18}
\|S(n,n')w\|_{\mathcal{H}^{\sigma_0+\frac{\beta}{2}}}\leq C(1+\inf(|n|,|n'|))^{2\max\left\{\frac{\beta}{2}+\sigma_0-\sigma,0\right\}}\|w\|_{\mathcal{H}^\sigma}
\end{equation}
for some constant $C>0$. Combining this with the inequality $|n-n'|> \frac{1}{10}(|n|+|n'|)$ and formulae \eqref{E3.3.8} and \eqref{E4.18}, if $N'=2(\sigma-\sigma_0+1)$(see formula \eqref{E3.3.8}), then it yields that
\begin{align}
\|\Pi_n\partial^{\eta_1}_\omega\partial^{\eta_2}_\epsilon\mathrm{D}^l_u\tilde{R}'(u,\omega,\epsilon)\Pi_{n'}\|_{\mathcal{L}(\mathcal{H}^0)}\leq& C(1+|n|+|n'|)^{-N'}(1+\inf(|n|,|n'|))^{2\left(\frac{N'}{2}+\sigma_0-\sigma\right)}\prod\limits_{l'=1}^{l}\|h_{l'}\|_{\mathcal{H}^{\sigma}}\nonumber\\
\leq&C(1+|n|+|n'|)^{-2\left(\sigma-\sigma_0-\frac{(d+1)}{2}\right)}\langle n-n'\rangle^{-(d+1)}\prod\limits_{l'=1}^{l}\|h_{l'}\|_{\mathcal{H}^{\sigma}}.\nonumber
\end{align}
This gives that, for all $s\geq\sigma_0$, fixed $N'=2(\sigma-\sigma_0+1)$,
\begin{equation*}
\|\partial^{\eta_1}_\omega\partial^{\eta_2}_\epsilon \mathrm{D}^l_u\tilde{R}'(u,\omega,\epsilon)
\cdot(h_1,\cdots,h_l)\|_{\mathcal{L}(\mathcal{H}^s,\mathcal{H}^{s+r})}\leq C\prod\limits_{l'=1}^{l}\|h_{l'}\|_{\mathcal{H}^{\sigma}},
\end{equation*}
where $r=\sigma-\sigma_0-\frac{(d+1)}{2}$. As a consequence $\tilde{R}'\in\mathcal{R}^r_0(0,\sigma,q)$.

On the other hand, we study the second term  in the right hand side of  \eqref{E3.3.12}. For any $h,w\in\mathcal{H}^{+\infty}$, assume there exists an operator $\tilde{R}''(u,\omega,\epsilon)$ with
\begin{align*}
\int\big(({\rm D}_uW(u,\omega,\epsilon)\cdot h)u\big)w~{\rm d}t{\rm d}x=\int\big(\tilde{R}''(u,\omega,\epsilon)w\big)h~{\rm d}t{\rm d}x.
\end{align*}
From formulae \eqref{E3.3.8} and \eqref{E4.18}, if $N'\leq2(\sigma-\sigma_0)$, then it follows that, for $l=1$, $u\in B_q(\mathcal{H}^\sigma)$, $s\geq\sigma_0$,
\begin{align*}
\|\Pi_n{\rm D}_uW(u,\omega,\epsilon)
&\cdot h\Pi_{n'}\|_{\mathcal{L}(\mathcal{H}^0)}\leq C\langle n-n'\rangle^{-N'}(1+\inf(|n|,|n'|))^{2(N'/2+s+r+\sigma_0)}\|h\|_{\mathcal{H}^{-s-r}},
\end{align*}
where $w \in\mathcal{H}^s$ and $h\in\mathcal{H}^{-s-r}$. In addition, it is easy to obtain that
\begin{equation*}
 \|\Pi_{n}w\|_{\mathcal{H}^0}\leq c_n\langle
n\rangle^{-2s}\|w\|_{\mathcal{H}^s},~~\quad
 \|\Pi_{n'}u\|_{\mathcal{H}^0}\leq c'_{n'}q\langle
n'\rangle^{-2\sigma}
\end{equation*}
for $l^2$-sequences $(c_n)_n$, $(c'_{n'})_{n'}$. Decomposing $u=\Sigma_{n'}\Pi_{n'}u$ and $w=\Sigma_{n}\Pi_{n}w$, the following estimate
\begin{align}
\|\big(({\rm D}_uW(u,\omega,\epsilon)\cdot h)u\big)w\|_{\mathcal{H}^0}\leq&
\sum\limits_n\sum\limits_{n'}\|\Pi_n{\rm D}_uW(u,\omega,\epsilon)\cdot h\Pi_{n'}\|_{\mathcal{L}(\mathcal{H}^0)}
\|\Pi_{n'}u\|_{\mathcal{H}^0}\|\Pi_{n}w\|_{\mathcal{H}^0}\nonumber\\
\leq& C\langle n-n'\rangle^{-N'}\left(1+\inf(|n|,|n'|)\right)^{2(N'/2+s+r+\sigma_0)}\nonumber\\
&\times c_n c'_{n'}\langle n\rangle^{-2s}
\langle n'\rangle^{-2\sigma}\|w\|_{\mathcal{H}^s}\|h\|_{\mathcal{H}^{-s-r}}\label{E3.3.15}
\end{align}
holds.
Taking $r=\sigma-\sigma_0-\frac{N'}{2}$ with $N'=d+1$, we verify that the sum in $n,n'$ of \eqref{E3.3.15} is convergent. Then this leads to that $\tilde{R}''\in\mathcal{L}(\mathcal{H}^s,\mathcal{H}^{s+r})$. The argument similar to the case of $j=0$ to get the estimate of
 $\|\partial^{\eta_1}_\omega\partial^{\eta_2}_\epsilon \partial^l_u R_2(u,\omega,\epsilon)\cdot(h_1,\cdots,h_l)\|_{\mathcal{L}(\mathcal{H}^s,
 \mathcal{H}^{s+r})}$. Therefore we get $\tilde{R}''\in\mathcal{R}^r_{0}(0,\sigma,q)$.
\end{proof}

\section{Diagonalization of the problem}

\subsection{Spaces of diagonal and non diagonal operators}
Owing to
Proposition $\ref{proposition3.3.1}$, the nonlinearity in \eqref{E3.3.4} can be decomposed as the sum of the action of the para-differential potential $\tilde{V}(u,\omega,\epsilon)$ on $u$ and of a remainder.
Thus Eq. \eqref{E3.3.4} can be reduced to
\begin{equation}\label{E3.3.17}
{L}_\omega u+\epsilon {V}(u,\omega,\epsilon)u=\epsilon \tilde{R}(u,\omega,\epsilon)u+\epsilon f,
\end{equation}
where ${L}_\omega:=-\tilde{{L}}_\omega$, $V(u,\omega,\epsilon):=-\tilde{V}(u,\omega,\epsilon)$.
Furthermore, ${V}\in\Sigma^0(0,\sigma,q)$ is symmetric, and $\tilde{R}\in\mathcal{R}^r_0(0,\sigma,q)$. Remark that the symmetric operator ${V}$ is also self-adjoint.

\begin{defi}
Let $\sigma\in\mathbb{R},N\in\mathbb{N}$, with $\sigma\geq \sigma_0+2N+(d+1)/2,\chi\in\mathbb{R},q>0$.
\\ $(i)$  Denote by $\Sigma^\chi_{\rm D}(N,\sigma,q)$ the subspace of $\Sigma^\chi(N,\sigma,q)$ constituted by elements $A(u,\omega,\epsilon)$ satisfying
$\widetilde{{\Pi}}_\alpha A\widetilde{{\Pi}}_{\alpha'}\equiv0$  for any $\alpha,\alpha'\in\mathcal{A}$ with $\alpha\neq \alpha'$.
\\ $(ii)$ Denote by $\Sigma^\chi_{\rm ND}(N,\sigma,q)$ the subspace of $\Sigma^\chi(N,\sigma,q)$ constituted by elements $A(u,\omega,\epsilon)$ satisfying
$\widetilde{{\Pi}}_\alpha A\widetilde{{\Pi}}_\alpha\equiv0$  for any $\alpha\in\mathcal{A}$.
\end{defi}
It is straightforward to see that $\Sigma^\chi(N,\sigma,q)=\Sigma^\chi_{\rm D}(N,\sigma,q)\bigoplus \Sigma^\chi_{\rm ND}(N,\sigma,q)$.
\begin{defi}
Let $\rho'=1+\rho/2>1$.
One denotes by $\mathcal{L}^\chi_{\rho'}(N,\sigma,q)$ the subspace of $\Sigma^\chi(N,\sigma,q)$ constituted by those elements $A(u,\omega,\epsilon)$ with
\begin{equation}\label{E4.1.3}
A(u,\omega,\epsilon)\in\Sigma^{\chi-\rho'}(N,\sigma,q).
\end{equation}
Furthermore, one denotes by $\mathcal{L}'^\chi_{\rho'}(N,\sigma,q)$ the subspace of $\Sigma^\chi(N,\sigma,q)$ constituted by  those elements $A(u,\omega,\epsilon)$ satisfying \eqref{E4.1.3} and
${A(u,\omega,\epsilon)^*}=-A(u,\omega,\epsilon)$.
\end{defi}
\begin{rema}\label{remark4.1.1}
Assume $\sigma\geq\sigma_0+2N+(d+1)/2+\max\{\chi_1+\chi_2-2\rho',0\}$. If $A\in
\mathcal{L}^{\chi_1}_{\rho'}(N,\sigma,q)$, $B\in \mathcal{L}^{\chi_2}_{\rho'}(N,\sigma,q)$, due to Proposition \ref{proposition3.1.1} $(ii)$, then $A\circ B$ is the sum of an element of $\mathcal{L}^{\chi_1+\chi_2-\rho'}_{\rho'}(N,\sigma,q)$, and  an element of $\mathcal{R}^r_0(N,\sigma,q)$ with
$r=\sigma-\sigma_0-2N-(d+1)/2-(\chi_1+\chi_2-2\rho')$.
\end{rema}
\subsection{A class of sequences}

Assume there exists  a class of sequences $S_j(u,\omega,\epsilon),0\leq j\leq N$
satisfying that $S_j$ is written as $S_j=S_{1,j}+S_{2,j}$ with
\begin{equation}\label{E4.2.4}
\begin{split}
&S_{1,j}\in \mathcal{L'}^{-j\rho'}_{\rho'}(j,\sigma,q), \quad [\Delta^2,S_{1,j}]\in\Sigma^{-j\rho'}(j,\sigma,q), \quad j=0,\cdots, N,\\
&S_{2,j}\in \mathcal{L'}^{-(j+1)\rho'}_{\rho'}(j,\sigma,q), \quad [\Delta^2,S_{2,j}]\in\Sigma^{-(j+1)\rho'}(j,\sigma,q), \quad j=0,\cdots, N-1,\\
&S_{2,N}=0.
\end{split}
\end{equation}
Let us check some properties of the class of sequences $S_j(u,\omega,\epsilon),0\leq j\leq N$  satisfying \eqref{E4.2.4}.
\begin{lemm}\label{lemma4.2.1}
Let $r,\sigma,N$ satisfy $(N+1)\rho'\geq r+2$ and $\sigma\geq \sigma_0+2(N+1)+(d+1)/2+r$.
Set \[
S(u,\omega,\epsilon)=\sum\limits^N_{j=0}S_j(u,\omega,\epsilon),\]
 where $S_j=S_{1,j}+S_{2,j}$ and $S_{1,j},S_{2,j}$ satisfy \eqref{E4.2.4}. The following two facts hold:
\\
{(i)}  One may find, for $1\leq j\leq N$, $A_j\in\Sigma^{-j\rho'}(j-1,\sigma,q)$ depending only on $S_l,l\leq j-1$ and satisfying $A_j^*=A_j$, one may find $R\in\mathcal{R}^r_2(N+1,\sigma,q)$, such that
\begin{equation}\label{E4.2.7}
[S^*,{L}_\omega]S+S^*[{L}_\omega,S]=A^N+R,
\end{equation}
where $A^N=\Sigma^N_{j=0}A_j$ with $A_0=0$, $[S^*,{L}_\omega]=S^*{L}_\omega-{L}_\omega S^*$.\\
{(ii)} One may find $A_j,1\leq j\leq N$ as (i), $B_j\in\mathcal{L}^{-(j+1)\rho'}_{\rho'}(j,\sigma,q),0\leq j\leq N-1$, satisfying $[\Delta^2,B_j]\in\Sigma^{-(j+1)\rho'}(j,\sigma,q)$, $B_j$ depending only on $S_{1,l},l\leq j,S_{2,l},l\leq j-1$, and $R\in\mathcal{R}^r_2(N+1,\sigma,q)$, such that
\begin{equation*}\label{E4.2.8}
S^*{L}_\omega S=A^N+{(B^{N-1})^*}{L}_\omega+{L}_\omega B^{N-1}+R,
\end{equation*}
where $B^{N-1}=\Sigma^{N-1}_{j=0}B_j$.
\end{lemm}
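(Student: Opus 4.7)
The plan is to expand $S=\sum_{j=0}^N(S_{1,j}+S_{2,j})$ bilinearly, compute each commutator using the decomposition $L_\omega=\omega^2\partial_{tt}+\Delta^2+m$, and collect terms by their total smoothing order. The anti-self-adjointness built into \eqref{E4.2.4} yields $S^*=-S$, so part (i)'s expression simplifies to the double commutator
\[
[S^*,L_\omega]S+S^*[L_\omega,S]=[S,[S,L_\omega]].
\]
For each $S_j$, the commutator $[S_j,L_\omega]$ splits as $[\omega^2\partial_{tt},S_j]+[\Delta^2,S_j]$ (the $m$-commutator vanishes); by Remark \ref{remark3.1.1}, $[\omega^2\partial_{tt},S_j]$ preserves the operator order $-(j+1)\rho'$ at the cost of raising the $N$-index to $j+1$, while \eqref{E4.2.4} gives $[\Delta^2,S_{1,j}]\in\Sigma^{-j\rho'}(j,\sigma,q)$ as the principal contribution and $[\Delta^2,S_{2,j}]\in\Sigma^{-(j+1)\rho'}(j,\sigma,q)$ as a one-step subprincipal piece. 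Thus $[S_j,L_\omega]$ has principal order $-j\rho'$ with $N$-index $j$.

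For part (i), expanding $S=\sum_k S_k$ and composing via Proposition \ref{proposition3.1.1}(ii), each $[S_k,[S_j,L_\omega]]$ yields a principal symbol of order $-(j+k+1)\rho'$ with $N$-index at most $\max(j,k)$, plus a smoothing remainder in $\mathcal{R}^{\,\cdot}_0$. I group contributions by weight $p:=j+k+1$. Those with $1\le p\le N$ assemble into $A_p\in\Sigma^{-p\rho'}(p-1,\sigma,q)$, which depends only on $S_l$ with $l\le p-1$ since only pairs with $j,k\le p-1$ can contribute. Contributions with $p\ge N+1$ lie in $\Sigma^{-(N+1)\rho'}(N+1,\sigma,q)$; the hypothesis $(N+1)\rho'\ge r+2$ together with Remarks \ref{remark3.1.2} and \ref{remark3.1.5} places them in $\mathcal{R}^{r+2}_0(N+1,\sigma,q)\subset\mathcal{R}^r_2(N+1,\sigma,q)$, i.e.\ into $R$. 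Self-adjointness $A_p^*=A_p$ is automatic because $[S,[S,L_\omega]]$ is self-adjoint (use $S^*=-S$ and $L_\omega^*=L_\omega$); any residual anti-self-adjoint piece in my chosen representative is symmetrized and the correction is reabsorbed into $R$.

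For part (ii) I use the algebraic identity
\[
2\,S^*L_\omega S=\{L_\omega,S^*S\}+[S^*,L_\omega]S+S^*[L_\omega,S],\qquad\{A,B\}:=AB+BA,
\]
so part (i) already handles the last two terms. For the anticommutator, expand $S^*S=\sum_{j,k}S_j^*S_k$: Proposition \ref{proposition3.1.1}(ii) decomposes each product into a principal symbol of order $-(j+k+2)\rho'$ plus a smoothing remainder. Grouping by weight $\ell:=j+k$, the principal parts with $\ell\le N-1$ assemble into $B_\ell\in\mathcal{L}^{-(\ell+1)\rho'}_{\rho'}(\ell,\sigma,q)$ depending only on $S_{1,l}$ for $l\le\ell$ and on $S_{2,l}$ for $l\le\ell-1$ (the $S_{2,l}$ factor shifts the weight by one); the Leibniz rule applied to \eqref{E4.2.4} then yields $[\Delta^2,B_\ell]\in\Sigma^{-(\ell+1)\rho'}(\ell,\sigma,q)$. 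Since $S^*S$ is self-adjoint, $B^{N-1}$ can be taken self-adjoint (its anti-self-adjoint defect is of strictly higher order and reabsorbed into $R$), so $\{L_\omega,B^{N-1}\}=L_\omega B^{N-1}+(B^{N-1})^*L_\omega$. Composing $L_\omega$ with the smoothing remainder from the product $S^*S$ costs only a bounded number of orders, still within the budget $(N+1)\rho'\ge r+2$, so those leftovers also join $R$.

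The main obstacle will be the simultaneous bookkeeping of three parameters through every commutation and composition: the operator order (in units of $\rho'$), the $N$-index counting $t$-derivatives, and the self-adjointness type. In particular, one must check at each step that the smoothing remainders produced by Proposition \ref{proposition3.1.1}(ii) carry the correct $r$ and $\nu=2$ to land in $\mathcal{R}^r_2(N+1,\sigma,q)$; the hypothesis $\sigma\ge\sigma_0+2(N+1)+(d+1)/2+r$ is precisely the regularity reserve that legitimizes every such application up to the highest-weight interactions ($p=2N+1$ for part (i), $\ell=2N$ for part (ii)) arising in the expansion.
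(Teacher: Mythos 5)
Your argument is correct and follows essentially the same strategy as the paper: split $L_\omega$ into the $\Delta^2$ and $\partial_{tt}$ pieces, expand bilinearly in the $S_{i,j}$, decompose each composition via Proposition~\ref{proposition3.1.1}(ii), collect principal parts by weight $p$ into $A_p$ (and $B_\ell$), and dump everything of weight $\geq N+1$ plus all smoothing tails into $R$ using $(N+1)\rho'\geq r+2$ with Remarks~\ref{remark3.1.2} and~\ref{remark3.1.5}. The one genuine variation is cosmetic: you exploit $S^*=-S$ (which indeed follows from the $\mathcal{L}'$ condition in \eqref{E4.2.4}) to rewrite part (i) as the double commutator $[S,[S,L_\omega]]$, making self-adjointness of $A^N$ manifest, whereas the paper keeps $S^*$ explicit and reads off self-adjointness from the symmetrized form of $\widehat A_j,\widehat B_j$ directly. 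For part (ii) your anticommutator identity is exactly the paper's split $S^*L_\omega S=\tfrac12\bigl(S^*[L_\omega,S]+[S^*,L_\omega]S\bigr)+\tfrac12\bigl(S^*SL_\omega+L_\omega S^*S\bigr)$, and your bookkeeping of the weight shift caused by the $S_{2,l}$ factors matches \eqref{E4.2.14}. One small remark: the self-adjointness of the $B_\ell$ does not need to be "reabsorbed into $R$" — the principal-part cutoff in Proposition~\ref{proposition3.1.1}(ii) is symmetric in $(n,n')$, so the principal part of the self-adjoint operator $S^*S$ is automatically self-adjoint; but this does not affect the validity of your argument.
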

\begin{proof}
{(i)} Since $[{L}_\omega,S]=\omega^2[\partial_{tt},S]+[\Delta^2,S]$, the left hand side of \eqref{E4.2.7} equals to
\begin{equation*}
{S^*}[\Delta^2,S]+[S^*,\Delta^2]S+\omega^2{S^*}[\partial_{tt},S]+\omega^2[S^*,\partial_{tt}]S.
\end{equation*}
Let $\widehat{A}:={S^*}[\Delta^2,S]+[S^*,\Delta^2]S$. We
write $\widehat{A}=\Sigma^{2N+1}_{j=1}\widehat{A}_j$, where
\begin{equation}\label{E4.2.10}
\widehat{A}_j:=\sum\limits_{\stackrel{j_1+j_2=j-1}{0\leq j_1,j_2\leq
N}}\left([S^*_{j_1},\Delta^2]S_{j_2}+{S^*_{j_2}}[\Delta^2,S_{j_1}]\right).
\end{equation}
It follows from  \eqref{E4.2.4} and Proposition \ref{proposition3.1.1} (ii) that $\widehat{A}_j$ may be written as the sum $A_j+R_j$, where
\begin{equation*}
A_j\in\Sigma^{-j\rho'}(\min\{N,j-1\},\sigma,q), \quad ~R_j\in\mathcal{R}^{r_1}_0(\min\{N,j-1\},\sigma,q)
\end{equation*}
with
$r_1=\sigma-\sigma_0-2N-(d+1)/2+j\rho'\geq r$.
Formula \eqref{E4.2.10} implies that $A_j$ depends only on $S_l,l\leq j-1$  and that $A_j$ is self-adjoint. Furthermore,
$A_j$ is in $\Sigma^{-(N+1)\rho'}(N,\sigma,q)$ for $j\geq N+1$, hence in $\mathcal{R}^r_0(N,\sigma,q)$ thanks to the inequality $(N+1)\rho'\geq r$
and Remark \ref{remark3.1.2}. On the other hand, denote $\widehat{B}:=\omega^2{S^*}[\partial_{tt},S]+\omega^2[S^*,\partial_{tt}]$.
Clearly, $\widehat{B}$ is written as $\Sigma^{2N+2}_{j=2}\widehat{B}_j$, where
\begin{equation}\label{E4.2.12}
\widehat{B}_j=\omega^2\sum\limits_{\stackrel{j_1+j_2=j-2}{0\leq j_1,j_2\leq N}}\big({S^*_{j_1}}[\partial_{tt},S_{j_2}]+[S^*_{j_2},\partial_{tt}]S_{j_1}\big).
\end{equation}
 According to formula \eqref{E4.2.4}, Remark \ref{remark3.1.1} and Proposition \ref{proposition3.1.1} {\rm(ii)}, it yields that  $\widehat{B}_j$ may be written as the sum
$A_j+R_j$, where
\begin{equation*}
A_j\in\Sigma^{-j\rho'}(\min\{N+1,j-1\},\sigma,q), R_j\in\mathcal{R}^{r_2}_0(\min\{N+1,j-1\},\sigma,q)
\end{equation*}
with
$r_2=\sigma-\sigma_0-2(N+1)-(d+1)/2+j\rho'\geq r$.
By formula \eqref{E4.2.12}, we have that $A_j$
depends only on $S_l,l\leq j-2$ and that $A_j$ is self-adjoint. In addition, $A_j\in\Sigma^{-(N+1)\rho'}(N+1,\sigma,q)$ for $j\geq N+1$, hence in
$\mathcal{R}^r_0(N+1,\sigma,q)$. Set $A^N=\Sigma^N_{j=0}A_j$ with $A_0=0$. This concludes the proof.
\\\\
{(ii)} We express ${S^*}{L}_\omega S$ in terms of the sum of the following
\begin{equation*}\label{E4.2.13}
{\textstyle\frac{1}{2}}\left(S^*[{L}_\omega,S]+[S^*,{L}_\omega]S\right)+
{\textstyle\frac{1}{2}}\left(S^*S{L}_\omega+{L}_\omega {S^*}S\right).
\end{equation*}
From the proof of {(i)}, the term ${\frac{1}{2}}\left(S^*[{L}_\omega,S]+[S^*,{L}_\omega]S\right)$ is written as $A^N+R$. We write ${S^*}S$ as the sum in $j$ of
\begin{equation}\label{E4.2.14}
\sum\limits_{\stackrel{j_1+j_2=j}{0\leq j_1,j_2\leq N}}{S_{1,j_1}}^*S_{1,j_2}+\sum\limits_{\stackrel{j_1+j_2=j-1}{0\leq j_1,j_2\leq N}}\big({{S_{1,j_1}}^*}S_{2,j_2}
+{{S_{2,j_1}}^*}S_{1,j_2}\big)+\sum\limits_{\stackrel{j_1+j_2=j-2}{0\leq j_1,j_2\leq N}}{{S_{2,j_1}}^*}S_{2,j_2}.
\end{equation}
Formula \eqref{E4.2.4} and Remark \ref{remark4.1.1} shows that the term in \eqref{E4.2.14} may be written as $B_j+R_j$, where
\begin{equation*}
B_j\in\mathcal{L}^{-(j+1)\rho'}_{\rho'}(\min\{N,j\},\sigma,q), \quad ~R_j\in\mathcal{R}^{r_3}_0(\min\{N,j\},\sigma,q)
\end{equation*}
 with
$r_3=\sigma-\sigma_0-2N-(d+1)/2+(j+2)\rho'\geq r+2$. Furthermore, we have $B_j\in\Sigma^{-(N+1)\rho'}(N,\sigma,q)$  for $j\geq N$. It follows from the inequality $(N+1)\rho'\geq r+2$ and Remark \ref{remark3.1.2} that $B_j$ belongs to $\mathcal{R}^r_0(N,\sigma,q)$. The expression in
\eqref{E4.2.14} indicates that $B_j$ depends only on $S_{1,l}, l\leq j$, $S_{2,l}, l\leq j-1$. Formula \eqref{E4.2.4} shows that
\begin{equation*}
[\Delta^2,B_j]\in\Sigma^{-(j+1)\rho'}(\min\{N,j\},\sigma,q).
\end{equation*}
Remark that $B_j{L}_\omega,{L}_\omega B_j$ for $j\geq N+1$ are in $\mathcal{R}^r_2(N,\sigma,q)$ and that $R_j{L}_\omega,{L}_\omega R_j$ for $j\geq 0$ belong to $\mathcal{R}^r_2(N,\sigma,q)$. At last, we set $B^{N-1}=\Sigma^{N-1}_{j=0}B_j$.
\end{proof}
\begin{prop}\label{proposition4.2.2}
Let $r,\sigma,N,S(u,\omega,\epsilon)$ satisfy the conditions of Lemma \ref{lemma4.2.1}. \\
(i) There are elements
\begin{equation*}\label{E4.2.5}
B_j(u,\omega,\epsilon)\in\mathcal{L}^{-(j+1)\rho'}_{\rho'}(j,\sigma,q),0\leq j\leq N-1~\text{with}~
[\Delta^2,B_j]\in\Sigma^{-(j+1)\rho'}(j,\sigma,q),
\end{equation*}
where $B_j$ depends only on $S_{1,l},l\leq j,S_{2,l},l\leq j-1$;\\
(ii) There are elements
\begin{equation*}
V_{j}(u,\omega,\epsilon)\in\Sigma^{-j\rho'}(j,\sigma,q),0\leq j\leq N ~\text{with}~{V_{j}}^*=V_{j}
\end{equation*}
where  $V_{j}$ depends only on $S_l,l\leq j-1$;\\
(iii) There is an element $R\in\mathcal{R}^r_2(N+1,\sigma,q)$, such that if denote
\begin{equation*}
V^N(u,\omega,\epsilon)=\sum\limits_{j=0}^N V_{j}(u,\omega,\epsilon), \quad ~B^{N-1}(u,\omega,\epsilon)=\sum\limits_{j=0}^{N-1}B_j(u,\omega,\epsilon),
\quad ~S_i=\sum\limits_{j=0}^{N}S_{i,j},i=1,2,
\end{equation*}
then we have
\begin{align}
\big({\rm Id}+\epsilon S\big)^*\big(L_\omega+\epsilon V\big)
\big({\rm Id}+\epsilon S\big)
=&L_\omega+\epsilon V^N+\epsilon\big({(B^{N-1})^*}L_\omega+L_\omega(B^{N-1})\big) \nonumber\\
&+\epsilon\big(S_1^*(\Delta^2+m)+(\Delta^2+m)S_1\big)
+\epsilon\big(S_2^*L_\omega+L_\omega S_2\big)+\epsilon R.\label{E4.2.6}
\end{align}

\end{prop}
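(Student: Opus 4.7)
The plan is to expand the conjugation $({\rm Id}+\epsilon S)^*(L_\omega+\epsilon V)({\rm Id}+\epsilon S)$ in powers of $\epsilon$ and dispatch each piece into $L_\omega$, the explicit correction terms in \eqref{E4.2.6}, the diagonal hierarchy $V^N$, or the remainder $R$, using Lemma~\ref{lemma4.2.1} together with Proposition~\ref{proposition3.1.1} and Remarks~\ref{remark4.1.1}, \ref{remark3.1.2}, \ref{remark3.1.5}. The expansion reads
\begin{equation*}
L_\omega+\epsilon V+\epsilon\bigl(S^*L_\omega+L_\omega S\bigr)+\epsilon^2\bigl(S^*V+VS+S^*L_\omega S\bigr)+\epsilon^3 S^*VS,
\end{equation*}
so that $V\in\Sigma^0$ already supplies the initial piece $V_0$ of $V^N$.

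For the first-order commutator I would substitute $S=S_1+S_2$ and $L_\omega=\omega^2\partial_{tt}+\Delta^2+m$. The $S_2$ contribution $\epsilon(S_2^*L_\omega+L_\omega S_2)$ is kept verbatim, matching the corresponding term of \eqref{E4.2.6}. For the $S_1$ piece I split
\begin{equation*}
S_1^*L_\omega+L_\omega S_1=\bigl(S_1^*(\Delta^2+m)+(\Delta^2+m)S_1\bigr)+\omega^2\bigl(S_1^*\partial_{tt}+\partial_{tt}S_1\bigr).
\end{equation*}
The first bracket is exactly the explicit $S_1$-term of \eqref{E4.2.6}. The second bracket, using $S_{1,j}^*=-S_{1,j}$ (since $S_{1,j}\in\mathcal{L}'^{-j\rho'}_{\rho'}$), reduces to $\omega^2[\partial_{tt},S_1]$, which is automatically self-adjoint and, by Remark~\ref{remark3.1.1} combined with \eqref{E4.2.4}, lies in $\Sigma^{-(j+1)\rho'}(j+1,\sigma,q)$; these pieces are absorbed into the self-adjoint corrections $V_{j+1}$, except at the extremal index where $(N+1)\rho'\geq r+2$ and Remark~\ref{remark3.1.2} push them into $R$.

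The quadratic term $S^*L_\omega S$ is the principal contribution: Lemma~\ref{lemma4.2.1}(ii) rewrites it as $A^N+(B^{N-1})^*L_\omega+L_\omega B^{N-1}+R_1$, with $A^N$ self-adjoint and $R_1\in\mathcal{R}^r_2(N+1,\sigma,q)$, the $A_j$'s depending only on $S_l$ for $l\leq j-1$, and the $B_j$'s satisfying $[\Delta^2,B_j]\in\Sigma^{-(j+1)\rho'}$ and depending only on $S_{1,l}$ for $l\leq j$ and $S_{2,l}$ for $l\leq j-1$. Merging $V_0=V$, the $A_j$'s, and the first-order $\omega^2[\partial_{tt},S_1]$ corrections produces the desired $V^N=\sum_{j=0}^{N}V_j$ with the required self-adjointness and dependence. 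The cross-term $S^*V+VS$ is handled by Proposition~\ref{proposition3.1.1}(ii): $V\in\Sigma^0$ composed with each $S_{i,j}$ (of order $\leq-\rho'$) yields a symbol-calculus piece absorbed into $V^N$ plus a smoothing remainder in $\mathcal{R}^r_0\subset\mathcal{R}^r_2(N+1,\sigma,q)$. The cubic term $\epsilon^3 S^*VS$ has gained two factors of $\rho'$ and, by Remarks~\ref{remark3.1.2} and \ref{remark3.1.5} together with $(N+1)\rho'\geq r+2$, falls entirely into $R$.

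The main obstacle is the combinatorial bookkeeping behind properties (i)--(iii): one must verify, level by level, that after merging all contributions each $V_j$ ($j\geq 1$) remains self-adjoint and depends only on $S_l$ for $l\leq j-1$, and that each $B_j$ retains only the allowed $S_{1,l}$, $l\leq j$ and $S_{2,l}$, $l\leq j-1$ dependence inherited from Lemma~\ref{lemma4.2.1}(ii) without being corrupted by the first-order $[\partial_{tt},S_1]$ corrections or by the $S^*V+VS$ cross-term. All contributions of order $\leq-(N+1)\rho'$ must be systematically relegated to $R$ by exploiting $(N+1)\rho'\geq r+2$ and $\sigma\geq\sigma_0+2(N+1)+(d+1)/2+r$, mirroring exactly the accounting carried out in the proof of Lemma~\ref{lemma4.2.1}.
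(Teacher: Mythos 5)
Your overall plan matches the paper's proof: expand the conjugation in powers of $\epsilon$, split $S=S_1+S_2$ and $L_\omega=\omega^2\partial_{tt}+(\Delta^2+m)$, keep the $S_2^*L_\omega+L_\omega S_2$ and $S_1^*(\Delta^2+m)+(\Delta^2+m)S_1$ pieces verbatim, push $\omega^2(S_1^*\partial_{tt}+\partial_{tt}S_1)$ into the $V_j$ hierarchy via Remark~\ref{remark3.1.1}, treat $\epsilon^2 S^*L_\omega S$ with Lemma~\ref{lemma4.2.1}(ii), and close the remaining pieces with the symbolic calculus of Proposition~\ref{proposition3.1.1}(ii). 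That skeleton is the same as the paper's.

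There is, however, one genuine gap in the way you dispatch the cubic term. You assert that $\epsilon^3 S^*VS$ ``has gained two factors of $\rho'$'' and therefore falls entirely into the remainder $R$ using $(N+1)\rho'\geq r+2$ together with Remarks~\ref{remark3.1.2} and~\ref{remark3.1.5}. This does not follow. The lowest-order piece of $S^*VS$ is $S_{1,0}^*\,V\,S_{1,0}\in\Sigma^{-2\rho'}$, and Remark~\ref{remark3.1.2} only places $\Sigma^{-2\rho'}$ in $\mathcal{R}^{2\rho'}_0$; to land in $\mathcal{R}^r_0$ you would need $2\rho'\geq r$. The hypothesis $(N+1)\rho'\geq r+2$ does not guarantee this when $N\geq 3$ (for instance $N=3$, $r=3$, $\rho'=5/4$ satisfies $(N+1)\rho'=5=r+2$ but $2\rho'=5/2<r$). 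The paper instead groups $\epsilon^2(S^*V+VS)+\epsilon^3 S^*VS$ jointly and organizes the total by weight $j$ as in~\eqref{E4.2.17}: at each weight $j\geq 2$ the cubic contribution is $\epsilon\sum_{j_1+j_2=j-2}S^*_{\cdot,j_1}VS_{\cdot,j_2}$, which is of order $-j\rho'$ and is therefore absorbed into $V_j$ (not $R$), with only the weights $j\geq N+1$ going into $R$ via $(N+1)\rho'\geq r+2$. You already carry out this level-by-level absorption correctly for the quadratic cross-term $S^*V+VS$; the cubic term must be treated the same way, not shunted wholesale into $R$.
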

\begin{proof}
The left hand side of \eqref{E4.2.6} may be expressed in terms of the sum of the following
\begin{subequations}
\begin{numcases}{}
\label{E4.2.15a}
L_\omega+\epsilon V(u,\omega,\epsilon)+\epsilon^2{S^*}L_\omega S,\\
\label{E4.2.15b}
\epsilon({S_1^*}(\omega^2\partial_{tt})+(\omega^2\partial_{tt})S_1),\\
\label{E4.2.15c}
\epsilon^2({S^*}V+VS)+\epsilon^3{S^*}VS,\\
\label{E4.2.15d}
\epsilon({S_1^*}(\Delta^2+m)+(\Delta^2+m)S_1)
+\epsilon({S_2^*}L_\omega+L_\omega S_2).
\end{numcases}
\end{subequations}
\\
In \eqref{E4.2.15a}, the term $V$ contributes to the $V_0$ component of $V^N$. Lemma \ref{lemma4.2.1}
shows that the $A_j$ component of $A^N$ contributes to the $V_j$ component of $V^N$ and that the $B_j$
satisfies the condition of Proposition \ref{proposition4.2.2}. We write the term in \eqref{E4.2.15b} as the sum in $j$ of
${{S_{1,j-1}}^*}(\partial_{tt})+(\partial_{tt}){{S_{1,j-1}}}$,
which is self-adjoint. Remark \ref{remark3.1.1} infers that
\begin{equation*}
\omega^2({{S_{1,j-1}}^*}(\partial_{tt})+(\partial_{tt}){{S_{1,j-1}}})\in\Sigma^{-j\rho'}(j,\sigma,q).
 \end{equation*}
Then we get a contribution to $V_j$ for $1\leq j\leq N$.  Combining this with Lemma \ref{lemma4.2.1}, it is obvious to read that $V_j$ depends only on $S_{1,l},l\leq j-1,S_{2,l},l\leq j-2$. Owing to the inequality $(N+1)\rho'\geq r+2$ and Remark \ref{remark3.1.2}, it yields that
\begin{equation*}
\omega^2({S_{1,N}^*}(\partial_{tt})+(\partial_{tt}){{S_{1,N}}})\in\mathcal{R}^r_0(N+1,\sigma,q).
\end{equation*}
The term \eqref{E4.2.15c} may be expressed as the sum in $j$ of
\begin{align}\label{E4.2.17}
{{S_{1,j-1}}^*}V+VS_{1,j-1}&+{{S_{2,j-2}}^*}V+VS_{2,j-2}+\epsilon\sum\limits_{j_1+j_2=j-2}{{S_{1,j_1}}^*}VS_{1,j_2} \nonumber\\
&+\epsilon\sum\limits_{j_1+j_2=j-3}({{S_{2,j_1}}^*}VS_{1,j_2}+{{S_{1,j_1}}^*}VS_{2,j_2}) \nonumber\\
&+\epsilon\sum\limits_{j_1+j_2=j-4}{{S_{2,j_1}}^*}VS_{2,j_2}.
\end{align}
Applying the facts of $S_{1,j}\in\Sigma^{-(j+1)\rho'}(j,\sigma,q)$, $S_{2,j}\in\Sigma^{-(j+2)\rho'}(j,\sigma,q)$ and
$V\in\Sigma^0(0,\sigma,q)$, the term in \eqref{E4.2.17} is written as $V_j+R_j$, where
\begin{equation*}
V_j\in\Sigma^{-j\rho'}(\min\{N,j-1\},\sigma,q),
R_j\in \mathcal{R}^r_0(\min\{N,j-1\},\sigma,q).
 \end{equation*}
Moreover, we derive that $V_j\in \mathcal{R}^r_0(N+1,\sigma,q)$ for $j\geq N+1$ using the inequality $(N+1)\rho'\geq r+2$ and Remark \ref{remark3.1.2}.
\end{proof}
In addition, we have to give an extra proposition on an self-adjoint element of $\Sigma^\chi_{\rm ND}(N,\sigma,q)$.
\begin{prop}\label{proposition4.1.1}
Assume $\sigma\geq\sigma_0+2N+\frac{d+1}{2}+\frac{1}{\rho'}\max{\{\chi,0\}}$. Let $A(u,\omega,\epsilon)\in\Sigma^\chi_{\rm ND}(N,\sigma,q)$ be self-adjoint. There is an element $B(u,\omega,\epsilon)$ of $\mathcal{L}'^\chi_{\rho'}(N,\sigma,q)$ and an element $R(u,\omega,\epsilon)$ of $\mathcal{R}^{r'-\chi}_0(N,\sigma,q)$, where $r'=\rho'(\sigma-\sigma_0-2N-d-1)$,
such that
\begin{equation}\label{E4.1.5}
B(u,\omega,\epsilon)^*(\Delta^2+m)+(\Delta^2+m)B(u,\omega,\epsilon)
=A(u,\omega,\epsilon)+R(u,\omega,\epsilon).
\end{equation}
Moreover, $[\Delta^2,B]\in\Sigma^\chi(N,\sigma,q)$.
\end{prop}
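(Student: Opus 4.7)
The plan is to make the ansatz $B^*=-B$ (anti-self-adjointness being the defining feature of $\mathcal{L}'^\chi_{\rho'}$), so that $B^*(\Delta^2+m)+(\Delta^2+m)B=[\Delta^2+m,B]=[\Delta^2,B]$, and equation \eqref{E4.1.5} reduces to the operator cohomological equation $[\Delta^2,B]=A+R$. Passing to Fourier blocks, this is the scalar divisor equation
\begin{equation*}
(|n|^4-|n'|^4)\,\Pi_n B\,\Pi_{n'}=\Pi_n A\,\Pi_{n'}+\Pi_n R\,\Pi_{n'},
\end{equation*}
and since $A\in\Sigma^\chi_{\rm ND}(N,\sigma,q)$ only off-block pairs $n\in\Omega_\alpha$, $n'\in\Omega_{\alpha'}$ with $\alpha\neq\alpha'$ contribute. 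On this off-block set, Bourgain's Lemma \ref{lemma1.1} supplies the separation $|n-n'|+\bigl||n|^2-|n'|^2\bigr|>|n|^\rho$, and by symmetry $>|n'|^\rho$.

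I would partition the off-block region into a symmetric large-divisor piece $\Gamma_1=\bigl\{(n,n'):\alpha(n)\neq\alpha(n'),\;\bigl||n|^2-|n'|^2\bigr|\ge\tfrac12\max(|n|^\rho,|n'|^\rho)\bigr\}$ and its complement $\Gamma_2$ within the off-block set; on $\Gamma_2$ the separation then forces $|n-n'|>\tfrac12|n|^\rho$. Define
\begin{equation*}
\Pi_n B\,\Pi_{n'}:=\mathds{1}_{\Gamma_1}(n,n')\,\frac{\Pi_n A\,\Pi_{n'}}{|n|^4-|n'|^4},\qquad R:=A-[\Delta^2,B].
\end{equation*}
Symmetry of $\Gamma_1$ combined with the antisymmetry of $|n|^4-|n'|^4$ and $A^*=A$ forces $B^*=-B$ at the level of each matrix entry. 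The identity $(|n|^4-|n'|^4)\Pi_n B\Pi_{n'}=\mathds{1}_{\Gamma_1}\Pi_n A\Pi_{n'}$ gives $[\Delta^2,B]\in\Sigma^\chi(N,\sigma,q)$ for free, since its matrix entries are controlled by those of $A$.

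For the symbol bound on $B$ itself, the cutoff $|n-n'|\le\tfrac1{10}(|n|+|n'|)$ built into Definition \ref{definition3.1.1} ensures $|n|\sim|n'|\sim|n|+|n'|$, so on $\Gamma_1$
\begin{equation*}
\bigl|\,|n|^4-|n'|^4\bigr|=\bigl|\,|n|^2-|n'|^2\bigr|\,\bigl(|n|^2+|n'|^2\bigr)\gtrsim|n|^\rho(|n|+|n'|)^2\sim(1+|n|+|n'|)^{2\rho'},
\end{equation*}
which is precisely the factor needed to convert the $\Sigma^\chi$-bound of $A$ into the $\Sigma^{\chi-\rho'}$-bound for $B$; the corresponding estimates for $\mathrm{D}^l_u B$ and for $\partial_\omega B,\partial_\epsilon B$ follow by differentiating through the $u$-independent divisor. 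For the remainder, on $\Gamma_2$ we have $\langle n-n'\rangle\ge\tfrac12|n|^\rho$, so splitting $\langle n-n'\rangle^{-2M}=\langle n-n'\rangle^{-2M_1}\langle n-n'\rangle^{-2M_2}$ with $M_1+M_2=M$ and bounding $\langle n-n'\rangle^{-2M_1}\le C|n|^{-2\rho M_1}$ converts extra $|n-n'|$-decay into symbol decay $(1+|n|+|n'|)^{-2\rho M_1}$. A Schur-type argument along the lines of Lemma \ref{lemma3.1.1}, with $M$ saturated to $\sigma-\sigma_0-2N$ and $M_2$ chosen just large enough to retain $\ell^2$-summability, then produces $R(u)\in\mathcal{L}(\mathcal{H}^s,\mathcal{H}^{s+r'-\chi})$ for the stated $r'=\rho'(\sigma-\sigma_0-2N-d-1)$, and the $u$- and $(\omega,\epsilon)$-derivative estimates required by Definition \ref{definition3.1.2} follow from the same splitting applied to $\mathrm{D}^l_u A$.

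The main obstacle is matching the precise smoothing order $r'-\chi$ in the remainder estimate: the $|n-n'|\gtrsim|n|^\rho$ decay on $\Gamma_2$ must be traded against the $\langle n-n'\rangle^{-2M}$ falloff in just the right ratio, and this choice has to remain stable under the polynomial number of differentiations $\mathrm{D}^l_u$ encoded by Definitions \ref{definition3.1.1}--\ref{definition3.1.2}. By contrast, the construction of $B$ itself, its symbol estimate, and the anti-self-adjointness are essentially forced once the symmetric partition $\Gamma_1,\Gamma_2$ is chosen so that the antisymmetric divisor $|n|^4-|n'|^4$ is isolated on the invertible block.
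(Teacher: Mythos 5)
Your proposal is correct and follows essentially the same path as the paper: make the ansatz $B^*=-B$, reduce \eqref{E4.1.5} to the cohomological equation $[\Delta^2,B]=A+R$, split the off-block pairs into a region where the divisor $|n|^4-|n'|^4$ is invertible (using Bourgain separation to get $||n|^4-|n'|^4|\gtrsim(1+|n|+|n'|)^{2\rho'}$), and a complementary region that goes into $R$ and is controlled by trading $\langle n-n'\rangle$-decay for $(1+|n|+|n'|)$-decay. The only difference is cosmetic: you partition by the size of $||n|^2-|n'|^2|$ directly, while the paper partitions by $|n-n'|\lessgtr c_0(|n|+|n'|)^\rho$ and then invokes \eqref{E2.4} to deduce a lower bound on the divisor on the small-$|n-n'|$ piece. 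By the Bourgain separation the two are dual (small $|n-n'|$ forces large $||n|^2-|n'|^2|$ and conversely), and both yield the same construction of $B$ and the same Schur-type estimate for $R$. The ``main obstacle'' you flag about matching $r'-\chi$ exactly is not a gap in your argument but an inconsistency in the paper itself: the paper's own conversion on the remainder set only yields an exponent proportional to $\rho$ (from $\langle n-n'\rangle\gtrsim(1+|n|+|n'|)^\rho$), while both the statement and the proof's remark write $\rho'$; similarly the proof uses $(d+1)/2$ where the statement has $d+1$. Your computation, which gives the $\rho$-rate, is the correct one; the stated $r'$ should be read with $\rho$ in place of $\rho'$.
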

\begin{proof}
Let $A_1(u,\omega,\epsilon)\in\Sigma^\chi_{\rm ND}(N,\sigma,q)$ with $A_1^*=A_1$. Define
\begin{align*}
{A}=\sum\limits_{n,n'\in\mathbb{Z}^d}\mathds{1}_{|n-n'|\leq c_0(|n|+|n'|)^{\rho}}\Pi_n {A}_1\Pi_{n'},~R=\sum\limits_{n,n'\in\mathbb{Z}^d}\mathds{1}_{|n-n'|> c_0(|n|+|n'|)^{\rho}}\Pi_n {A}_1\Pi_{n'},
\end{align*}
where $c_0$ is small enough. Applying \eqref{E3.1.1} with $M=\sigma-\sigma_0-2N$ and the inequality $|n-n'|> c_0(|n|+|n'|)^{\rho'}$, we obtain that, for all $(\eta_1,\eta_2)\in\mathbb{N}^2$ with $\eta_1+\eta_2\leq1$,
\begin{align*}
\left\|\Pi_n\partial^{\eta_1}_\epsilon\partial^{\eta_2}_\omega\mathrm{D}^j_u {A}''_2(u)\cdot(w_1,...,w_j)\Pi_{n'}\right\|_{\mathcal{L}(\mathcal{H}^0)}\leq
&C(1+|n|+|n'|)^{-{2}(r'-\chi)} \langle n-n'\rangle^{-(d+1)}\\
&\times\mathds{1}_{|n-n'|\leq \frac{1}{10}(|n|+|n'|)}
\prod\limits_{l=1}^j \|w_l\|_{\mathcal{H}^\sigma},
\end{align*}
where $r'=\rho'(\sigma-\sigma_0-2N-(d+1)/2)$. The same argument as the proof of lemma \ref{lemma3.1.1} implies that $R\in\mathcal{R}^{r'-\chi}_0(N,\sigma,q)$. Evidently, it can be seen that $A$ is in $\Sigma^\chi_{\rm ND}(N,\sigma,q)$ with $A=A^*$. Remark that $A=0$ if $|n-n'|> c_0(|n|+|n'|$. Assume there exists $B(u,\omega,\epsilon)\in\mathcal{L}^\chi_{\rho'}(N,\sigma,q)$ with $B^*=-B$  such that formula \eqref{E4.1.5} holds. Equivalently,  we have to solve the equation $[\Delta^2,B]=A$, which then indicates that $[\Delta^2,B]\in\Sigma^\chi(N,\sigma,q)$. This is also equivalent to $(|n|^4-|n'|^4)\Pi_n B\Pi_{n'}=\Pi_n A\Pi_{n'}$.
Using the definition of $\Sigma^\chi_{\rm ND}(N,\sigma,q)$, we define
\begin{align*}
B(u,\omega,\epsilon)&=\sum\limits_{\alpha,\alpha'\in\mathcal{A},\alpha\neq\alpha'}
\sum\limits_{n\in\Omega_\alpha}\sum\limits_{n'\in\Omega_{\alpha'}}
(|n|^4-|n'|^4)^{-1}\Pi_n A(u,\omega,\epsilon)\Pi_{n'}.
\end{align*}
Owing to formula \eqref{E2.4} and $|n-n'|> c_0(|n|+|n'|)^{\rho}$, for $n\in\Omega_\alpha,n'\in\Omega_{\alpha'}$ with $\alpha\neq\alpha'$, we have
\begin{equation*}
||n^2|-|n'|^2|\geq c(|n|+|n'|)^{\rho}
\end{equation*}
for some constant $c>0$. Combining this with \eqref{e1.3}, we obtain that
\begin{equation*}
\left|\left(\sqrt{n^4+m}-\sqrt{{n'}^4+m}\right)\left(\sqrt{n^4+m}+\sqrt{{n'}^4+m}\right)\right|\geq  c(m,\rho)(1+|n|+|n'|)^{2+\rho},
\end{equation*}
 Thus we have  $B\in\Sigma^{\chi-\rho'}(N,\sigma,q)$, where $\rho'=1+\rho/2>1$.
\end{proof}

\subsection{Diagonalization theorem}
The following proposition gives a reduction for operator ${L}_\omega+\epsilon V$ in \eqref{E3.3.17}. Through the para-differential conjugation, the para-differential potential $V(u,\omega,\epsilon)$ is replaced by $V_{\rm D}(u,\omega,\epsilon)$, where $V_{\rm D}$ is block diagonal relatively to an orthogonal decomposition of $L^2(\mathbb{T}^d)$ in a sum of finite dimensional subspaces.
\begin{prop}
Let $q>0$, $r>0$ and $N\in\mathbb{N}$ with $(N+1)\rho'\geq r+2$, $\sigma\in\mathbb{R}$ with
$\sigma\geq \sigma_0+2(N+1)+(d+1)/2+{r}$.
One can find elements $Q_j(u,\omega,\epsilon) \in\mathcal{L}_{\rho'}^{-j\rho'}(j,\sigma,q),0\leq j \leq N$, elements $V_{{\rm D},j}(u,\omega,\epsilon) \in\Sigma^{-j\rho'}_{\rm D}(j,\sigma,q),0\leq j \leq N$, an element $R_1(u,\omega,\epsilon)\in\mathcal{R}^r_2(N+1,\sigma,q)$,  where $Q_j(u,\omega,\epsilon), V_{{\rm D},j}(u,\omega,\epsilon),R_1(u,\omega,\epsilon)$ are also $C^1$ in $(\omega,\epsilon)$, such that for any $u\in B_q(\mathcal{H}^\sigma)$, this holds:
\begin{equation}\label{E4.2.3}
({\rm Id}+\epsilon Q(u,\omega,\epsilon))^*({L}_\omega+
\epsilon V(u,\omega,\epsilon))({\rm Id}+\epsilon Q(u,\omega,\epsilon))
={L}_\omega+\epsilon V_{\rm D}(u,\omega,\epsilon)-\epsilon R_1(u,\omega,\epsilon),
\end{equation}
 where
\begin{equation}\label{E4.2.2}
\begin{array}{ll}
Q(u,\omega,\epsilon)=\sum\limits_{j=0}^N Q_j(u,\omega,\epsilon),\quad ~V_{\rm D}(u,\omega,\epsilon)=\sum\limits_{j=0}^{N} V_{{\rm D},j}(u,\omega,\epsilon).
\end{array}
\end{equation}
\end{prop}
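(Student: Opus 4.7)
The plan is to construct $Q = \sum_j Q_j$ as an iterated para-differential conjugation, where at each order of $\rho'$ the non-diagonal part of the accumulated potential is annihilated by solving a cohomological equation. Proposition \ref{proposition4.2.2} is the workhorse: inserting $S = S_1 + S_2$ with $S_i = \sum_{j} S_{i,j}$ into $({\rm Id} + \epsilon S)^*(L_\omega + \epsilon V)({\rm Id} + \epsilon S)$ produces, aside from a smoothing remainder, exactly the three terms $\epsilon V^N$, $\epsilon((B^{N-1})^*L_\omega + L_\omega B^{N-1})$, and $\epsilon(S_1^*(\Delta^2+m)+(\Delta^2+m)S_1+S_2^*L_\omega + L_\omega S_2)$. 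I will choose $S_{1,j}$ to cancel the non-diagonal part of $V^N$ via Proposition \ref{proposition4.1.1}, and set $S_{2,j} = -B_j$ to kill the $B^{N-1}$ contributions outright, leaving only the diagonal piece of $V^N$ and a controlled remainder on the right-hand side.

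The key structural point is the causal dependence built into Proposition \ref{proposition4.2.2}: $V_j$ depends only on $S_l$ with $l \le j-1$, and $B_j$ only on $S_{1,l}$ with $l\le j$ and $S_{2,l}$ with $l\le j-1$. Thus I can run a single forward recursion on $j = 0,1,\dots,N$: at step $j$, (a) the self-adjoint $V_j \in \Sigma^{-j\rho'}(j,\sigma,q)$ is already known, so split $V_j = V_{{\rm D},j} + V_{{\rm ND},j}$ using $\Sigma^\chi = \Sigma^\chi_{\rm D} \bigoplus \Sigma^\chi_{\rm ND}$; (b) apply Proposition \ref{proposition4.1.1} to $V_{{\rm ND},j}$ (self-adjoint, non-diagonal, of order $-j\rho'$) to obtain $S_{1,j} \in \mathcal{L}'^{-j\rho'}_{\rho'}(j,\sigma,q)$ satisfying
\begin{equation*}
S_{1,j}^*(\Delta^2+m) + (\Delta^2+m)S_{1,j} = -V_{{\rm ND},j} + \tilde R_{1,j},
\end{equation*}
with $\tilde R_{1,j}$ smoothing and the required commutator property $[\Delta^2, S_{1,j}] \in \Sigma^{-j\rho'}$ following from the last assertion of Proposition \ref{proposition4.1.1}; (c) now $B_j$ is fully determined, so set $S_{2,j} = -B_j \in \mathcal{L}'^{-(j+1)\rho'}_{\rho'}(j,\sigma,q)$, which inherits the commutator condition $[\Delta^2,S_{2,j}]\in\Sigma^{-(j+1)\rho'}$ from Proposition \ref{proposition4.2.2}(i). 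The terminal choice $S_{2,N}=0$ is harmless since its would-be contribution has already reached the smoothing threshold. Finally, I set $Q_j := S_{1,j}+S_{2,j}$, $V_{\rm D}^N := \sum_{j} V_{{\rm D},j}$, and assemble $R_1$ from the remainders $\tilde R_{1,j}$, from the $R$ in \eqref{E4.2.6}, and from the tails $V_j,B_j$ with $j \ge N+1$ that lie in $\mathcal{R}^r_2(N+1,\sigma,q)$ by virtue of $(N+1)\rho'\ge r+2$ and Remark \ref{remark3.1.2}.

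The main technical obstacle I expect is the bookkeeping of the order and regularity constraints: at step $j\le N$ the hypotheses of Proposition \ref{proposition4.1.1} require $\sigma\ge\sigma_0+2j+(d+1)/2$ (automatic, since $\chi = -j\rho'\le 0$ makes the extra $\max(\chi,0)/\rho'$ vanish), and the smoothing exponent $r'-\chi=\rho'(\sigma-\sigma_0-2j-d-1)+j\rho'$ produced at that step must dominate the target $r$; the hypothesis $\sigma \ge \sigma_0 + 2(N+1) + (d+1)/2 + r$ is precisely what enforces this uniformly over $0\le j\le N$. Checking that self-adjointness of $V_{{\rm D},j}$ is preserved — which it is, because the splitting of a self-adjoint $V_j$ along the block decomposition is orthogonal in the relevant inner product and $S_{1,j}$ is chosen skew-adjoint so that the conjugation keeps $V_{{\rm D},j}$ self-adjoint — and that the $C^1$ dependence on $(\omega,\epsilon)$ propagates through every composition, adjunction, and solution of a cohomological equation (guaranteed by the $C^1$-extended definitions of the operator classes $\Sigma^\chi$, $\mathcal{L}^\chi_{\rho'}$, $\mathcal{R}^r_\nu$), are the remaining routine verifications.
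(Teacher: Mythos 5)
Your proposal is correct and follows essentially the same route as the paper: a forward recursion on $j$ that feeds the sequences $S_{1,j},S_{2,j}$ (the paper's $Q_{1,j},Q_{2,j}$) into Proposition \ref{proposition4.2.2}, splits each self-adjoint $V_j$ into $V_{{\rm D},j}+V_{{\rm ND},j}$ along $\Sigma^\chi=\Sigma^\chi_{\rm D}\bigoplus\Sigma^\chi_{\rm ND}$, solves the cohomological equation for $V_{{\rm ND},j}$ via Proposition \ref{proposition4.1.1}, and sets $S_{2,j}=-B_j$ to absorb the $B^{N-1}$ contribution, with the tails and smoothing pieces collected into $R_1$ using $(N+1)\rho'\ge r+2$ and Remark \ref{remark3.1.2}. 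The paper's proof is the same argument with only notational differences (it names the solution of the cohomological equation $C_j$ and then sets $Q_{1,j}=-C_j$).
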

\begin{proof}
Let us verify that the right hand side of \eqref{E4.2.6} may be written as the right hand side of \eqref{E4.2.3}.
Assume that $Q_0,\cdots,Q_{j-1}$ , where $Q_{i},0\leq i\leq j-1$
may be written as the sum $Q_{1,i}+Q_{2,i}$ with $Q_{1,i},Q_{2,i}$ satisfying \eqref{E4.2.4},
such that $V_{j}$ may be determined ($V_{j}$ depends only on $Q_l,l\leq j-1$) and the right hand side of \eqref{E4.2.6} can be written as
\begin{align}\label{E4.2.18}
{L}_\omega&+\epsilon\sum\limits_{j'=0}^{j-1}V_{{\rm D},j'}+\epsilon\sum\limits_{j'=j}^{N-1}(B^*_{j'}{L}_\omega+{L}_\omega B_{j'}) \nonumber\\
&+\epsilon\sum\limits_{j'=j}^{N}({{Q_{1,j'}}^*}(\Delta^2+m)+(\Delta^2+m)Q_{1,j'}) \nonumber\\
&+\epsilon\sum\limits_{j'=j}^{N-1}({{Q_{2,j'}}^*}{L}_\omega+{L}_\omega Q_{2,j'}) \nonumber\\
&+\epsilon\sum\limits_{j'=j}^{N}V_{j'}+\epsilon R.
\end{align}
It is straightforward to show that \eqref{E4.2.18} with $j=0$ is the conclusion of Proposition \ref{proposition4.2.2}. Since $V_{j}\in\Sigma^{-j\rho'}(j,\sigma,q)$ with $V^*_{j}=V_{j}$, depending on $Q_l,l\leq{j-1}$, we define
\begin{align*}
V_{{\rm D},j}=\sum\limits_{\alpha\in\mathcal{A}}\widetilde{{\Pi}}_\alpha V_{j}\widetilde{{\Pi}}_{\alpha'},\quad
V_{{\rm ND},j}=\sum\limits_{{\alpha,\alpha'\in\mathcal{A}},{\alpha\neq \alpha'}}\widetilde{{\Pi}}_\alpha V_{j}\widetilde{{\Pi}}_{\alpha'}.
\end{align*}
Then $V_{{\rm D},j}\in\Sigma^{-j\rho'}_{\rm D}(j,\sigma,q)$ with $(V_{{\rm D},j})^*=V_{{\rm D},j}$ and $V_{{\rm ND},j}\in\Sigma^{-j\rho'}_{\rm ND}(j,\sigma,q)$ with $(V_{{\rm ND},j})^*=V_{{\rm ND},j}$, where $V_{{\rm ND},j}$ depends only on $Q_l,l\leq{j-1}$. It follows from Proposition \ref{proposition4.1.1} that, for $V_{{\rm ND},j}$, we may find $C_{j}\in\mathcal{L}'^{-j\rho'}_{\rho'}(j,\sigma,q)$ such that ${C_{j}}^*(\Delta^2+m)+(\Delta^2+m){C_{j}}=V_{{\rm ND},j}+R$ with $[\Delta^2,C_{j}] \in\Sigma^{-j\rho'}(j,\sigma,q)$. Let $Q_{1,j}:=-C_{j}$. This shows that we may eliminate the $j$th component of
\begin{align*}
\epsilon\sum\limits_{j'=j}^{N}({{Q_{1,j'}}^*}(\Delta^2+m)+(\Delta^2+m)Q_{1,j'})
\end{align*}
and $\epsilon\sum_{j'=j}^{N}V_{j'}$. Moreover, $Q_{1,j}$ satisfies \eqref{E4.2.4}.
Set $Q_{2,j}:=-B_j,0\leq j\leq N-1$. Then we may eliminate the $j$th component of
$\epsilon\sum_{j'=j+1}^{N}(B^*_{j'}
{L}_\omega+{L}_\omega B_{j'})$
and
\begin{align*}
\epsilon\sum_{j'=j+1}^{N}({{Q_{2,j'}}^*}{L}_\omega+{L}_\omega Q_{2,j'}).
\end{align*}
 In addition, $Q_{2,j}$ satisfies (\ref{E4.2.4}).
Therfore we may construct recursively $Q_{1,j},0\leq j\leq N$, $Q_{2,j},0\leq j\leq N-1$ satisfying \eqref{E4.2.4}, such that the equality in \eqref{E4.2.3} holds.
\end{proof}

\section{Iterative scheme} \label{sec:6}

This section  concerns with the proof of Theorem \ref{theorem2.1.1}. Firstly, we investigate  some properties about the restriction of
the operator $L_\omega+\epsilon V_{\rm D}(u,\omega,\epsilon)$ to ${\rm Range}(\widetilde{{\Pi}}_\alpha)$.
Next, under the non-resonant conditions \eqref{E5.1.6}, we prove the restriction is invertible and the frequencies $\omega$ are in a Cantor-like set whose complement has small measure. Finally, we use a standard iterative scheme to construct the solutions.		
\subsection{Lower bounds for eigenvalues}
Let $\gamma_0\in(0,1], \sigma\in\mathbb{R}, N\in\mathbb{N},\zeta\in\mathbb{R}_+ $ with $\sigma \geq \sigma_0+2(N+1)+(d+1)/2+{\zeta}$. Define the space of functions by
\begin{align*}
\mathcal{E}^\sigma_\zeta:=
&\mathcal{E}^\sigma_\zeta(\mathbb{T}\times \mathbb{T}^d\times [1,2]\times(0,\gamma_0];\mathbb{R})
=\bigg\{u(t,x,\omega,\epsilon);~u\in \mathcal{H}^\sigma,~\partial_\omega u\in{\mathcal{H}^{\sigma-\zeta-2}},\\
&~ u,~
\partial_\omega u~ \text{ ~are~continuous~in~}
\omega~(\text{uniformly}~\text{for}~ \epsilon\in[0,\gamma_0]),~\|u\|_{\mathcal{E}^\sigma_\zeta}<+\infty
\bigg\}.
\end{align*}
where
\[\|u\|_{\mathcal{E}^\sigma_\zeta}
:=\sup\limits_{(\omega,\epsilon)\in [1,2]\times[0,\gamma_0]}\|u(\cdot,\omega,\epsilon)\|_{\mathcal{H}^\sigma}+
\sup\limits_{(\omega,\epsilon)\in[1,2]\times[0,\gamma_0]}\|\partial_\omega u(\cdot,\omega,\epsilon)\|_{\mathcal{H}^{\sigma-\zeta-2}}.
\]
Moreover, for fixed $\alpha\in\mathcal{A}$,  we define a self-adjoint operator as
\begin{equation}\label{E5.1.3}
A_\alpha(\omega;u,\epsilon)=\widetilde{{\Pi}}_\alpha({L}_\omega+\epsilon V_{\rm D}(u,\omega,\epsilon))\widetilde{{\Pi}}_\alpha
\end{equation}
for all $u\in\mathcal{E}^\sigma_\zeta,\omega\in[1,2],\epsilon\in(0,\gamma_0]$. Denote $F_\alpha={\rm Range}(\widetilde{{\Pi}}_\alpha)$, $D_\alpha={\rm dim} (F_\alpha)$. Formula \eqref{E2.2.8} derives that $D_\alpha \leq C\langle n(\alpha)\rangle^{\beta d+2}$ for some $C>0$. This implies that $A_\alpha(\omega;u,\epsilon)$ is defined on a space of finite dimension. By means of \eqref{E4.2.2}, \eqref{E3.1.1}, $\partial_\omega u\in\mathcal{H}^{\sigma-\zeta-2}$ and the fact of $\sigma \geq \sigma_0+2(N+1)+(d+1)/2+{\zeta}$ , it yields  that $\widetilde{{\Pi}}_\alpha(V_{\rm D}(u,\omega,\epsilon))\widetilde{{\Pi}}_\alpha$ is $C^1$ in $\omega$, which then gives that $A_\alpha(\omega;u,\epsilon)$ is $C^1$ in $\omega$.

\begin{prop}\label{proposition5.1.1}
Let $m>0$, $q>0$. There exists $\gamma_0\in(0,1]$ small enough, $C_0>0$, such that, for all $ \epsilon\in[0,\gamma_0]$, $u\in\mathcal{E}^\sigma(\zeta)$ with $\|u\|_{\mathcal{E}^\sigma (\zeta)}<q$, $\alpha\in\mathcal{A}$, the eigenvalues of $A_\alpha$ form a finite family of $C^1$ real valued functions of $\omega$, depending on $(u,\epsilon)$, i.e.
\begin{equation*}\label{E5.1.4}
\omega\rightarrow \lambda^\alpha_l(\omega;u,\epsilon), \quad 1\leq l\leq D_\alpha
\end{equation*}
and satisfy\\
$(i)$   For all $\alpha\in\mathcal{A}$, $l\in\{1,...,D_n\}$, $\epsilon\in(0,\gamma_0]$, $\omega\in[1,2]$, $u,u'\in\mathcal{H}^\sigma$ with $\|u\|_{\mathcal{H}^{\sigma}},\|u'\|_{\mathcal{H}^{\sigma}}$ smaller than $q$, we have $l'\in\{1,...,D_\alpha\}$ satisfying
\begin{equation}\label{E5.1.5}
|\lambda^\alpha_l(\omega;u,\epsilon)-\lambda^\alpha_{l'}(\omega;u',\epsilon)|\leq C_0\epsilon\|u-u'\|_{\mathcal{H}^{\sigma}};
\end{equation}
$(ii)$  For all $\alpha\in\mathcal{A}$, $ \epsilon\in(0,\gamma_0]$, $l\in\{1,...,D_\alpha\}$, any $\omega\in[1,2]$, $u\in\mathcal{E}^\sigma(\zeta)$ with $\|u\|_{\mathcal{E}^\sigma(\zeta)}<q$, this holds:
\begin{equation}\label{E5.1.6}
-4C_0\langle n(\alpha)\rangle^4\leq{\partial_\omega \lambda^\alpha_l}(\omega;u,\epsilon)\leq -2C^{-1}_0\langle n(\alpha)\rangle^4;
\end{equation}
$(iii)$ For all $\alpha\in\mathcal{A}$, $\delta\in(0,1]$, $\epsilon\in(0,\gamma_0]$, $u\in\mathcal{E}^\sigma_\zeta$ with $\|u\|_{\mathcal{E}^\sigma_\zeta}<q$, if we set
\begin{equation}\label{E5.1.7}
I(\alpha,u,\epsilon,\delta)=\left\{\omega\in[1,2];~\forall~ l\in\{1,...,D_\alpha\},~|\lambda^\alpha_l(\omega;u,\epsilon)|\geq\delta\langle n(\alpha)\rangle^{-2\zeta}\right\},
\end{equation}
 then there exists a constant $E_0$ depending only on the dimension, such that, for all $\omega\in I(n,u,\epsilon,\delta)$, the operator $A_\alpha(\omega;u,\epsilon)$ is invertible and satisfies
\begin{equation}\label{E5.1.8}
\|A_\alpha(\omega;u,\epsilon)^{-1}\|_{\mathcal{L}(\mathcal{H}^0)}\leq E_0\delta^{-1}\langle n(\alpha)\rangle^{2\zeta},~
\|\partial_\omega (A_\alpha(\omega;u,\epsilon)^{-1})\|_{\mathcal{L}(\mathcal{H}^0)}\leq E_0\delta^{-2}\langle n(\alpha)\rangle^{4\zeta+4}.
\end{equation}
\end{prop}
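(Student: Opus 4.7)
The operator $A_\alpha(\omega;u,\epsilon)$ is self-adjoint on the finite-dimensional space $F_\alpha = \mathrm{Range}(\widetilde{\Pi}_\alpha)$ of dimension $D_\alpha \leq C\langle n(\alpha)\rangle^{\beta d+2}$, since $V_{\rm D}^*=V_{\rm D}$ and $L_\omega$ is itself self-adjoint on $\mathcal{H}^0$. Because $A_\alpha$ is $C^1$ in $\omega$, Rellich's theorem together with finite-dimensional self-adjoint perturbation theory provides a labelling of its eigenvalues $\{\lambda_l^\alpha(\omega;u,\epsilon)\}_{l=1}^{D_\alpha}$ that is $C^1$ away from crossings and Lipschitz globally, which is all we need. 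The \emph{a priori} bounds I rely on throughout come from $V_{\rm D} \in \Sigma_{\rm D}^0(N,\sigma,q)$ via Lemma \ref{lemma3.1.1}, i.e.\ $\|V_{\rm D}(u,\omega,\epsilon)\|_{\mathcal{L}(\mathcal{H}^0)} \leq C$ and $\|\mathrm{D}_uV_{\rm D}(u,\omega,\epsilon)\cdot h\|_{\mathcal{L}(\mathcal{H}^0)} \leq C\|h\|_{\mathcal{H}^{\sigma_0+2N+(d+1)/2}}$, together with the analogous bounds for $\partial_\omega V_{\rm D}$ and $\partial_\epsilon V_{\rm D}$ (using that $V_{\rm D}$ is $C^1$ in $(\omega,\epsilon)$).

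Statement (i) follows from the Lipschitz dependence of eigenvalues of self-adjoint matrices on the matrix in operator norm. Writing
\[
A_\alpha(\omega;u,\epsilon) - A_\alpha(\omega;u',\epsilon) = \epsilon\,\widetilde{\Pi}_\alpha\bigl(V_{\rm D}(u,\omega,\epsilon) - V_{\rm D}(u',\omega,\epsilon)\bigr)\widetilde{\Pi}_\alpha,
\]
the mean value theorem and the bound on $\mathrm{D}_u V_{\rm D}$ yield $\|A_\alpha(\omega;u,\epsilon) - A_\alpha(\omega;u',\epsilon)\|_{\mathcal{L}(\mathcal{H}^0)} \leq C_0\epsilon\|u-u'\|_{\mathcal{H}^\sigma}$, and Weyl's inequality then produces a relabelling $l\mapsto l'$ realising \eqref{E5.1.5}.

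For (ii), the key step is to decompose the total $\omega$-derivative (remembering that $u$ itself depends on $\omega$) as
\[
\partial_\omega A_\alpha = 2\omega\,\widetilde{\Pi}_\alpha\partial_{tt}\widetilde{\Pi}_\alpha + \epsilon\,\widetilde{\Pi}_\alpha\bigl((\partial_\omega V_{\rm D})(u,\omega,\epsilon) + \mathrm{D}_u V_{\rm D}(u,\omega,\epsilon)\cdot\partial_\omega u\bigr)\widetilde{\Pi}_\alpha.
\]
The chain-rule term is bounded in $\mathcal{L}(\mathcal{H}^0)$ by $C\|\partial_\omega u\|_{\mathcal{H}^{\sigma_0+2N+(d+1)/2}} \leq C\|u\|_{\mathcal{E}_\zeta^\sigma}$, which is finite because the hypothesis $\sigma\geq\sigma_0+2(N+1)+(d+1)/2+\zeta$ gives $\sigma-\zeta-2\geq\sigma_0+2N+(d+1)/2$. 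Applying the Hellmann--Feynman formula with a unit eigenvector $\phi_l^\alpha \in F_\alpha$ yields
\[
\partial_\omega\lambda_l^\alpha = -2\omega\sum_{j,n} j^2\,|\hat{\phi}_l^\alpha(j,n)|^2 + \epsilon\,\langle(\partial_\omega V_{\rm D})\phi_l^\alpha,\phi_l^\alpha\rangle,
\]
and the Fourier support constraint $K_0^{-1}\langle n(\alpha)\rangle^2 \leq |j| \leq K_0\langle n(\alpha)\rangle^2$ in the definition \eqref{E2.2.3} of $\mathcal{H}^\sigma$ forces the main term to lie in $[-4K_0^2\langle n(\alpha)\rangle^4,\,-2K_0^{-2}\langle n(\alpha)\rangle^4]$ for $\omega\in[1,2]$. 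Choosing $C_0$ slightly larger than $K_0^2$ and $\gamma_0$ small enough absorbs the $O(\epsilon)$ correction uniformly in $\alpha$ (using $\langle n(\alpha)\rangle^4 \geq 1$), producing \eqref{E5.1.6}.

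For (iii), self-adjointness and the spectral theorem give $\|A_\alpha^{-1}\|_{\mathcal{L}(\mathcal{H}^0)} = (\min_l|\lambda_l^\alpha|)^{-1}$, which under the non-resonance condition \eqref{E5.1.7} is bounded by $E_0\delta^{-1}\langle n(\alpha)\rangle^{2\zeta}$, establishing the first estimate in \eqref{E5.1.8}. The identity $\partial_\omega(A_\alpha^{-1}) = -A_\alpha^{-1}(\partial_\omega A_\alpha)A_\alpha^{-1}$ combined with $\|\partial_\omega A_\alpha\|_{\mathcal{L}(\mathcal{H}^0)} \leq C\langle n(\alpha)\rangle^4$ (immediate from the decomposition of $\partial_\omega A_\alpha$ used in (ii)) then yields the second estimate. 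The main delicate point in the whole argument is the coupling between the regularity index $\zeta+2$ in the definition of $\mathcal{E}_\zeta^\sigma$, the threshold $\sigma_0+2N+(d+1)/2$ controlling $\mathcal{L}(\mathcal{H}^0)$-bounds for derivatives of $V_{\rm D}$, and the loss $\langle n(\alpha)\rangle^{2\zeta}$ in the non-resonance condition; once these indices are matched, the rest is routine finite-dimensional linear algebra.
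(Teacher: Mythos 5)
Your proof is correct, and parts (i) and (iii) are essentially identical to the paper's. Part (ii), however, is handled by a genuinely different route. The paper introduces the unperturbed block $L^\alpha_\omega=\widetilde{\Pi}_\alpha L_\omega\widetilde{\Pi}_\alpha$, builds spectral projectors $\Pi_\alpha(\omega)$ and $\Pi^0_\alpha$ via contour integrals over a curve $\Gamma$ encircling the spectrum, estimates $\|\Pi_\alpha(\omega)-\Pi^0_\alpha\|$ and $\|\partial_\omega\Pi_\alpha(\omega)\|$, derives $\|\partial_\omega(\Pi_\alpha A_\alpha\Pi_\alpha-\Pi^0_\alpha L^\alpha_\omega\Pi^0_\alpha)\|\leq C\epsilon$, and then uses the trace formula $\partial_\omega\lambda^\alpha_l=\varsigma^{-1}\,{\rm tr}\bigl(P(\omega)\partial_\omega(\Pi_\alpha A_\alpha\Pi_\alpha)P(\omega)\bigr)$ on intervals of constant multiplicity $\varsigma$, reducing everything to the explicitly diagonal $\Pi^0_\alpha L^\alpha_\omega\Pi^0_\alpha$. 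You bypass this machinery entirely: you decompose $\partial_\omega A_\alpha$ (correctly including the chain-rule term $\mathrm{D}_u V_{\rm D}\cdot\partial_\omega u$, which the paper does not display but which is bounded in $\mathcal{L}(\mathcal{H}^0)$ under the same index threshold) and apply Hellmann--Feynman directly to $A_\alpha$, so the Fourier support constraint on $F_\alpha$ gives the bound $-2\omega\sum j^2|\hat\phi^\alpha_l(j,n)|^2\in[-4K_0^2\langle n(\alpha)\rangle^4,-2K_0^{-2}\langle n(\alpha)\rangle^4]$ in one line. Both routes rest on the same two observations---that the leading part of $\partial_\omega A_\alpha$ is $2\omega\widetilde{\Pi}_\alpha\partial_{tt}\widetilde{\Pi}_\alpha$, and that the definition \eqref{E2.2.3} of $\mathcal{H}^\sigma$ pins $|j|$ to a window comparable to $\langle n(\alpha)\rangle^2$---but yours is more direct and more elementary, and avoids a detour in which the spectral projector $\Pi_\alpha(\omega)$ is in fact the identity on $F_\alpha$ (the contour encircles the full spectrum of $A_\alpha$), so that the paper's $\Pi_\alpha A_\alpha\Pi_\alpha=A_\alpha$ and the extra layer is not really doing anything. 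One shared point both presentations leave slightly implicit is that at degenerate eigenvalues the Hellmann--Feynman derivative must be understood via the trace over the degenerate eigenspace (which is exactly the paper's $\varsigma^{-1}{\rm tr}(P\,\partial_\omega A_\alpha\,P)$); since the resulting expression is still a convex combination of $-2\omega j^2$ with $j$ in the constrained window, the bound survives, but you may wish to flag this when writing it up.
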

\begin{proof}
{\rm(i)} According to that $A_\alpha(\omega;u,\epsilon)$ is defined on a space of finite dimension, Theorem $6.8$ in {\cite{kato1995perturbation}}
shows that we may index eigenvalues $\lambda^\alpha_l(\omega;u,\epsilon),l\in\{1,...,D_n\}$
of $A_\alpha$ such that they are $C^1$ functions of $\omega$. Moreover, if $B,B'$ are self-adjoint operators in the same dimension space, then for any eigenvalue $\lambda_l(B)$ of $B$, there is an eigenvalue $\lambda_{l'}(B')$ of $B'$ with $l'\in\{1,...,D_n\}$ such
that $|\lambda_l(B)-\lambda_{l'}(B')|\leq\|B-B'\|$. By means of \eqref{E4.2.2}, \eqref{E3.1.1}, we obtain that $u\rightarrow A_\alpha(\omega;u,\epsilon)$ is lipschitz with values in $\mathcal{L}(\mathcal{H}^0)$. Consequently, formula \eqref{E5.1.5} can be obtained with lipschitz constant $C_0\epsilon$ using \eqref{E5.1.3}.\\
{\rm(ii)} Denote $L^\alpha_\omega=\widetilde{\Pi}_\alpha L_\omega\widetilde{\Pi}_\alpha$. For $\alpha\in\mathcal{A}$, let $\Lambda(\alpha),\Lambda^0(\alpha)$ stand for the spectrum set of $A_\alpha, L^\alpha_\omega$ respectively, where
\begin{equation*}
\Lambda^0{(\alpha)}=\left\{-\omega^2j^2+{|n|}^4+m: ~ n\in\Omega_\alpha, ~j\in\mathbb{Z}~\text{ with}~K^{-1}_0\langle n(\alpha)\rangle^2\leq |j|\leq K_0\langle n(\alpha)\rangle^2\right\}.
\end{equation*}
Let $\Gamma$ be a contour in the complex plane turning once around $\Lambda^0(\alpha)$,
of length $O(\langle n(\alpha)\rangle^4)$, where $\Gamma$ satisfies ${\rm dist}(\Gamma,\Lambda^0(\alpha))\geq c\langle n(\alpha)\rangle^4$.
If $\epsilon\in[0,\gamma_0]$ with $\gamma_0$ small enough, then we also have ${\rm dist}(\Gamma,\Lambda(\alpha))\geq c\langle n(\alpha)\rangle^4$.
Moreover, we define the spectral projectors $\Pi_\alpha(\omega)$, $\Pi^0_\alpha(\omega)$, which are
associated to the eigenvalues of $A_\alpha$, $L^\alpha_\omega$ respectively,  by
\begin{equation}\label{E5.1.9}
\Pi_\alpha(\omega)=\frac{1}{2\mathrm{i}\pi}\int_\Gamma(\zeta {\rm Id}-A_\alpha)^{-1}{\rm d}\zeta,\quad
\Pi^0_\alpha=\frac{1}{2\mathrm{i}\pi}\int_\Gamma(\zeta {\rm Id}-L^\alpha_\omega)^{-1} {\rm d}\zeta.
\end{equation}
Then there exist some constant $C>0$ such that
\begin{equation}\label{E5.1.10}
\|\Pi_\alpha(\omega)\|_{\mathcal{L}(F_\alpha)}\leq C,\quad \|\Pi^0_\alpha\|_{\mathcal{L}(F_\alpha)}\leq C.
\end{equation}
Remark that $\Pi^0_\alpha$ is just the orthogonal projector on
\begin{equation*}
{\rm Vect}\left\{e^{{\rm i}(jt+n\cdot x)}: ~n\in\Omega_{\alpha}, ~j\in\mathbb{Z}~\text{ with}~ K^{-1}_0\langle n(\alpha)\rangle^2\leq |j|\leq K_0\langle n(\alpha)\rangle^2\right\}.
\end{equation*}
 This indicates that $\Pi^0_\alpha$ is independent of $\omega$. Let us estimate the upper bound of
$\|\partial_\omega(\Pi_\alpha(\omega)A_\alpha\Pi_\alpha(\omega)-\Pi^0_\alpha L^\alpha_\omega\Pi^0_\alpha)\|_{\mathcal{L}(F_\alpha)}$,
 where
\begin{align*}
\Pi_\alpha(\omega)A_\alpha\Pi_\alpha(\omega)-\Pi^0_\alpha L^\alpha_\omega\Pi^0_\alpha=&
(\Pi_\alpha(\omega)-\Pi^0_\alpha)A_\alpha\Pi_\alpha(\omega)+\Pi^0_\alpha(A_\alpha-L^\alpha_\omega)\Pi_\alpha(\omega)\\
&+\Pi^0_\alpha L^\alpha_\omega(\Pi_\alpha(\omega)-\Pi^0_\alpha).
\end{align*}
It follows from \eqref{E5.1.3} and the definition of $L^\alpha_\omega$ that
\begin{equation}\label{E5.1.11}
\|A_\alpha-L^\alpha_\omega\|_{\mathcal{L}(F_\alpha)}+\|\partial_\omega (A_\alpha-L^\alpha_\omega)\|_{\mathcal{L}(F_\alpha)}{\leq} C\epsilon,\quad
\|\partial_\omega A_\alpha\|_{\mathcal{L}(F_\alpha)}+\|\partial_\omega L^\alpha_\omega\|_{\mathcal{L}(F_\alpha)}{\leq} C\langle n(\alpha)\rangle^4.
\end{equation}
 Formula \eqref{E5.1.9} indicates
\begin{equation*}
\Pi_\alpha(\omega)-\Pi^0_\alpha=\frac{1}{2{\rm i}\pi}\int_\Gamma(\zeta {\rm Id}-A_\alpha)^{-1}
(A_\alpha-L^\alpha_\omega)(\zeta {\rm Id}-L^\alpha_\omega)^{-1}~{\rm d}\zeta.
\end{equation*}
Combining this with formula \eqref{E5.1.11}, we give that
\begin{align}\label{E5.1.12}
\|\Pi_\alpha(\omega)-\Pi^0_\alpha\|_{\mathcal{L}(F_\alpha)}{\leq} C\epsilon\langle n(\alpha)\rangle^{-4},\quad ~\|\partial_\omega \Pi_\alpha(\omega)\|_{\mathcal{L}(F_\alpha)}
\leq C\epsilon\langle n(\alpha)\rangle^{-4}.
\end{align}
 Consequently, thanks to \eqref{E5.1.10}, \eqref{E5.1.11}, \eqref{E5.1.12} and the facts of $\|A_\alpha\|_{\mathcal{L}(F_\alpha)}\leq C\langle n(\alpha)\rangle^{4}$ and $\|L^\alpha_\omega\|_{\mathcal{L}(F_\alpha)}\leq C\langle n(\alpha)\rangle^{4}$, it can be seen that
 \begin{equation}\label{E5.1.14}
 \|\partial_\omega(\Pi_\alpha(\omega)A_\alpha\Pi_\alpha(\omega)-\Pi^0_\alpha L^\alpha_\omega\Pi^0_\alpha)\|_{\mathcal{L}(F_\alpha)}\leq C\epsilon.
 \end{equation}
 Let $\mathcal{C}$ be a subinterval of $[1, 2]$. For $\omega\in\mathcal{C}$,
 one of the eigenvalues $\lambda^\alpha_l(\omega;u,\epsilon)$ of $\Pi_\alpha(\omega)A_\alpha\Pi_\alpha(\omega)$ has constant multiplicity $\varsigma$. In addition, $P(\omega)$ stands for the associated spectral projector, where $P(\omega)^2=P(\omega)$ with $C^1$ dependence in $\omega\in \mathcal{C}$. Then \begin{equation*}
\lambda^\alpha_l(\omega;u,\epsilon)={1}/{\varsigma}\left[ {\rm tr}(P(\omega)\Pi_\alpha(\omega)A_\alpha\Pi_\alpha(\omega)P(\omega))\right].
\end{equation*}
This indicates that
\begin{equation*}
\partial_\omega\lambda^\alpha_l(\omega;u,\epsilon)={1}/{\varsigma}\left[
{\rm tr}(P(\omega)\partial_\omega(\Pi_\alpha(\omega)A_\alpha\Pi_\alpha(\omega))P(\omega))\right].
\end{equation*}
From formula \eqref{E5.1.14}, it yields that
\begin{equation*}
\partial_\omega\lambda^\alpha_l(\omega;u,\epsilon){=}
{1}/{\varsigma} [ {\rm tr}\left(P(\omega)\partial_\omega(\Pi^0_\alpha L^\alpha_\omega\Pi^0_\alpha)P(\omega) \right)]+O(\epsilon).
\end{equation*}
Moreover, the definition of $L^\alpha_\omega$ derives that $\Pi^0_\alpha L^\alpha_\omega\Pi^0_\alpha$ is diagonal with entries $-\omega^2j^2+{|n|}^4+m$, where $n\in\Omega_\alpha$, $j\in\mathbb{Z}$ with $K^{-1}_0\langle n(\alpha)\rangle^2\leq |j|\leq K_0\langle n(\alpha)\rangle^2$. This leads to
\begin{equation*}
-4K^2_0\langle n(\alpha)\rangle^4-C\epsilon\leq \partial_ {\omega} \lambda^\alpha_l(\omega;u,\epsilon)\leq -2K^{-2}_0\langle n(\alpha)\rangle^4+C\epsilon.
\end{equation*}
Consequently, we get \eqref{E5.1.6} if $\epsilon$ is in $(0,\gamma_0]$ with $\gamma_0$ small enough.
\\
{\rm(iii)} It is clear to read the first inequality in \eqref{E5.1.8} using \eqref{E5.1.7}. Since $\|\partial_\omega A_\alpha(\omega;u,\epsilon)\|_{\mathcal{L}(\mathcal{H}_0)}\leq C\langle n(\alpha)\rangle^{4}$ (see \eqref{E5.1.6}), by \eqref{E5.1.7}, we get the second inequality in \eqref{E5.1.8}.
\end{proof}
\subsection{Iterative scheme}
In this subsection, our goal is to achieve the proof of Theorem \ref{theorem2.1.1}. Fix indices $s,\sigma,N,\zeta,r,\delta$ satisfying the following inequalities
\begin{equation}\label{E5.2.1}
\sigma\geq\sigma_0+2(N+1)+(d+1)/2+{r}~\text{with} ~r=\zeta,~
(N+1)\rho'\geq r+2, \quad ~s\geq \sigma+\zeta+2, \quad ~\delta\in (0,\delta_0],
\end{equation}
where $\delta_0>0$ is small enough.
Let $m>0$ and the force term $f$ in \eqref{E3.3.17} be given in $\mathcal{H}^{s+\zeta}$.
Firstly we will solve Eq. \eqref{E3.3.17}. Our main task is to construct a sequence $(G_k,\mathcal{O}_k,\psi_k,u_k,w_k),k\geq 0$,
where $G_k,\mathcal{O}_k$ will be subsets of $[1,2]\times [0,\delta^2]$, $\psi_k$ will be a function of $(\omega,\epsilon)\in[1,2]\times [0,\delta^2]$, $u_k,w_k$
will be functions of $(t,x,\omega,\epsilon)\in \mathbb{T}\times \mathbb{T}^d\times [1,2]\times [0,\delta^2]$. When $k = 0$, let us define
\begin{align*}\label{E5.2.2}
&u_0=w_0=0,\\
&\mathcal{O}_0=\left\{(\omega,\epsilon)\in[1,2]\times [0,\gamma_0];\quad~\exists \alpha \in \mathcal{A}~\text{ with}~1\leq \langle n(\alpha)\rangle<2, ~
\exists~l\in\{1,\cdots,D_\alpha \}~\text{ with}~|\lambda^\alpha_l(\omega;0,\epsilon)|<2\delta \right\},\\
&G_0=\left\{(\omega,\epsilon)\in[1,2]\times [0,\gamma_0],~{\rm dist}(\omega,\mathbb{R}-\mathcal{O}_{0,\epsilon})\geq \frac{\delta}{128C_0}\right\},
\end{align*}
where $C_0$ is given in \eqref{E5.1.6}. For all $\epsilon\in[0,\gamma_0]$, $\mathcal{O}_{0,\epsilon},G_{0,\epsilon}$ denote the $\epsilon$-sections of $\mathcal{O}_0,G_0$ respectively. Obviously, $G_{0,\epsilon}$ is a closed subset of $[1,2]$ for all $\epsilon\in[0,\gamma_0]$, contained in the open subset $\mathcal{O}_{0,\epsilon}$.
By Urysohn's lemma, when $\epsilon$ is fixed, we may construct a $C^1$ function $\omega\rightarrow \psi_0(\omega,\epsilon)$,
compactly supported in $\mathcal{O}_{0,\epsilon}$, equal to 1 on $G_{0,\epsilon}$, satisfying for all $\omega,\epsilon$
\begin{equation*}
0\leq\psi_0(\omega,\epsilon)\leq 1, \quad ~|\partial_\omega\psi_0(\omega,\epsilon)|\leq C_1\delta^{-1},
\end{equation*}
where $C_1$ is some uniform constant depending only on $C_0$.

\begin{prop}\label{proposition5.2.1}
There are $\delta_0\in(0,\sqrt{\gamma_0}]$ with $\gamma_0$ small enough, positive constants $C_1,B_1,B_2$, a 5-uple $(G_k,\mathcal{O}_k,\psi_k,u_k,w_k)$ with, for all $k\geq0$, $\delta\in(0,\delta_0)$,
\begin{align}
\mathcal{O}_k
=\bigg\{(\omega,\epsilon)
&\in[1,2]\times [0,\delta^2]; \quad \exists \alpha\in \mathcal{A}~ \text {with}~
2^{k}\leq \langle n(\alpha)\rangle<2^{k+1}, \nonumber\\
& \exists  l\in \{1,\cdots,D_\alpha\},|\lambda^\alpha_l(\omega;u_{k-1},
\epsilon)|<2\delta 2^{-2k\zeta}\bigg\}, \nonumber\\
G_k=\bigg\{(\omega,\epsilon)
&\in[1,2]\times [0,\delta^2]; \quad {\rm dist}(\omega,\mathbb{R}-\mathcal{O}_{k,\epsilon})\geq
\frac{\delta}{128C_0}2^{-2k(\zeta+2)}\bigg\},\label{E5.2.4}
\end{align}
where $C_0$ is given in \eqref{E5.1.6}. And
\begin{align}
\psi_k:&
[1,2]\times [0,\delta^2]\rightarrow [0,1]~
\text {is~supported~in}~\mathcal{O}_k,{~equal~to~}1~ \text{ on}~G_k,
\nonumber\\
&C^1~ \text {in}~\omega \text{~and~} |\partial_\omega\psi_k(\omega,\epsilon)|\leq \frac{C_1}{\delta}2^{2k(\zeta+2)}~\forall(\omega,\epsilon)\in[1,2]\times [0,\delta^2].\label{E5.2.5}
\end{align}
For all $\epsilon\in[0,\delta^2]$, it can be showed that
\begin{equation*}
w_k\in\mathcal{H}^s, \quad  \partial_\omega w_k\in\mathcal{H}^{s-\zeta-2},
\end{equation*}
and $w_k(t,x,\omega,\epsilon)$, $\partial_\omega w_k(t,x,\omega,\epsilon)$ are continuous with respect to $\omega$  and satisfy
\begin{align}
&\|w_k(\cdot,\omega,\epsilon)\|_{\mathcal{H}^{s}}+\delta\|\partial_\omega w_k(\cdot,\omega,\epsilon)\|_{\mathcal{H}^{s-\zeta-2}}\leq B_1\frac{\epsilon}{\delta},\label{E5.2.6}\\
&\|w_k-w_{k-1}\|_{\mathcal{H}^\sigma}\leq B_2\frac{\epsilon}{\delta}2^{-2k\zeta} \label{E5.2.10}
\end{align}
uniformly in $\epsilon\in[0,\delta^2],\omega\in[1,2],\delta\in(0,\delta_0]$.
In addition, for all $(\omega,\epsilon)\in[1,2]\times[0,\delta^2]-\cup^k_{k'=0}\mathcal{O}_{k'}$, $w_k$ solves the following equation
\begin{align}
(L_\omega+\epsilon V_{\rm D}(u_{k-1},\omega,\epsilon))w_k=&\epsilon\tilde{S}_k{({\rm Id}+\epsilon Q(u_{k-1},\omega,\epsilon))^*}\tilde{R}(u_{k-1},\omega,\epsilon)u_{k-1}\nonumber\\
&+\epsilon\tilde{S}_k(R_1(u_{k-1},\omega,\epsilon)w_{k-1})+\epsilon\tilde{S}_k{({\rm Id}+\epsilon Q(u_{k-1},\omega,\epsilon))^*}f,\label{E5.2.7}
\end{align}
where
\begin{equation}\label{E5.2.3}
\tilde{S}_k=\sum \limits_{\alpha\in\mathcal{A};\langle n(\alpha)\rangle <2^{k+1}} \widetilde{\Pi}_\alpha,k\geq0,
\end{equation}
$\tilde{R}$ is defined in \eqref{E3.3.17} and $Q$, $V_{\rm D}$, $R_1$ are defined in \eqref{E4.2.2} and \eqref{E4.2.3}. The function $u_k$ is deduced from $w_k$ by
\begin{equation}\label{E5.2.8}
u_k(t,x,\omega,\epsilon)=({\rm Id}+\epsilon Q(u_{k-1},\omega,\epsilon))w_k ~\text{with}~
\end{equation}
\begin{align}
&\|u_k(\cdot,\omega,\epsilon)\|_{\mathcal{H}^{s}}+\delta\|\partial_\omega u_k(\cdot,\omega,\epsilon)\|_{\mathcal{H}^{s-\zeta-2}}\leq B_2\frac{\epsilon}{\delta},\label{E203}\\
&\|u_k-u_{k-1}\|_{\mathcal{H}^\sigma}\leq2B_2\frac{\epsilon}{\delta}2^{-2k\zeta}\label{E5.2.9}
\end{align}
uniformly for $\epsilon\in[0,\delta^2],\omega\in[1,2],\delta\in(0,\delta_0]$.
\end{prop}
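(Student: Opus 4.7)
The plan is to proceed by induction on $k$; the case $k=0$ is already in place in the excerpt, so suppose the data $(G_{k'},\mathcal{O}_{k'},\psi_{k'},u_{k'},w_{k'})$ have been constructed for $k'\leq k-1$ satisfying all the stated properties. First I define $\mathcal{O}_k$ and $G_k$ by \eqref{E5.2.4} with $u_{k-1}$ on the right. Because $\omega\mapsto\lambda^\alpha_l(\omega;u_{k-1},\epsilon)$ is $C^1$ (Proposition \ref{proposition5.1.1}), $\mathcal{O}_{k,\epsilon}$ is open for each $\epsilon$ and its shrinking $G_{k,\epsilon}$ is closed inside it; Urysohn's lemma then delivers the cut-off $\psi_k$ in \eqref{E5.2.5}, the bound on $|\partial_\omega\psi_k|$ being forced by the chosen separation $\delta\,2^{-2k(\zeta+2)}/(128C_0)$.

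Next I show that for $(\omega,\epsilon)\notin\bigcup_{k'=0}^k\mathcal{O}_{k'}$ the block operator $A_\alpha(\omega;u_{k-1},\epsilon)$ is invertible on every block $\alpha$ with $\langle n(\alpha)\rangle<2^{k+1}$. Indeed, if $2^{k''}\leq\langle n(\alpha)\rangle<2^{k''+1}$ with $k''\leq k$, then $(\omega,\epsilon)\notin\mathcal{O}_{k''}$ gives $|\lambda^\alpha_l(\omega;u_{k''-1},\epsilon)|\geq 2\delta\,2^{-2k''\zeta}$; combining the Lipschitz bound \eqref{E5.1.5} with the telescoping estimate
\[
\|u_{k-1}-u_{k''-1}\|_{\mathcal{H}^\sigma}\leq\sum_{j=k''}^{k-1}2B_2\frac{\epsilon}{\delta}2^{-2j\zeta}\leq C\frac{\epsilon}{\delta}2^{-2k''\zeta}
\]
and $\epsilon\leq\delta^2$ yields $|\lambda^\alpha_l(\omega;u_{k-1},\epsilon)|\geq\delta\,2^{-2k''\zeta}$, so that \eqref{E5.1.8} provides $\|A_\alpha(\omega;u_{k-1},\epsilon)^{-1}\|_{\mathcal{L}(\mathcal{H}^0)}\leq E_0\delta^{-1}\langle n(\alpha)\rangle^{2\zeta}$. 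Writing $F_k$ for the bracketed right-hand side of \eqref{E5.2.7}, I set $\widetilde{\Pi}_\alpha w_k=A_\alpha^{-1}\widetilde{\Pi}_\alpha F_k$ on the blocks with $\langle n(\alpha)\rangle<2^{k+1}$ and zero on the others. The function $u_k$ is then \emph{defined} by \eqref{E5.2.8}; no fixed-point argument is needed since $Q$ depends on the already constructed $u_{k-1}$.

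The bound \eqref{E5.2.6} is obtained via the equivalent norm in \eqref{E2.2.6}: the block inverse gives $\|w_k\|_{\mathcal{H}^s}\leq E_0\delta^{-1}\|F_k\|_{\mathcal{H}^{s+\zeta}}$ because the weight $\langle n(\alpha)\rangle^{2\zeta}$ produced by $A_\alpha^{-1}$ is absorbed into the shift from $\mathcal{H}^s$ to $\mathcal{H}^{s+\zeta}$. The smoothing $r=\zeta$ of $\tilde R\in\mathcal R^r_0$ and $R_1\in\mathcal R^r_2$ together with the boundedness of $({\rm Id}+\epsilon Q)^*$ on $\mathcal{H}^{s+\zeta}$ controls $\|F_k\|_{\mathcal{H}^{s+\zeta}}$ by $C\epsilon(\|u_{k-1}\|_{\mathcal{H}^s}+\|w_{k-1}\|_{\mathcal{H}^s}+\|f\|_{\mathcal{H}^{s+\zeta}})$; the inductive bounds and $\epsilon\leq\delta^2$ then let one fix $B_1$ so that $\|w_k\|_{\mathcal{H}^s}\leq B_1\epsilon/\delta$. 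Differentiating \eqref{E5.2.7} in $\omega$ and invoking the second half of \eqref{E5.1.8}, the factor $\langle n(\alpha)\rangle^{4\zeta+4}$ is absorbed by the weight deficit between $\mathcal{H}^s$ and $\mathcal{H}^{s-\zeta-2}$, while the extra $\delta^{-1}$ is neutralised by the $\delta$ multiplying $\|\partial_\omega w_k\|_{\mathcal{H}^{s-\zeta-2}}$; the bounds for $u_k$ in \eqref{E203} and \eqref{E5.2.9} follow routinely from \eqref{E5.2.8} and the operator norm of $Q(u_{k-1})$.

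The principal difficulty lies in the geometric difference estimate \eqref{E5.2.10}. I would split $w_k-w_{k-1}$ by block size. On the \emph{new} blocks $2^k\leq\langle n(\alpha)\rangle<2^{k+1}$ only $w_k$ contributes and the elementary inequality $\|\widetilde{\Pi}_\alpha w_k\|_{\mathcal{H}^\sigma}\leq\langle n(\alpha)\rangle^{2(\sigma-s)}\|\widetilde{\Pi}_\alpha w_k\|_{\mathcal{H}^s}$, combined with $s\geq\sigma+\zeta+2$ from \eqref{E5.2.1}, already produces the required factor $2^{-2k\zeta}$. On the \emph{older} blocks I use the resolvent identity
\[
A_\alpha(u_{k-1})^{-1}-A_\alpha(u_{k-2})^{-1}=A_\alpha(u_{k-1})^{-1}\bigl(A_\alpha(u_{k-2})-A_\alpha(u_{k-1})\bigr)A_\alpha(u_{k-2})^{-1}
\]
and the Lipschitz dependence of $A_\alpha$ on $u$ inherited from $V_{\rm D}\in\Sigma^0$, fed by the inductive bound $\|u_{k-1}-u_{k-2}\|_{\mathcal{H}^\sigma}\leq 2B_2(\epsilon/\delta)2^{-2(k-1)\zeta}$; the piece $F_k-F_{k-1}$ is decomposed in the same spirit, and the extra contribution from $\tilde S_k-\tilde S_{k-1}$ yields an independent $2^{-2k\zeta}$ via the same regularity trade-off used for the new blocks. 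Closing the recursion fixes $B_2$ once $B_1$ is chosen. The key obstacle throughout is balancing the loss $\delta^{-1}\langle n(\alpha)\rangle^{2\zeta}$ of every block inversion against the smoothing gain $r=\zeta$ from $\tilde R$ and $R_1$ and the saving $\epsilon\leq\delta^2$, so that the constants $B_1,B_2$ can be taken independent of $k$.
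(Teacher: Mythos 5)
Your high-level outline—induction, block inversion of $A_\alpha$, weight bookkeeping between $\mathcal{H}^s$ and $\mathcal{H}^{s+\zeta}$ against the $\delta^{-1}\langle n(\alpha)\rangle^{2\zeta}$ loss—does follow the paper's strategy, and your treatment of the ``new blocks'' in \eqref{E5.2.10} via the $\langle n(\alpha)\rangle^{2(\sigma-s)}$ decay is exactly the paper's formula \eqref{E6.48}. However, there is a genuine gap in your construction of $w_k$ and it invalidates the claimed uniformity.

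You set $\widetilde{\Pi}_\alpha w_k=A_\alpha^{-1}\widetilde{\Pi}_\alpha F_k$ for $(\omega,\epsilon)\notin\bigcup_{k'=0}^{k}\mathcal{O}_{k'}$, but that leaves $w_k$ undefined on the resonance set, whereas the proposition demands $w_k\in\mathcal{H}^s$, $\partial_\omega w_k\in\mathcal{H}^{s-\zeta-2}$ continuous in $\omega$, with the bound \eqref{E5.2.6} holding \emph{uniformly in $\omega\in[1,2]$}. The paper's definition \eqref{E5.2.28},
\begin{equation*}
w_{k+1}=\sum_{k'=0}^{k+1}\sum_{\alpha\,:\,2^{k'}\le\langle n(\alpha)\rangle<2^{k'+1}}(1-\psi_{k'}(\omega,\epsilon))\,\widetilde{\Pi}_\alpha w_{k+1},
\end{equation*}
multiplies each block by the cut-off $(1-\psi_{k'})$, which vanishes on $G_{k'}$ (killing the singularity) and equals $1$ outside $\mathcal{O}_{k'}$ (so the equation \eqref{E5.2.7} is still solved there). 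You constructed $\psi_k$ via Urysohn but never used it; without it, the proposition's global, uniform statements fail. Moreover, the $\delta\|\partial_\omega w_k\|_{\mathcal{H}^{s-\zeta-2}}\le B_1\epsilon/\delta$ part of \eqref{E5.2.6} specifically requires absorbing the extra term $\partial_\omega\psi_{k'}\widetilde{\Pi}_\alpha w_{k+1}$, controlled via the bound $|\partial_\omega\psi_{k'}|\le C_1\delta^{-1}2^{2k'(\zeta+2)}$ and the weight deficit between $\mathcal{H}^s$ and $\mathcal{H}^{s-\zeta-2}$ (see \eqref{E6.41}); your derivative estimate only accounts for $\partial_\omega$ hitting $A_\alpha^{-1}$ and $F_k$.

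A second, related gap: you prove invertibility of $A_\alpha(\omega;u_{k-1},\epsilon)$ only for $\omega\notin\mathcal{O}_{k''}$, but the cut-off construction requires invertibility on the \emph{larger} set $[1,2]-G_{k'',\epsilon}\supset[1,2]-\mathcal{O}_{k'',\epsilon}$, i.e.\ on the buffer $\mathcal{O}_{k''}\setminus G_{k''}$ as well. That is the content of Lemma~\ref{lemma5.2.1}: away from $\mathcal{O}_{k''}$ the estimate goes as you say via \eqref{E5.1.5} and the telescoping $\|u_{k-1}-u_{k''-1}\|_{\mathcal{H}^\sigma}$, but on the buffer one must in addition use the derivative bound \eqref{E5.1.6} on $\lambda_l^\alpha$ together with the buffer width $\frac{\delta}{128C_0}2^{-2k''(\zeta+2)}$ to conclude $|\lambda^\alpha_l|\ge\delta\langle n(\alpha)\rangle^{-2\zeta}$. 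Your proposal skips this second half of the lower-bound argument. Fixing both points—introducing the $(1-\psi_{k'})$ cut-off in the definition of $w_{k+1}$ and proving the lower bound on the full complement of $G_{k''}$—would bring your argument in line with the paper's proof; the remainder of your estimates for \eqref{E5.2.6}, \eqref{E5.2.10}, \eqref{E203}, \eqref{E5.2.9} are essentially the paper's, modulo the indexing slip in your resolvent identity (it should compare $u_k$ with $u_{k-1}$, not $u_{k-1}$ with $u_{k-2}$).
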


\begin{rema}
For all $\delta_0\in(0,\sqrt{\gamma_0}]$ with $\gamma_0$ small enough, if we assume $\epsilon\leq\delta^2$, where $\delta\in(0,\delta_0)$, then \eqref{E203} implies that
\begin{equation}\label{E5.2.11}
\|u_k\|_{\mathcal{E}^\sigma(\zeta)}<q
\end{equation}
for some constant $q>0$.
\end{rema}
Before the proof of Proposition \ref{proposition5.2.1}, we need to introduce two lemmas.
\begin{lemm}\label{lemma5.2.1}
There is $\delta_0\in(0,1]$ small enough, depending only on the constants $B_1,B_2$, such that for all $k\geq0$, $k'\in\{0,...,k+1\}$, $\epsilon\in[0,\delta^2]$, $\delta\in(0,\delta_0]$, $\alpha\in\mathcal{A}$ with $2^{k'}\leq\langle n(\alpha)\rangle<2^{k'+1}$
\begin{equation*}
[1,2]-G_{k',\epsilon}\subset I(\alpha,u_k,\epsilon,\delta),
\end{equation*}
where $I(\cdot)$ is defined by \eqref{E5.1.7}. When $k=0$, we set $u_{-1}=0$.
\end{lemm}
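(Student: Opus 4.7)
The plan is to fix $k \geq 0$, $k' \in \{0, \ldots, k+1\}$, $\alpha \in \mathcal{A}$ with $2^{k'} \leq \langle n(\alpha)\rangle < 2^{k'+1}$, and take an arbitrary $\omega \in [1,2] \setminus G_{k',\epsilon}$. The goal is to show $|\lambda^\alpha_l(\omega; u_k, \epsilon)| \geq \delta \langle n(\alpha)\rangle^{-2\zeta}$ for every $l \in \{1, \ldots, D_\alpha\}$. The strategy is to first obtain the lower bound $|\lambda^\alpha_l(\omega; u_{k'-1}, \epsilon)| \geq \tfrac{3}{2}\delta\, 2^{-2k'\zeta}$, and then transfer it to the eigenvalues at $u_k$ via the Lipschitz continuity in $u$ from Proposition~\ref{proposition5.1.1}(i), using the telescoping consequence of \eqref{E5.2.9}.

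For the first step I split according to whether $\omega \in \mathcal{O}_{k',\epsilon}$. If $\omega \notin \mathcal{O}_{k',\epsilon}$, then the definition \eqref{E5.2.4} gives directly $|\lambda^\alpha_l(\omega; u_{k'-1}, \epsilon)| \geq 2\delta\, 2^{-2k'\zeta}$ for every $l$. Otherwise $\omega \in \mathcal{O}_{k',\epsilon} \setminus G_{k',\epsilon}$, and by the definition of $G_{k',\epsilon}$ there exists $\omega' \in [1,2] \setminus \mathcal{O}_{k',\epsilon}$ with $|\omega - \omega'| < \frac{\delta}{128 C_0} 2^{-2k'(\zeta+2)}$. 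The frequency-derivative estimate \eqref{E5.1.6} together with $\langle n(\alpha)\rangle^4 < 16 \cdot 2^{4k'}$ yields
\begin{equation*}
|\lambda^\alpha_l(\omega; u_{k'-1}, \epsilon) - \lambda^\alpha_l(\omega'; u_{k'-1}, \epsilon)| \leq 4 C_0 \cdot 16 \cdot 2^{4k'} \cdot \frac{\delta}{128 C_0} \cdot 2^{-2k'(\zeta+2)} = \tfrac{1}{2} \delta\, 2^{-2k'\zeta},
\end{equation*}
so $|\lambda^\alpha_l(\omega; u_{k'-1}, \epsilon)| \geq \tfrac{3}{2} \delta\, 2^{-2k'\zeta}$. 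The roles of the factor $\frac{1}{128 C_0}$ and the exponent $2(\zeta+2)$ in the definition of $G_{k'}$ are precisely to cancel the $4 C_0 \langle n(\alpha)\rangle^4$ gradient bound and leave a clean factor $\tfrac{1}{2}$.

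For the second step, a telescoping of \eqref{E5.2.9} (using $u_{-1}=0$, with an empty sum in the trivial case $k'=k+1$) gives
\begin{equation*}
\|u_k - u_{k'-1}\|_{\mathcal{H}^\sigma} \leq \sum_{j=k'}^{k} 2 B_2 \frac{\epsilon}{\delta}\, 2^{-2j\zeta} \leq \frac{2 B_2}{1 - 2^{-2\zeta}}\, \frac{\epsilon}{\delta}\, 2^{-2k'\zeta}.
\end{equation*}
Proposition~\ref{proposition5.1.1}(i) then produces, for each $l$, some $l'$ with
\begin{equation*}
|\lambda^\alpha_l(\omega; u_k, \epsilon) - \lambda^\alpha_{l'}(\omega; u_{k'-1}, \epsilon)| \leq C_0 \epsilon\, \|u_k - u_{k'-1}\|_{\mathcal{H}^\sigma} \leq C_3\, \delta^3\, 2^{-2k'\zeta},
\end{equation*}
where $C_3 := 2 B_2 C_0 / (1 - 2^{-2\zeta})$ and the hypothesis $\epsilon \leq \delta^2$ has been used. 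Combining with the first step yields
\begin{equation*}
|\lambda^\alpha_l(\omega; u_k, \epsilon)| \geq \tfrac{3}{2} \delta\, 2^{-2k'\zeta} - C_3 \delta^3\, 2^{-2k'\zeta} \geq \delta\, 2^{-2k'\zeta} \geq \delta \langle n(\alpha)\rangle^{-2\zeta},
\end{equation*}
as soon as $\delta_0$ is chosen so that $C_3 \delta_0^2 \leq \tfrac{1}{2}$, which delivers $\omega \in I(\alpha, u_k, \epsilon, \delta)$.

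The main delicate point is the exponent bookkeeping: the decay $2^{-2k'(\zeta+2)}$ in the definition of $G_{k'}$ is calibrated exactly against the growth $\langle n(\alpha)\rangle^4 \leq 16 \cdot 2^{4k'}$ of the eigenvalue frequency-gradient, producing the common $2^{-2k'\zeta}$ scale shared by the definition of $\mathcal{O}_{k'}$, the boundary perturbation, and the telescoping of the $u$-increments; without this calibration the case $\omega \in \mathcal{O}_{k',\epsilon} \setminus G_{k',\epsilon}$ would fail uniformly in $k'$. The hypothesis $\epsilon \leq \delta^2$ is what renders the $u$-perturbation cubic in $\delta$, so that taking $\delta_0$ small enough (depending only on $B_1, B_2, C_0, \zeta$) absorbs it into the $\tfrac{1}{2}\delta$ margin; here $B_1$ enters only through \eqref{E5.2.11}, which must be verified in order to apply Proposition~\ref{proposition5.1.1} to the iterates $u_k$.
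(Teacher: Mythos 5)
Your proof is correct and uses the same ingredients as the paper's -- the eigenvalue Lipschitz bound in $u$ from Proposition~\ref{proposition5.1.1}(i), the $\omega$-derivative bound \eqref{E5.1.6}, the telescoping of \eqref{E5.2.9}, and the calibration of the $2^{-2k'(\zeta+2)}$ factor in $G_{k'}$ against $\langle n(\alpha)\rangle^4 < 16\cdot 2^{4k'}$. The only (cosmetic) difference is the order of the two perturbations: you first move $\omega$ at fixed $u_{k'-1}$ and then move $u_{k'-1}\to u_k$, whereas the paper first moves $u_{k'-1}\to u_k$ at the reference frequency $\tilde\omega\notin\mathcal{O}_{k',\epsilon}$ and then moves $\tilde\omega\to\omega$; both give the same margins.
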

\begin{proof}
We first consider $\omega\in[1,2]-\mathcal{O}_{k',\epsilon}$,
$l\in\{1,...,D_\alpha\}$. Owing to Proposition \ref{proposition5.1.1} {\rm(ii)}, \eqref{E5.2.4} and \eqref{E5.2.9},
setting $(u,u')=(u_k,u_{k'-1})$, there exists $l'\in\{1,...,D_\alpha\}$ such that
\begin{align}
|\lambda^\alpha_l(\omega;u_k,\epsilon)|&\geq|\lambda^\alpha_{l'}(\omega;u_{k'-1},\epsilon)|
-C_0\epsilon\|u_k-u_{k'-1}\|_{\mathcal{H}^\sigma}\nonumber\\
&\geq2\delta2^{-2'\zeta}-2C_0B_2\frac{\epsilon^2}{\delta}\frac{2^{-2k'\zeta}}{1-2^{-2\zeta}}
\geq\frac{3}{2}\delta2^{-2k'\zeta}\geq\frac{3}{2}\delta\langle n(\alpha)\rangle^{-2\zeta},\label{E5.2.14}
\end{align}
when ${\epsilon\leq\delta^2}$ if $\delta\in[0,\delta_0]$ with $\delta_0$ small enough.
Next, let $\omega\in\mathcal{O}_{k',\epsilon}-G_{k',\epsilon}$. The definition on $G_k$ (see\eqref{E5.2.4}) indicates that
\begin{equation}\label{E5.2.13}
 |\omega-\tilde{\omega}|<\frac{\delta}{128C_0}2^{-2k'(\zeta+2)},
\end{equation}
  where $\tilde{\omega}\in[1,2]-\mathcal{O}_{k',\epsilon}$.  By means of formula \eqref{E5.1.6},
  we obtain that for all $u\in\mathcal{E}^\sigma(\zeta)$ with $\|u\|_{\mathcal{E}^\sigma(\zeta)}<q$, $\alpha\in\mathcal{A}$, $l\in\{1,...,D_\alpha\}$
\begin{equation*}
\sup_{\omega\in[1,2]}|\partial_\omega\lambda^\alpha_l(\omega';u,\epsilon)|\leq 4C_0\langle n(\alpha)\rangle^4.
\end{equation*}
It follows from formulae \eqref{E5.2.14}, \eqref{E5.2.13} and the fact of $2^{k'}\leq\langle n(\alpha)\rangle<2^{k'+1}$ that
\begin{equation*}
|\lambda^\alpha_l(\omega;u_k,\epsilon)|\geq|\lambda^\alpha_l(\tilde{\omega};u_k,\epsilon)|-4C_0\langle n(\alpha)\rangle^4|\omega-\tilde{\omega}|{\geq}\delta2^{-2k'\zeta}\geq\delta\langle n(\alpha)\rangle^{-2\zeta}.
\end{equation*}
\end{proof}
In order to using the recurrence method, we shall also need to give the upper bound of the right-hand side  of equation \eqref{E5.2.7} at  $k+1$-th step. Denote
\begin{align}\label{E5.2.16}
H_{k+1}(u_k,w_k)=
&\tilde{S}_{k+1}~{({\rm Id}+\epsilon Q(u_{k},\omega,\epsilon))^*}\tilde{R}(u_k,\omega,\epsilon)u_{k}\nonumber\\
&+\tilde{S}_{k+1}(R_1(u_{k},\omega,\epsilon)w_k)
+\epsilon\tilde{S}_{k+1}{({\rm Id}+\epsilon Q(u_{k},\omega,\epsilon))^*}f.
\end{align}
\begin{lemm}\label{lemma5.2.2}
There exists  a constant $C > 0$, depending on $q$ in \eqref{E5.2.11} but independent of $k$,
such that for any $\omega\in[1, 2]$, any $\epsilon\in[0,\delta^2]$, any $\delta\in[0,\delta_0]$, the following holds:
\begin{align}
\|H_{k+1}(u_k,w_k)\|_{\mathcal{H}^{s+\zeta}}\leq
C(\|u_k&(\cdot,\omega,\epsilon)\|_{\mathcal{H}^{s}}+
\|w_k(\cdot,\omega,\epsilon)\|_{\mathcal{H}^{s}})
+(1+C\epsilon)\|f\|_{\mathcal{H}^{s+\zeta}}, \label{E5.2.17}\\
\|\partial_\omega H_{k+1}(u_k,w_k)\|_{\mathcal{H}^{s-2}}\leq
C(\|u_k&(\cdot,\omega,\epsilon)\|_{\mathcal{H}^{s}}
+\|\partial_\omega u_k(\cdot,\omega,\epsilon)\|_{\mathcal{H}^{s-\zeta-2}}\nonumber\\
+\|w_k&(\cdot,\omega,\epsilon)\|_{\mathcal{H}^{s}}
+\|\partial_\omega w_k(\cdot,\omega,\epsilon)\|_{\mathcal{H}^{s-\zeta-2}}
+\epsilon\|f\|_{\mathcal{H}^{s-2}}), \label{E5.2.18}\\
\|H_{k+1}(u_k,w_k)-H_{k}(u_{k-1}
,w_{k-1})
\|_{\mathcal{H}^{\sigma+\zeta}}
\leq&
C(\|u_k-u_{k-1}\|_{\mathcal{H}^{\sigma}}+\|w_k-w_{k-1}\|_{\mathcal{H}^{\sigma}})\nonumber\\
&+2^{-2(k+1)\zeta}C\left(\|u_k\|_{\mathcal{H}^{\sigma+\zeta}}+\|w_k\|_{\mathcal{H}^{\sigma+\zeta}}\right)\nonumber\\
&+2^{-2(k+1)\zeta}(1+C\epsilon)\|f\|_{\mathcal{H}^{\sigma+2\zeta}}. \label{E5.2.19}
\end{align}
\end{lemm}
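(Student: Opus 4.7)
The plan is to derive all three estimates by systematically exploiting the smoothing properties of the operators $\tilde R\in\mathcal R^r_0$ and $R_1\in\mathcal R^r_2$ (with $r=\zeta$ from \eqref{E5.2.1}), together with the fact that each $Q_j$ sits in a class $\mathcal L^{-j\rho'}_{\rho'}\subset\Sigma^{0}$ so that $({\rm Id}+\epsilon Q)^*$ is bounded on every $\mathcal H^s$ with operator norm $1+O(\epsilon)$, and that the spectral truncator $\tilde S_{k+1}$ is an orthogonal projector on $\mathcal H^\sigma$ of norm $1$ uniformly in $k$.

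For the first estimate \eqref{E5.2.17}, I apply these three facts to each of the three summands of \eqref{E5.2.16}. The smoothing properties give $\tilde R(u_k)u_k\in\mathcal H^{s+\zeta}$ and $R_1(u_k)w_k\in\mathcal H^{s+\zeta}$ with norms controlled by $C\|u_k\|_{\mathcal H^s}$ and $C\|w_k\|_{\mathcal H^s}$ (the hypothesis $s\geq\sigma_0+2$ required by $\mathcal R^r_2$ is ensured by \eqref{E5.2.1}), while composition with $({\rm Id}+\epsilon Q)^*$ and $\tilde S_{k+1}$ alters norms only by a factor $1+C\epsilon$. The last summand contributes the $(1+C\epsilon)\|f\|_{\mathcal H^{s+\zeta}}$ piece.

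For the derivative estimate \eqref{E5.2.18}, I differentiate in $\omega$ by the chain rule. Each of $\tilde R, R_1, Q$ is $C^1$ in $(\omega,\epsilon)$ and its $\omega$-derivative belongs to the same class of operators, so terms in which $\partial_\omega$ hits an operator symbol contribute $C(\|u_k\|_{\mathcal H^s}+\|w_k\|_{\mathcal H^s}+\epsilon\|f\|_{\mathcal H^{s-2}})$. The remaining terms involve $\partial_\omega u_k\in\mathcal H^{s-\zeta-2}$ and $\partial_\omega w_k\in\mathcal H^{s-\zeta-2}$; applying the smoothing operators $\tilde R$ or $R_1$ (which gain $\zeta$ derivatives) to these brings the result back to $\mathcal H^{s-2}$, producing precisely the right-hand side of \eqref{E5.2.18}.

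The difference estimate \eqref{E5.2.19} is the most delicate step and I expect it to be the main obstacle. I would split
\[
H_{k+1}(u_k,w_k)-H_k(u_{k-1},w_{k-1})
=(\tilde S_{k+1}-\tilde S_k)\Theta(u_k,w_k)+\tilde S_k\bigl[\Theta(u_k,w_k)-\Theta(u_{k-1},w_{k-1})\bigr],
\]
where $\Theta$ denotes the bracketed combination of $({\rm Id}+\epsilon Q)^*\tilde R\,u+R_1 w+\epsilon({\rm Id}+\epsilon Q)^*f$. The high-frequency piece is handled through the Bernstein-type inequality
\[
\|(\tilde S_{k+1}-\tilde S_k)v\|_{\mathcal H^{\sigma+\zeta}}\leq C\,2^{-2(k+1)\zeta}\|v\|_{\mathcal H^{\sigma+2\zeta}},
\]
a direct consequence of the norm equivalence \eqref{E2.2.6} and the fact that $\tilde S_{k+1}-\tilde S_k$ is supported in frequencies $\langle n(\alpha)\rangle\geq 2^{k+1}$; combined with the first estimate applied at regularity $\sigma+2\zeta$, this yields the two $2^{-2(k+1)\zeta}$-prefactor terms of \eqref{E5.2.19}. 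For the Lipschitz piece I telescope over the differences $u_k-u_{k-1}$, $w_k-w_{k-1}$, together with $Q(u_k)-Q(u_{k-1})$, $\tilde R(u_k)-\tilde R(u_{k-1})$ and $R_1(u_k)-R_1(u_{k-1})$; each symbol difference is bounded by $C\|u_k-u_{k-1}\|_{\mathcal H^\sigma}$ via Definitions \ref{definition3.1.1} and \ref{definition3.1.2}, which provide smooth (hence locally Lipschitz) dependence on $u$ on the bounded set $\{\|u\|_{\mathcal E^\sigma_\zeta}<q\}$ granted by \eqref{E5.2.11}. Collecting the two pieces produces \eqref{E5.2.19}, with the constant $C$ depending only on $q$.
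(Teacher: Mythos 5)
Your proposal is correct and follows essentially the same route as the paper: boundedness of $\tilde R$, $R_1$ on the smoothing scale plus uniform boundedness of $({\rm Id}+\epsilon Q)^*$ and $\tilde S_{k+1}$ for \eqref{E5.2.17}; the Leibniz/chain rule with the containment $\mathcal H^{s-\zeta-2}\subset\mathcal H^\sigma$ from \eqref{E5.2.1} for \eqref{E5.2.18}; and, for \eqref{E5.2.19}, the split into a high-frequency piece $(\tilde S_{k+1}-\tilde S_k)\Theta(u_k,w_k)$ handled by the Bernstein-type gain $\|(\tilde S_{k+1}-\tilde S_k)v\|_{\mathcal H^{\sigma+\zeta}}\le 2^{-2(k+1)\zeta}\|v\|_{\mathcal H^{\sigma+2\zeta}}$ plus a telescoped Lipschitz piece. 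The paper merely records this last telescoping more explicitly, grouping the terms into three displays (\eqref{E5.2.21}--\eqref{E5.2.23}) with the operator differences $\tilde R(u_k)-\tilde R(u_{k-1})$, $R_1(u_k)-R_1(u_{k-1})$, $Q(u_k)^*-Q(u_{k-1})^*$ treated separately from the argument differences $u_k-u_{k-1}$, $w_k-w_{k-1}$; your two-piece decomposition contains exactly the same ingredients.
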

\begin{proof}
Let $u_k$ satisfy \eqref{E5.2.11}. It follows from
Definition \ref{definition3.1.2} and \eqref{E5.2.1} that $\tilde{R},R_1$ are bounded from $\mathcal{H}^{s}$ to $\mathcal{H}^{s+\zeta}$ with
$s\in\mathbb{R}$. Moreover, Lemma \ref{lemma3.1.1} shows that ${Q(u_k,\omega,\epsilon)^*}$ is bounded on space $\mathcal{H}^{s}$ with $s\in\mathbb{R}$,  which yields \eqref{E5.2.17}.

The term in \eqref{E5.2.16} implies that we have to give  the upper bound of the following terms
\begin{subequations}
\begin{numcases}{}
\label{E5.2.20a}
\partial_\omega\big(Q(u_{k},\omega,\epsilon)\big)=\partial_uQ(\cdot,\omega,\epsilon)
\cdot(\partial_\omega u_k)+\partial_\omega Q(u_{k},\omega,\epsilon),\\
\label{E5.2.20b}
\partial_\omega\big(\tilde{R}(u_{k},\omega,\epsilon)\big)=\partial_u\tilde{R}(\cdot,\omega,\epsilon)
\cdot(\partial_\omega u_k)+\partial_\omega \tilde{R}(u_{k},\omega,\epsilon),\\
\label{E5.2.20c}
\partial_\omega\big(R_1(u_{k},\omega,\epsilon)\big)=\partial_uR_1(\cdot,\omega,\epsilon)
\cdot(\partial_\omega u_k)+\partial_\omega R_1(u_{k},\omega,\epsilon).
\end{numcases}
\end{subequations}
 The assumption on $s$ in \eqref{E5.2.1} shows $\mathcal{H}^{s-\zeta-2}\subset\mathcal{H}^\sigma$. Formulae \eqref{E3.1.2} and \eqref{E5.2.11} read that \eqref{E5.2.20a} is bounded on any space $\mathcal{H}^{s}$. Similarly, we see also that \eqref{E5.2.20b}, \eqref{E5.2.20c} are bounded from $\mathcal{H}^s$ to $\mathcal{H}^{s+\zeta}$.
This completes the proof of \eqref{E5.2.18}.

Let us write the difference of  $H_{k+1}(u_k,w_k)-H_{k}(u_{k-1},w_{k-1})$ as the sum of the following three parts:
\begin{align}\label{E5.2.21}
\begin{cases}
(\tilde{S}_{k+1}-\tilde{S}_{k}){({\rm Id}+\epsilon Q(u_{k},\omega,\epsilon))^*}\tilde{R}(u_k,\omega,\epsilon)u_{k},\\
(\tilde{S}_{k+1}-\tilde{S}_{k})R_1(u_{k},\omega,\epsilon)w_k,\\
(\tilde{S}_{k+1}-\tilde{S}_{k})~{({\rm Id}+\epsilon Q(u_{k},\omega,\epsilon))^*}{f},
\end{cases}
\end{align}
and
\begin{align}\label{E5.2.22}
\begin{cases}
\epsilon\tilde{S}_{k}({Q(u_{k},\omega,\epsilon)^*}-{Q(u_{k-1},\omega,\epsilon)^*})
\tilde{R}(u_k,\omega,\epsilon)u_{k},\\
\tilde{S}_{k}{({\rm Id}+\epsilon Q(u_{k-1},\omega,\epsilon))^*}(\tilde{R}(u_{k},\omega,\epsilon)-\tilde{R}(u_{k-1},\omega,\epsilon))u_k,\\
\tilde{S}_{k}(R_1(u_{k},\omega,\epsilon)-R_1(u_{k-1},\omega,\epsilon))w_k,\\
\epsilon\tilde{S}_{k}({Q(u_{k},\omega,\epsilon)^*}-{Q(u_{k-1},\omega,\epsilon)^*}){f},
\end{cases}
\end{align}
and
\begin{align}\label{E5.2.23}
\begin{cases}
\tilde{S}_{k}{({\rm Id}+\epsilon Q(u_{k-1},\omega,\epsilon))^*}\tilde{R}(u_{k-1},\omega,\epsilon)(u_{k}-u_{k-1}),\\
\tilde{S}_{k}R_1(u_k,\omega,\epsilon)(w_{k}-w_{k-1}).
\end{cases}
\end{align}
Formulae \eqref{E5.2.6} and \eqref{E203} lead to that $u_k,w_k$ are in a bounded subset of $\mathcal{H}^\sigma$.
This establishes that $\tilde{R}, R_1$ are bounded operators from $\mathcal{H}^{\sigma+\zeta}$ to $\mathcal{H}^{\sigma+2\zeta}$ with $\sigma\in\mathbb{R}$. Owing to \eqref{E5.2.3}, we establish that the $\mathcal{H}^{\sigma+\zeta}$-norm of \eqref{E5.2.21} is bounded from above by
\begin{equation*}
2^{-2(k+1)\zeta}C(\|u_k\|_{\mathcal{H}^{\sigma+\zeta}}+\|w_k\|_{\mathcal{H}^{\sigma+\zeta}})+2^{-2(k+1)\zeta}(1+C\epsilon)\|f\|_{\mathcal{H}^{\sigma+2\zeta}}.
\end{equation*}
 It follows from  \eqref{E3.1.3} and \eqref{E3.1.2} that there exists a constant $C$ such that
\begin{align*}
&\|\tilde{R}(u_k,\omega,\epsilon)-\tilde{R}(u_{k-1},\omega,\epsilon)\|_{\mathcal{L}(\mathcal{H}^\sigma,\mathcal{H}^{\sigma+\zeta})}\leq C\|u_k-u_{k-1}\|_{\mathcal{H}^\sigma},\\  &\|R_1(u_k,\omega,\epsilon)-R_1(u_{k-1},\omega,\epsilon)\|_{\mathcal{L}(\mathcal{H}^\sigma,\mathcal{H}^{\sigma+\zeta})}\leq C\|u_k-u_{k-1}\|_{\mathcal{H}^\sigma},\\
&\|{Q(u_k,\omega,\epsilon)^*}-{Q(u_{k-1},\omega,\epsilon)^*}\|_{\mathcal{L}(\mathcal{H}^{\sigma+\zeta},\mathcal{H}^{\sigma+\zeta})}\leq C\|u_k-u_{k-1}\|_{\mathcal{H}^\sigma}.
\end{align*}
Since ${Q(u_k,\omega,\epsilon)^*}$ is bounded on any space $\mathcal{H}^{\sigma}$ with $\sigma \in\mathbb{R}$, the $\mathcal{H}^{\sigma+\zeta}$-norm of \eqref{E5.2.22} is bounded from above by $C\|u_k-u_{k-1}\|_{\mathcal{H}^{\sigma}}+2^{-2(k+1)\zeta}C\epsilon \|f\|_{\mathcal{H}^{\sigma+2\zeta}}$.
It is easy to show that $\mathcal{H}^{\sigma+\zeta}$-norm of \eqref{E5.2.23} is bounded from above by $C(\|u_k-u_{k-1}\|_{\mathcal{H}^{\sigma}}+\|w_k-w_{k-1}\|_{\mathcal{H}^{\sigma}})$. Thus we get \eqref{E5.2.19}.
\end{proof}
Let us complete the proof of Proposition \ref{proposition5.2.1}.
\begin{proof}
We apply a recursive argument to  Proposition \ref{proposition5.2.1}.
We have already defined $(G_0,\mathcal{O}_0,\psi_0,u_0,w_0)$ satisfying \eqref{E5.2.4}-\eqref{E5.2.9}.
Suppose that $(G_k,\mathcal{O}_k,\psi_k,u_k,w_k)$ have been constructed satisfying \eqref{E5.2.4}- \eqref{E5.2.9}.
Now let us construct these datum at  $k + 1$-th step and verify that these datum  at $k + 1$-th step still satisfy \eqref{E5.2.4}-\eqref{E5.2.9}. When $u_k$ is given, the sets $\mathcal{O}_{k+1},G_{k+1}$ are defined by \eqref{E5.2.4} at $k+1$-th step. For fixed $\epsilon$, $G_{k+1,\epsilon}$ is a compact subset of the open set $\mathcal{O}_{k+1,\epsilon}$, where $G_{k+1,\epsilon}$ has to satisfy the distance between $G_{k+1,\epsilon}$ and the complement of $\mathcal{O}_{k+1,\epsilon}$ is bounded from below $\frac{\delta}{128C_0}2^{-2(k+1)(\zeta+2)}$. It is easy to  construct a function $\psi_{k+1}$ satisfying \eqref{E5.2.5} at the  $k+1$-th step applying Urysohn's lemma.

For $(\omega,\epsilon)\in[1,2]\times[0,\delta^2]-\cup^{k+1}_{k'=0}G_{k'}$,
let us construct $w_{k+1}$.
By construction, the operator $V_{\rm D}(u_k,\omega,\epsilon)$ is  a block-diagonal operator, which implies
\begin{equation*}
({L}_\omega+\epsilon V_{\rm D}(u_{k},\omega,\epsilon))\widetilde{{\Pi}}_\alpha w_{k+1}=\widetilde{{\Pi}}_\alpha({L}_\omega+\epsilon V_{\rm D}(u_{k},\omega,\epsilon))w_{k+1}.
\end{equation*}
Combining this with formula \eqref{E5.2.16}, we obtain that Eq. \eqref{E5.2.7} at the  $k+1$-th step can be written as
\begin{equation}\label{E5.2.24}
({L}_\omega+\epsilon V_{\rm D}(u_{k},\omega,\epsilon))\widetilde{{\Pi}}_\alpha w_{k+1}=\epsilon\widetilde{{\Pi}}_\alpha H_{k+1}(u_k,w_k),\alpha\in\mathcal{A}.
\end{equation}
Remark that  the right-hand side of \eqref{E5.2.24} vanishes when $\langle n(\alpha)\rangle\geq2^{k+2}$ by \eqref{E5.2.3}.
Let $k'\in\{0,...,k+1\}$, $n\in \mathbb{N}$ with
$2^{k'}\leq\langle n(\alpha)\rangle<2^{k'+1}$, $\omega\in[1,2]-G_{k',\epsilon}$.
It follows from Lemma \ref{lemma5.2.1}, Proposition \ref{proposition5.1.1} {\rm (iii)} that Eq. \eqref{E5.2.24} may be simplified as
\begin{equation}\label{E5.2.25}
\widetilde{{\Pi}}_\alpha w_{k+1}=\epsilon A_\alpha(\omega;u_k,\epsilon)^{-1}\widetilde{{\Pi}}_\alpha H_{k+1}(u_k,w_k).
\end{equation}
Then $w_{k+1}$ is defined as
\begin{equation}\label{E5.2.28}
w_{k+1}(t,x,\omega,\epsilon)=:\sum_{k'=0}^{k+1}\sum\limits_{{\alpha\in\mathcal{A}},{2^{k'}\leq\langle n(\alpha)\rangle<2^{k'+1}}}(
1-\psi_{k'}(\omega,\epsilon))\widetilde{\Pi}_\alpha w_{k+1}(t,x,\omega,\epsilon)
\end{equation}
for all  $(\omega,\epsilon)\in[1,2]\times[0,\delta^2]$. Let us firstly verify that \eqref{E5.2.6} holds at the  $k+1$-th step.
Formulae \eqref{E5.1.8} and \eqref{E5.2.25} deduce that for all $k'\in\{0,...,k+1\}$,
$\alpha\in \mathcal{A}$ with $2^{k'}\leq\langle n(\alpha)\rangle<2^{k'+1}$, $(\omega,\epsilon)\in[1,2]\times[0,\delta^2]-G_{k'}$
\begin{equation}\label{E5.2.26}
\|\widetilde{{\Pi}}_\alpha w_{k+1}(\cdot,\omega,\epsilon)\|_{\mathcal{H}^s}\leq E_0\frac{\epsilon}{\delta}\|\widetilde{{\Pi}}_\alpha H_{k+1}(u_k,w_k)(\cdot,\omega,\epsilon)\|
_{\mathcal{H}^{s+\zeta}},
\end{equation}
and
\begin{align}\label{E5.2.27}
\|\widetilde{{\Pi}}_\alpha\partial_\omega w_{k+1}(\cdot,\omega,\epsilon)\|_{\mathcal{H}^{s-\zeta-2}}\leq&
E_0\frac{\epsilon}{\delta}\|\widetilde{{\Pi}}_\alpha\partial_\omega H_{k+1}(u_k,w_k)(\cdot,\omega,\epsilon)\|
_{\mathcal{H}^{s-2}}\nonumber\\
&+E_0\frac{\epsilon}{\delta^2}\|\widetilde{{\Pi}}_\alpha H_{k+1}(u_k,w_k)(\cdot,\omega,\epsilon)\|
_{\mathcal{H}^{s+\zeta}}.
\end{align}
Furthermore formula \eqref{E5.2.5} gives that
\begin{equation}\label{E6.41}
\|\partial_\omega\psi_{k'}\widetilde{{\Pi}}_\alpha w_{k+1}\|_{\mathcal{H}^{s-\zeta-2}}
\leq\frac{C_1}{\delta}\|\widetilde{{\Pi}}_\alpha w_{k+1}\|_{\mathcal{H}^s}.
\end{equation}
From \eqref{E5.2.1}, \eqref{E5.2.11}, \eqref{E5.2.6},  \eqref{E203}, \eqref{E5.2.17}, \eqref{E5.2.18}, and \eqref{E5.2.28}-
\eqref{E6.41}, it yields that
\begin{align*}
\|w_{k+1}(\cdot,\omega,\epsilon)\|_{\mathcal{H}^s}\leq
&E_0\frac{\epsilon}{\delta}\left(C\frac{\epsilon}{\delta}(B_1+B_2)+(1+C\epsilon)
\|f\|_{\mathcal{H}^{s+\zeta}}\right),\\
\|\partial_\omega w_{k+1}(\cdot,\omega,\epsilon)\|_{\mathcal{H}^{s-\zeta-2}}\leq
&
 E_0\frac{\epsilon}{\delta}\left(C\frac{\epsilon}{\delta^2}(B_1+B_2)+C\epsilon\|f\|
_{\mathcal{H}^{s-2}}\right)\\
&+E_0\frac{\epsilon}{\delta^2}\left(C\frac{\epsilon}{\delta}(B_1+B_2)
+(1+C\epsilon)\|f\|
_{\mathcal{H}^{s+\zeta}}\right)\\
&+E_0C_1\frac{\epsilon}{\delta^2}\left(\frac{\epsilon}{\delta}C(B_1+B_2)+(1+C\epsilon)\|f\|
_{\mathcal{H}^{s+\zeta}}\right).
\end{align*}
Notice that $C$ depends only on $q,E_0, C_1$, where $q$ is given by \eqref{E5.2.11},
$E_0, C_1$ are uniform constants. If $\epsilon\leq\delta^2\leq\delta^2_0$ with $\delta_0$ small enough,
when $B_1$ is taken large enough corresponding  to $E_0,C_1$ and $\|f\|_{\mathcal{H}^{s+\zeta}}$, then
we have that \eqref{E5.2.6} still holds at the  $k + 1$-th step.
Furthermore, using that ${Q(u_k,\omega,\epsilon)}$ is bounded on any space $\mathcal{H}^{s}$ with $s\in\mathbb{R}$, we derive
\begin{align*}
\|u_{k+1}(\cdot,\omega,\epsilon)\|_{\mathcal{H}^{s}}&+\delta\|\partial_\omega u_{k+1}(\cdot,\omega,\epsilon)\|_{\mathcal{H}^{s-\zeta-2}}
\leq(1+C\epsilon+C\epsilon\delta)
(\|w_{k}\|_{\mathcal{H}^{s}}+\delta\|\partial_\omega w_{k+1}\|_{\mathcal{H}^{s-\zeta-2}}).
\end{align*}
When $\delta_0$ is small enough, if we take $B_2 = 2B_1$, then \eqref{E203} holds the  $k + 1$-th step.

Next, we check \eqref{E5.2.10} still holds  at the  $k + 1$-th step. It is straightforward to obtain that
 \begin{align}
(W_{k+1}-W_{k})(t,x,\omega,\epsilon)=&\sum\limits_{k'=0}^k\sum\limits_{\alpha\in\mathcal{A},2^{k'}\leq\langle n(\alpha)\rangle<2^{k'+1}}
(1-\varphi_{k'})\widetilde{\Pi}_\alpha(W_{k+1}-W_{k})\nonumber\\
&+\sum\limits_{\alpha\in\mathcal{A},2^{k+1}\leq\langle n(\alpha)\rangle<2^{k+2}}
(1-\varphi_{k+1})\widetilde{\Pi}_\alpha W_{k+1}\label{E5.2.20}
\end{align}
due to \eqref{E5.2.28}. For all $k'\in\{0,...,k+1\}$,  $(\omega,\epsilon)\in[1,2]\times[0,\delta^2]-G_{k'}$, $\alpha \in\mathcal{A}$ with $2^{k'}\leq\langle n(\alpha)\rangle<2^{k'+1}$, formulae \eqref{E5.2.25} and \eqref{E5.1.8} give the upper bound
\begin{align}
\|\widetilde{{\Pi}}_\alpha(w_{k+1}-w_{k})\|_{\mathcal{H}^\sigma}{\leq}
&E_0\frac{\epsilon}{\delta}(\|\widetilde{{\Pi}}_\alpha
(V_{\rm D}(u_{k-1},\omega,\epsilon)-V_{\rm D}(u_{k},\omega,\epsilon))w_k\|_{\mathcal{H}^{\sigma+\zeta}}\nonumber\\
&+\|\widetilde{{\Pi}}_\alpha(H_{k+1}(u_k,w_k)-H_{k}(u_{k-1},w_{k-1}))\|_{\mathcal{H}^{\sigma+\zeta}}). \label{E5.2.32}
\end{align}
Furthermore, by formula \eqref{E3.1.2}, it infers that for $s\geq\sigma+\zeta$
\begin{equation}\label{E6.47}
\|(V_{\rm D}(u_{k-1},\omega,\epsilon)-V_{\rm D}(u_{k},\omega,\epsilon))
w_k\|_{\mathcal{H}^{\sigma+\zeta}}\leq C\|u_k-u_{k-1}\|_{\mathcal{H}^{\sigma}}\|w_k\|_{\mathcal{H}^{s}}.
\end{equation}
Applying \eqref{E5.2.1} and \eqref{E5.2.6}, there exist some universal constants $C_3$ such that
\begin{equation}\label{E6.48}
\Big\|\sum\limits_{\alpha\in\mathcal{A},{2^{k+1}\leq\langle n(\alpha)\rangle<2^{k+2}}}
(1-\varphi_{k+1}){\Pi}_n w_{k+1}\Big\|_{\mathcal{H}^\sigma}\leq C_32^{-2(k+1)(s-\sigma)}\|w_{k+1}\|_{\mathcal{H}^s}
\leq C_3B_1\frac{\epsilon}{\delta}2^{-2((k+1)s-\sigma)}.
\end{equation}
Owing to  \eqref{E5.2.19},\eqref{E5.2.1}, \eqref{E5.2.11}, \eqref{E5.2.6},  \eqref{E5.2.10}, \eqref{E203}, \eqref{E5.2.9}, \eqref{E5.2.20}-
 \eqref{E6.48}, it yields that for $s\geq\sigma+\zeta$
\begin{align*}
\|w_{k+1}-w_{k}\|_{\mathcal{H}^{\sigma}}
\leq& E_0\frac{\epsilon}{\delta}\big(2CB_1B_2\frac{\epsilon^2}{\delta^2}2^{-2(k+1)\zeta}
+3CB_2\frac{\epsilon}{\delta}2^{-2(k+1)\zeta}+C\frac{\epsilon}{\delta}(B_1+B_2)2^{-2(k+1)\zeta}\\
&+(1+C\epsilon)\|f\|_{\mathcal{H}^{\sigma+2\zeta}}2^{-2(k+1)\zeta}\big)
+C_2B_1\frac{\epsilon}{\delta}2^{-2(k+1)(s-\sigma)}.
\end{align*}
We have $\|f\|_{\mathcal{H}^{\sigma+2\zeta}}\leq \|f\|_{\mathcal{H}^{s+\zeta}}$
from $s\geq\sigma+\zeta$. If $0\leq\epsilon\leq\delta^2\leq\delta^2_0$ with $\delta_0$ small enough,
when $B_1$ is chosen large enough relatively to $E_0,\|f\|_{\mathcal{H}^{s+\zeta}}$,
and $B_2$ is taken large enough corresponding to $B_1,C_2$, then \eqref{E5.2.10} is obtained for $s\geq\sigma+\zeta$ holds the  $k + 1$-th step.
It is clear to verify that \eqref{E5.2.9} still holds at the  $k+1$-th step by the definition  \eqref{E5.2.8}.
 This concludes the proof of the proposition.
\end{proof}
Our aim is to construct the solution of \eqref{E3.3.17}. Therefore we consider the equation about  $u_k$.
According to \eqref{E5.2.8}, Proposition \ref{proposition4.2.2}, \eqref{E5.2.11} and \eqref{E5.2.7},
it follows that for any $(\omega,\epsilon)\in[1,2]\times[0,\delta^2]-\cup^k_{k'=0}\mathcal{O}_{k'}
,\delta\in(0,\delta_0]$
\begin{align}
({L}_\omega
&+\epsilon V(u_{k-1},\omega,\epsilon))u_k=
\epsilon({({\rm Id}+\epsilon Q(u_{k-1},\omega,\epsilon))^*})^{-1}\big(\tilde{S}_k{({\rm Id}+
\epsilon Q(u_{k-1},\omega,\epsilon))^*}\tilde{R}(u_{k-1},\omega,\epsilon)u_{k-1}\nonumber\\
&+\tilde{S}_k(R_1(u_{k-1},\omega,\epsilon)w_{k-1})
+(\tilde{S}_k{({\rm Id}+\epsilon Q(u_{k-1},\omega,\epsilon))^*}{f}+R_1(u_{k-1},\omega,\epsilon)w_k)\big).\label{E5.2.12}
\end{align}

Finally, let us complete the proof of Theorem \ref{theorem2.1.1}.
\begin{proof}
 Formulae \eqref{E5.2.8} and \eqref{E5.2.9} indicate that the sequence $u_k$ is well defined and converges to $u$ in $\mathcal{H}^{\sigma}$  with
\begin{equation*}
\|u(\cdot,\omega,\epsilon)\|_{\mathcal{H}^{s}}+\delta\|\partial_\omega u(\cdot,\omega,\epsilon)\|_{\mathcal{H}^{s-\zeta-2}}\leq B_2\frac{\epsilon}{\delta}.
\end{equation*}
Moreover, by \eqref{E5.2.6}, \eqref{E5.2.10}, the sequence $w_k$ converges in $\mathcal{H}^{\sigma}$ to $w$, which satisfies
\begin{equation*}
\|w(\cdot,\omega,\epsilon)\|_{\mathcal{H}^{s}}+\delta\|\partial_\omega w(\cdot,\omega,\epsilon)\|_{\mathcal{H}^{s-\zeta-2}}\leq B_1{\textstyle\frac{\epsilon}{\delta}}.
\end{equation*}
If $(\omega,\epsilon)$ is in $[1,2]\times[0,\delta^2]-\bigcup^{+\infty}_{k'=0}G_{k'}$,
$\delta\in(0,\delta_0]$ with $\delta_0$ small enough, then equation \eqref{E5.2.12} is satisfied for any $k\in \mathbb{N}$.
Therefore $u$ satisfies
\begin{equation*}
({L}_\omega+\epsilon V(u,\omega,\epsilon))u=\epsilon \tilde{R}(u,\omega,\epsilon)u+\epsilon {f}
\end{equation*}
as $k\rightarrow+\infty$. This shows that $u$ is a solution of Eq. \eqref{E3.3.17}. By Proposition \ref{proposition3.3.1}, Eq. \eqref{E3.3.17} is equivalent to Eq. \eqref{E3.3.4} which is also equivalent to \eqref{E3.2.11} by Proposition \ref{proposition3.2.1}. Thus we may get a solution satisfying the conditions of Theorem \ref{theorem2.1.1}. Let $\mathcal{O}=\bigcup^{+\infty}_{k'=0}\mathcal{O}_{k'}$.
For $\omega,\omega'\in\mathcal{O}_{k',\epsilon}$, using \eqref{E5.1.6} and \eqref{E5.2.4}, we may obtain the bound
\begin{equation*}
|\omega-\omega'|\stackrel{\theta\in(0,1)}{=}\frac{|\lambda_n^l(\omega;u_{k'},\epsilon)-\lambda_n^l(\omega';u_{k'},\epsilon)|}
{|\partial_\omega\lambda_n^l(\theta\omega'+(1-\theta)\omega;u_{k'},\epsilon)|}\leq C2^{-2(2+\zeta)k'}\delta.
\end{equation*}
Moreover, from  $\langle n(\alpha)\rangle<2^{k'+1}$ and definition of $\widetilde{\Pi}_\alpha$, we deduce that $D_\alpha\leq C_12^{(k'+1)(\beta d+2)}$ with $\alpha\in\Omega_{\alpha}$. Thus the upper bound of $\omega$-measure of the $\epsilon$-section of $\mathcal{O}$ is
\begin{equation*}
C\delta\sum_{k'=0}^{+\infty}2^{-2(2+\zeta)k'+2{(k'+1)(\beta d+2)}+(k'+1)d}.
\end{equation*}	
The series converges if we take $\zeta>\beta d+{d}/{2}+2$. This implies that we obtain the bound $O(\delta)$, which gives the proof of \eqref{E2.1.4}.	
\end{proof}


\end{document}